\begin{document}

\title[Alg. structures in comodule categories over WBAs]
{Algebraic structures in comodule categories over weak bialgebras}

\author{Chelsea Walton, Elizabeth Wicks, and Robert Won}

\address{Walton: Department of Mathematics, Rice University, Houston, TX 77005, USA}

\email{notlaw@rice.edu}

\address{Wicks: Microsoft Corporation, Redmond, WA 98052, USA} 
\email{elizabeth.wicks@microsoft.com}

 \address{Won: Department of Mathematics, The George Washington University, Washington, Dc 20009,~USA} 
\email{robertwon@gwu.edu}

\begin{abstract} 
For a bialgebra $L$ coacting on a $\kk$-algebra $A$, a classical result states $A$ is a right $L$-comodule algebra if and only if $A$ is an algebra in the category $\mathcal{M}^{L}$ of right $L$-comodules. We generalize this to the setting of weak bialgebras $H$. We prove that there is an isomorphism between the categories of right $H$-comodule algebras and the category of algebras in $\mathcal{M}^{H}$. We also recall and introduce the formulaic notion of $H$ coacting on a $\kk$-coalgebra and on a Frobenius $\kk$-algebra, respectively, and prove analogous category isomorphisms. Many examples are provided throughout.
\end{abstract}

\subjclass[2020]{16T05, 18M05, 16T15}
\keywords{
corepresentation category, 
quantum transformation groupoid, weak bialgebra, weak Hopf algebra}

\maketitle



\section{Introduction} \label{sec:intro}

This work is a study of quantum symmetry in the context of weak bialgebra and weak Hopf algebra (co)actions on algebraic structures, particularly with an aim of attracting those new to the subject.

\smallskip

Weak Hopf algebras and weak bialgebras  were introduced by B{\"o}hm, Nill, and Szlach{\'a}nyi  as generalizations of Hopf algebras and bialgebras over a field $\kk$ \cite{BNS, BoSz, Nill, Sz3}; these are sometimes referred to as quantum (semi)groupoids. A weak bialgebra is a $\kk$-vector space with algebra and coalgebra structures satisfying weaker compatibility conditions than those of a bialgebra [\cref{def:wba}], and a weak Hopf algebra is a weak bialgebra which admits an antipode [\cref{def:wha}]. The definitions of these structures were inspired by symmetry problems in quantum field theory and operator algebras; see, \cite{NSW, NV2000, SFR}. 
As noted in \cite{BNS} and \cite{NV}, key constructions of weak bialgebras include  groupoid algebras and their duals, dynamical twists of quantum groups \cite{EN},
Yamanouchi's generalized Kac algebras \cite{Yam}, and Hayashi's face algebras \cite{H93, Hayashi98}.
Weak Hopf algebras also arise naturally in the setting of tensor categories (where by a tensor category we mean an essentially small, $\kk$-linear, additive, abelian, finite, monoidal category with bi-additive tensor product). Indeed their categories of (co)modules have monoidal structures, 
and a remarkable result of Hayashi is that every fusion category is tensor equivalent to the category of finite-dimensional comodules over a face algebra (i.e., over a certain cosemisimple weak bialgebra) \cite[Theorem~4.1]{Hayashi99}, see also \cite[Theorem~1.4]{Sz2} \cite[Corollary~2.22]{ENO}.

\smallskip

Classically, symmetries of algebras are captured both by actions of groups by automorphisms and by actions of Lie algebras by derivations;
this framework works well when the algebras under investigation are commutative.
However, for noncommutative algebras, a broader notion of symmetry is needed, and actions and coactions of bialgebras and of Hopf algebras provide a suitable framework \cite{Drinfeld} \cite[Part~I]{Kassel}; see also \cite[Section~4]{Walton}. 
Typically the bialgebra and the algebra are connected graded or, more generally, augmented over $\kk$. We aim to establish a framework for quantum symmetry for algebras with a base algebra which is larger than the base field $\kk$. 
Such algebras include path algebras of quivers \cite[Chapter~II]{ASS2006}, 
smash product algebras \cite[Chapter 4]{Mo}, and skew Calabi-Yau algebras \cite{RRZ}. 
As described below, path algebras of quivers admit natural symmetries via coactions of Hayashi's face algebras, and so we expect that (co)actions of weak bialgebras and weak Hopf algebras will serve as a means to achieve this goal; see also \cite{HWWW}.

\smallskip

To begin, we need a {\it formulaic} definition of a weak bialgebra $H$ coacting on an algebra.

\medskip

\noindent {\bf Convention.} We employ Sweedler notation freely here and throughout (see Section~\ref{sec:modules}). 

\begin{definition}[{\cite[Definition 2.1]{Bohm}, see also \cite[Proposition 4.10]{C-DG}}]
Let $H$ be a weak bialgebra and $H_t$ be its target counital subalgebra [Definition~\ref{def:eps}].
We say that a $\kk$-algebra $(A, \; m:A \otimes A \to A, \; u:\kk \to A)$ is a {\it right $H$-comodule algebra} if $A$ is a right $H$-comodule via $\rho: A \to A \otimes H$, $a \mapsto a_{[0]} \otimes a_{[1]}$ so that $(ab)_{[0]} \otimes (ab)_{[1]} = a_{[0]}b_{[0]} \otimes a_{[1]}b_{[1]}$ and $\rho(1_A) \in A \otimes H_t$. Here, $m(a \otimes b)=ab$ and $1_A = u(1_\kk)$.
\end{definition}

As for bialgebras and Hopf algebras, the formulaic definition above should coincide with the notion of an algebra in the monoidal category $\mathcal{M}^H$ of comodules over $H$. The monoidal structure of $\mathcal{M}^H$ is understood by work of Nill and B\"ohm--Caenepeel--Janssen \cite{Nill, BCJ} (see Section~\ref{sec:corep}).
Notably, when $H$ is a weak bialgebra (that is not a bialgebra),
the monoidal product in $\mathcal{M}^H$ is not $\otimes_{\kk}$, nor is its unit object equal to $\kk$. So, algebras in $\mathcal{M}^H$ are not canonically $\kk$-algebras in this case. 
Nevertheless, there is a correspondence between 
right $H$-comodule algebras and algebras in $\mathcal{M}^H$;
this comprises our first  result. 


\begin{theorem}[Theorem~\ref{thm:alg}] \label{thm:alg-intro}
Let $H$ be a weak bialgebra. Consider the category $\mathcal{A}^H$ of right $H$-comodule algebras
(see Definition~\ref{def:A}) and the category ${\sf Alg}(\mathcal{M}^H)$ of algebras in the monoidal category $\mathcal{M}^H$ of right $H$-comodules (see Section~\ref{sec:corep} and Definition~\ref{def:algcoalgfrobalg}). Then there is an isomorphism of categories $$\mathcal{A}^H \cong {\sf Alg}(\mathcal{M}^H).$$ 

\vspace{-.2in}
\qed
\end{theorem}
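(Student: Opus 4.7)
The plan is to construct mutually inverse functors $F : \mc{A}^H \to {\sf Alg}(\MH)$ and $G : {\sf Alg}(\MH) \to \mc{A}^H$. Recall from the work of Nill and B\"ohm--Caenepeel--Janssen that the monoidal product in $\MH$ is $V \tenbar W := V \otimes_{H_t} W$, where the right and left $H_t$-actions on any right $H$-comodule are defined canonically from its coaction via the counital maps $\ep_t$ and $\ep_s$, and the unit object is $H_t$ itself equipped with a canonical $H$-coaction.

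For the functor $F$, given $(A, m_\kk, u_\kk, \rho) \in \mc{A}^H$, the first task is to verify that $m_\kk : A \otimes_\kk A \to A$ descends through the surjection $A \otimes_\kk A \twoheadrightarrow A \tenbar A$ to an $H$-comodule morphism $m : A \tenbar A \to A$. This descent relies on the multiplicativity of $\rho$ together with the hypothesis $\rho(1_A) \in A \otimes H_t$, which together force the left and right $H_t$-actions on $A$ (coming from the coaction) to interact compatibly with $m_\kk$. The unit morphism $u : H_t \to A$ in $\MH$ is defined by $u(x) := x \rt 1_A$ using the canonical left $H_t$-action on $A$; this map is a comodule morphism precisely because $\rho(1_A) \in A \otimes H_t$, and provides a two-sided unit for $m$ via a direct calculation against the left and right unitors $H_t \tenbar A \cong A$ and $A \tenbar H_t \cong A$.

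For the inverse functor $G$, given $(A, m, u) \in {\sf Alg}(\MH)$, I would take the $\kk$-multiplication to be the composite $A \otimes_\kk A \twoheadrightarrow A \tenbar A \xrightarrow{m} A$, the $\kk$-unit to be determined by $u_\kk(1_\kk) := u(1_H)$ (noting $1_H \in H_t$), and retain the underlying $H$-coaction $\rho$. Associativity and unitality of $m_\kk$ follow from those of $(m, u)$ in $\MH$; multiplicativity of $\rho$ with respect to $m_\kk$ follows from $m$ being a morphism in $\MH$; and the condition $\rho(1_A) \in A \otimes H_t$ is forced by $u : H_t \to A$ being a comodule morphism into $A$ with source the unit object $H_t$. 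The identities $FG = \id$ and $GF = \id$ then reduce to unpacking definitions, and the restricted isomorphism $\mc{A}^H_{f.d.} \cong {\sf Alg}(\MH_{f.d.})$ is immediate since neither $F$ nor $G$ alters the underlying $\kk$-vector space.

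The main obstacle will be the descent step in the forward direction: proving that $m_\kk$ factors through $A \otimes_{H_t} A$. This requires explicit Sweedler-notation computations using the formulas for the $H_t$-actions on $A$ and the weak bialgebra axioms linking $\Delta$, $\ep$, $\ep_t$, and $\ep_s$, combined with the multiplicativity of $\rho$ and the location of $\rho(1_A)$ inside $A \otimes H_t$. Once this factorization and the dual fact that $u$ is a comodule morphism are in hand, the remaining verifications are routine bookkeeping.
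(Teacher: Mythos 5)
Your overall architecture coincides with the paper's: you build mutually inverse functors, transport the multiplication through the canonical projection $A \otimes A \to A \tenbar A$ and inclusion $A \tenbar A \hookrightarrow A \otimes A$, and define the unit in $\MH$ by letting the counital subalgebra act on $1_A$. However, there is a concrete error in your setup: you have swapped $H_s$ and $H_t$. For \emph{right} $H$-comodules the unit object of $\MH$ is the \emph{source} counital subalgebra $H_s$, which is a right $H$-comodule because $\Delta(H_s) \subseteq H_s \otimes H$ by Proposition~\ref{prop:Hs-facts}(i) and \eqref{eq:Delta1}; the monoidal product is identified with $\otimes_{H_s}$ via the actions \eqref{eq:Hs-act}; and the unit of an algebra in $\MH$ is a map $H_s \to A$, namely $x \mapsto 1_A \triangleleft x$. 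By contrast $H_t$ is \emph{not} a right $H$-comodule under $\Delta|_{H_t}$ (its image lies in $H \otimes H_t$, not $H_t \otimes H$), so ``the unit object is $H_t$ with a canonical coaction'' and the unitors $H_t \tenbar A \cong A$ do not exist as you describe. The subalgebra $H_t$ enters only through the formulaic condition \eqref{eq:u-A-1}, $\rho_A(1_A) \in A \otimes H_t$. The bridge between the two subalgebras is $\Delta(1) \in H_s \otimes H_t$ (equation \eqref{eq:Delta1}), which is precisely what the paper uses to show that $x \mapsto 1_A \triangleleft x$ is a comodule map out of $H_s$, and, in the reverse direction, that $\rho_A(1_A) = \bar{u}_A(1_1) \otimes 1_2 \in A \otimes H_t$ for any algebra in $\MH$. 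Your claim that the condition $\rho_A(1_A) \in A \otimes H_t$ is ``forced by $u$ being a comodule morphism with source the unit object $H_t$'' must be replaced by this two-subalgebra argument.

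A smaller point: you single out the descent of $m_\kk$ through the balanced tensor product as the main obstacle. In the paper's presentation $A \tenbar A$ is realized as a \emph{subspace} of $A \otimes A$ (Section~\ref{sec:otimes-MH}), so the induced multiplication is simply the restriction $m_A \circ U^{A,A}$ and no descent is required. The genuine work lies elsewhere: checking that this restriction and the unit $x \mapsto 1_A \triangleleft x$ are morphisms of $H$-comodules, verifying unitality against $l_A$ and $r_A$ using Definition~\ref{def:wba}(iv), and establishing the round-trip identity $m_A\, U^{A,A}\, U_{A,A} = m_A$, which uses multiplicativity of $\rho_A$ together with the counit axiom \eqref{eq:Hcomod-2}. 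These are routine but not literally ``unpacking definitions,'' and they all go through once the $H_s$/$H_t$ roles are corrected.
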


This result has appeared as a special case of \cite[Proposition~3.9]{BCM}, but we include a full proof here, both for the reader's benefit and to build on it for subsequent results on comodule coalgebras and comodule Frobenius algebras.
The proofs of our results are all constructive, explicit, and written purely in the language of weak bialgebras.

\smallskip

To study comodule coalgebras over weak bialgebras, we note that a formulaic definition of an $H$-comodule coalgebra has appeared in the literature.

\begin{definition}[{\cite[Definition~1]{Jia}, \cite[Definition~2.1]{VZ}}]
Let $H$ be a weak bialgebra and $\varepsilon_s$ be its source counital map [Definition~\ref{def:eps}].
A $\kk$-coalgebra $(C,\;  \Delta:C \to C \otimes C, \; \varepsilon: C \to \kk)$ is called a {\it right $H$-comodule coalgebra} if $C$ is a right $H$-comodule via $\rho: C \to C \otimes H,$ \linebreak $c \mapsto c_{[0]} \otimes c_{[1]}$ so that $c_{1,[0]} \otimes c_{2,[0]} \otimes c_{1,[1]}c_{2,[1]} = c_{[0],1} \otimes c_{[0],2} \otimes c_{[1]}$ and $\varepsilon(c_{[0]})c_{[1]} = \varepsilon(c_{[0]})\varepsilon_s(c_{[1]})$.
\end{definition}

Moreover, we introduce the definition of an $H$-comodule Frobenius algebra.

\begin{definition}[\cref{def:F}] \label{def:wbaFrob-intro}
A Frobenius $\kk$-algebra $(A, m, u, \Delta, \varepsilon)$ is called a {\it right $H$-comodule Frobenius algebra} if $(A,m,u)$ is a right $H$-comodule algebra and $(A, \Delta, \varepsilon)$ is a right $H$-comodule coalgebra.
\end{definition}

We establish results relating the formulaic definitions above with categorical notions.

\begin{theorem}[Theorems~\ref{thm:coalg} and \ref{thm:FrobAlg}] 
\label{thm:coalg-intro}
Let $H$ be a weak bialgebra and let $\mathcal{M}^H$ denote the monoidal category of right $H$-comodules (see Section~\ref{sec:corep}).  Consider the categories below:
\begin{itemize}
    \item the category $\mathcal{C}^H$ of right $H$-comodule coalgebras [\cref{def:C}];
    \item the category $\mathcal{F}^H$ of right $H$-comodule Frobenius algebras [\cref{def:F}];
    \item the category ${\sf Coalg}(\mathcal{M}^H)$ of coalgebras in $\mathcal{M}^H$ [Definition~\ref{def:algcoalgfrobalg}]; and 
    \item the category ${\sf FrobAlg}(\mathcal{M}^H)$ of Frobenius algebras in $\mathcal{M}^H$ [Definition~\ref{def:algcoalgfrobalg}].
\end{itemize} 
Then, we have the following isomorphisms of categories:
\[\mathcal{C}^H \cong {\sf Coalg}(\mathcal{M}^H) \quad \quad \text{and} \quad \quad \mathcal{F}^H \cong {\sf FrobAlg}(\mathcal{M}^H).\]

\vspace{-.25in}
\qed
\end{theorem}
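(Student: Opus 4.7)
The plan is to mirror the strategy used in the proof of \cref{thm:alg-intro}: construct explicit mutually inverse functors between the formulaically-defined category $\mathcal{C}^H$ and the categorically-defined one ${\sf Coalg}(\MH)$, and then bootstrap these, together with \cref{thm:alg-intro}, to obtain the Frobenius version.

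For the coalgebra isomorphism, I would first recall from \cref{sec:corep} the Nill--B\"ohm--Caenepeel--Janssen description of $\tenbar$: the coaction on $V \tenbar W$ is the diagonal coaction $v \otimes w \mapsto v_{[0]} \otimes w_{[0]} \otimes v_{[1]} w_{[1]}$ restricted to an appropriate subspace of $V \otimes W$ cut out by an idempotent involving $\varepsilon_s$ or $\varepsilon_t$, and the unit object is built from $H_t$ with its canonical coaction. Given $(C,\Delta,\varepsilon) \in \mathcal{C}^H$, I would then verify that the first formulaic identity, namely $c_{1,[0]} \otimes c_{2,[0]} \otimes c_{1,[1]}c_{2,[1]} = c_{[0],1} \otimes c_{[0],2} \otimes c_{[1]}$, is precisely the statement that $\Delta : C \to C \otimes C$ is right $H$-colinear with respect to the diagonal coaction; and further, that this identity forces the image of $\Delta$ to lie in the subspace $C \tenbar C$ (this is the main structural calculation, and uses the weak bialgebra axioms together with the idempotent description of $\tenbar$). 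Next, I would show that the second identity $\varepsilon(c_{[0]})c_{[1]} = \varepsilon(c_{[0]})\varepsilon_s(c_{[1]})$ is equivalent to $\varepsilon$ being a morphism in $\MH$ from $C$ to the unit object. These checks define a functor $F : \mathcal{C}^H \to {\sf Coalg}(\MH)$. For the inverse $G$, I would compose any categorical comultiplication $\Delta' : C \to C \tenbar C$ with the inclusion $C \tenbar C \hookrightarrow C \otimes C$, and any categorical counit with the canonical $\kk$-linear map from the unit object to $\kk$; verifying that $F$ and $G$ are mutually inverse on objects and morphisms amounts to a Sweedler-notation computation entirely parallel to the one for algebras.

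The Frobenius isomorphism then follows essentially formally. A Frobenius algebra in $\MH$ is, by definition, a compatible algebra-and-coalgebra in $\MH$, and an object of $\mathcal{F}^H$ is a compatible right $H$-comodule algebra and right $H$-comodule coalgebra. The algebra and coalgebra data transport back and forth by \cref{thm:alg-intro} and the coalgebra equivalence above, so it remains only to confirm that the Frobenius compatibility
\[
(\id \otimes m)(\Delta \otimes \id) \;=\; \Delta \circ m \;=\; (m \otimes \id)(\id \otimes \Delta),
\]
viewed as an equation of $\kk$-linear maps $A \otimes A \to A \otimes A$, holds iff the analogous equation in $\MH$ holds between morphisms $A \tenbar A \to A \tenbar A$. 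Since $\tenbar$ is obtained by restricting $\otimes$ to a subspace cut out by an idempotent, and since $m$ and $\Delta$ correspond on the nose under $F$ and its algebra analog, this equivalence is immediate.

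The main obstacle is the coalgebra step, specifically the subtlety that $\Delta$ must factor through $C \tenbar C$ rather than merely land in $C \otimes C$; pinning this down requires a careful application of the idempotent characterization of $\tenbar$ together with the weak comultiplicativity of $H$. The restriction to the finite-dimensional subcategories is then automatic, since every construction involved preserves the underlying $\kk$-dimension of the objects.
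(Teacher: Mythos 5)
Your proposal follows essentially the same route as the paper: mutually inverse functors built from the projection $\eta_{C,C}\colon C\otimes C \to C \tenbar C$ and the inclusion $C\tenbar C \hookrightarrow C\otimes C$ (the (co)monoidal structure maps of the forgetful functor), with the two formulaic identities translating into $H$-colinearity of the comultiplication and of the induced counit, and the Frobenius case obtained by combining this with the algebra theorem. Two small corrections: the unit object of $\MH$ for \emph{right} comodules is $H_s$, not $H_t$ (so the categorical counit is the map $c \mapsto \ep_C(c_{[0]})\eps(c_{[1]})$ into $H_s$, matching the appearance of $\eps$ in the formulaic condition), and the transfer of the Frobenius compatibility between maps $A\otimes A \to A\otimes A$ and maps $A\tenbar A\to A\tenbar A$ is not quite ``immediate,'' since $U^{A,A}U_{A,A}$ is only an idempotent on $A\otimes A$; one still needs the short Sweedler computation (using the comodule-coalgebra identity and counitality) showing both sides equal $ab_1\tenbar b_2$, exactly as in the paper's Claim~\ref{claim:FrobAlg.F}.
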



\smallskip

 As mentioned above, our interest in comodule algebras over weak bialgebras is motivated by  coactions of Hayashi's face algebras \cite{H93},
 which include weak bialgebras $\hay$ attached to a finite quiver $Q$ (see Example~\ref{ex:hay}). In \cite[Section~2]{Hayashi99b}, Hayashi showed that the corresponding path algebra $\kk Q$  is a comodule over $\hay$, and we further establish that  $\kk Q$ is an $\hay$-comodule algebra  in order to illustrate Theorem~\ref{thm:alg-intro} [\cref{ex:alg-kQ}]. The  $\hay$-comodule structure on $\kk Q$ is also used for our running examples of weak comodule coalgebras [\cref{ex:coalg-kQ}] and weak comodule Frobenius algebras [\cref{ex:Frobalg-kQ,ex:FrobAlgMatrixAlg}], which illustrate Theorem~\ref{thm:coalg-intro}.

\smallskip

To produce another collection of examples for Theorem~\ref{thm:alg-intro} above, we employ the  quantum transformation groupoids presented in \cite[Section~2.6]{NV}.
These are weak Hopf algebras $H(L,B,\triangleleft)$ that depend on a Hopf algebra $L$, a strongly separable algebra $B$, and an action $B \otimes L \overset{\triangleleft}{\to} B$ making $B$ a right $L$-module algebra [\cref{def:QTG}].
Since a verification that $H(L,B,\triangleleft)$ is indeed a weak Hopf algebra is not provided in the literature, we provide a proof in 
the first preprint version of this work (available at arXiv:math/1911.12847v1). 
Our last  result is that we  construct a monoidal functor from a category of $L$-bicomodules to the corepresentation category $\mathcal{M}^{H(L,B,\triangleleft)}$.

\begin{theorem}[Theorem~\ref{thm:QTG}]
Let $L$, $B$, $\triangleleft$ be as above,  and let $H(L,B,\triangleleft)$ denote the corresponding quantum transformation groupoid. Let $\LBicomod$ be the monoidal category of $L$-bicomodules given in \cref{def:Lbicomod}. Then there is a monoidal functor $$\widehat{\Gamma}: \LBicomod \to \mathcal{M}^{H(L,B,\triangleleft)}.$$ 

\vspace{-.25in}

\qed
\end{theorem}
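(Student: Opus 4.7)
The plan is to build $\widehat{\Gamma}$ on objects by using the fact that $H := H(L,B,\triangleleft)$ is constructed from $L$ together with two ``boundary'' copies of the strongly separable algebra $B$ (which by \cref{appendix} sit inside $H$ as the target and source counital subalgebras $H_t$ and $H_s$). For an $L$-bicomodule $(M, \lambda, \rho)$ with $\lambda(m)=m_{(-1)}\otimes m_{(0)}$ and $\rho(m) = m_{(0)}\otimes m_{(1)}$, I would define $\widehat{\Gamma}(M)$ to be $B\otimes M\otimes B$ (or, equivalently, $M$ regarded as a $B$-bimodule via the identifications $H_t,H_s\cong B$) and equip it with a right $H$-coaction assembled from the two $L$-coactions on $M$ together with the multiplications of the two copies of $B$. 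Concretely, the coaction should send $b\otimes m\otimes b'$ to $(1\otimes m_{(0)}\otimes 1)\otimes (b\otimes m_{(-1)}m_{(1)}\otimes b')$ (up to the conventions chosen in the appendix). Verifying that this is a coassociative, counital right $H$-coaction would reduce to the $L$-bicomodule axioms of $M$ together with the compatibility of multiplication and the comultiplication of $H$ — essentially the content of the appendix's construction of $H$.

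Next I would define $\widehat{\Gamma}$ on morphisms: for an $L$-bicolinear map $f\colon M\to N$, set $\widehat{\Gamma}(f) := \id_B \otimes f\otimes \id_B$. $H$-colinearity is immediate from the fact that $f$ intertwines both the left and right $L$-coactions, and functoriality is clear.

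For the monoidal structure, recall that in $\mathcal{M}^H$ the monoidal product $V\otimes^H W$ is the image of a canonical idempotent on $V\otimes_\kk W$ determined by $H_t$, with unit $H_t\cong B$ (see \cref{sec:corep}), whereas in $\LBicomod$ the monoidal product is the cotensor product $M\square_L N$ with unit $L$. I would construct natural $H$-colinear maps
\[
\widehat{\Gamma}_0\colon H_t \longrightarrow \widehat{\Gamma}(L), \qquad
\widehat{\Gamma}_2(M,N)\colon \widehat{\Gamma}(M)\otimes^H \widehat{\Gamma}(N) \longrightarrow \widehat{\Gamma}(M\square_L N),
\]
where $\widehat{\Gamma}_0$ is induced by the unit $u_L\colon \kk\to L$ (tensored with $B$), and $\widehat{\Gamma}_2$ is induced by multiplication of the ``inner'' $B$-factors together with the projection onto the cotensor product. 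Naturality in both variables, and the fact that the middle $B$-multiplication correctly interacts with the $L$-coactions, should follow from $\triangleleft$ being a right $L$-module algebra action — this is where the hypothesis enters in an essential way.

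The hard part will be step three: showing that $\widehat{\Gamma}_2$ lands in the Takeuchi-type subobject that defines $\otimes^H$ in $\mathcal{M}^H$, and then verifying the pentagon and unit-triangle coherence axioms. The subtlety is that the single $H$-coaction on $\widehat{\Gamma}(M)\otimes^H \widehat{\Gamma}(N)$ braids the $B$-factors and $L$-factors in a way dictated by the comultiplication of $H$ worked out in \cref{appendix}, while the cotensor product $M\square_L N$ only ``identifies'' the $L$-factors coming from the right coaction on $M$ and the left coaction on $N$. Matching these identifications requires a careful bookkeeping of how the $B$-bimodule structures on $\widehat{\Gamma}(M)$ and $\widehat{\Gamma}(N)$ encode the inner $L$-coactions after $\triangleleft$. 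Once this compatibility is established, coherence should follow formally from coassociativity of the $L$-coactions and associativity of multiplication in $B$, so that $\widehat{\Gamma}$ is at least lax monoidal; whether it is strong will depend on whether the cotensor map $\widehat{\Gamma}_2$ is an isomorphism, which I would expect to hold because $B$ is strongly separable (so the relevant idempotent is split and identifies the cotensor product with the image of the Takeuchi projection).
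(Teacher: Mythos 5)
Your object-level definition does not match what is needed, and the mismatch is structural rather than a matter of conventions. The coaction that works on $\Gamma(X)=B^{op}\otimes X\otimes B$ uses \emph{only} the right $L$-coaction of $X$ and mimics $\Delta_H$ itself, inserting the separability idempotent:
\[
\rho_{\Gamma(X)}(a\otimes x\otimes b)=(a\otimes x_{[0]}\otimes e^{(1)})\otimes\bigl((x_{[1],1}\triangleright e^{(2)})\otimes x_{[1],2}\otimes b\bigr).
\]
Your proposed coaction $b\otimes m\otimes b'\mapsto (1\otimes m_{(0)}\otimes 1)\otimes(b\otimes m_{(-1)}m_{(1)}\otimes b')$ moves both $B$-entries into the $H$-leg and multiplies the two $L$-legs; it is not counital: applying $\id\otimes\varepsilon$ with $\varepsilon(a\otimes h\otimes b)=\omega(a(b\triangleleft S_L^{-1}(h)))$ leaves the first and third tensor factors stuck at $1_B$, so you cannot recover $b\otimes m\otimes b'$. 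In the correct construction the left $L$-coaction plays no role in the comodule structure at all --- $\widehat\Gamma$ factors as $\Gamma\circ U'$ through the forgetful functor to right $L$-comodules --- and the left coaction enters only through the monoidal structure map.

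Second, you have misidentified the monoidal structure on the source. By Definition~\ref{def:Lbicomod}, $\LBicomod$ carries the tensor product $\otimes_\kk$ with unit object $\kk$ (the paper explicitly contrasts this with ${}^L\mathcal{M}^L$ and its cotensor product), so there is no cotensor product $M\square_L N$ in sight, no Takeuchi-type subobject to land in, and $\widehat\Gamma_0$ must be a map $H_s\to\widehat\Gamma(\kk)$ induced by dropping $1_L$, not a map $H_t\to\widehat\Gamma(L)$ induced by $u_L$. (For right $H$-comodules the unit of $\MH$ is $H_s$, not $H_t$.) The correct structure map is
\[
\widehat\Gamma_{X,X'}\bigl((a\otimes x\otimes b)\tenbar(a'\otimes x'\otimes b')\bigr)=(x_{[-1]}\triangleright a')a\otimes x_{[0]}\otimes x'_{[0]}\otimes(b\triangleleft x'_{[1]})b',
\]
modelled on $m_H$; it is not ``multiplication of the inner $B$-factors'' but a twist of the outer ones by the two $L$-coactions via $\triangleleft$ and $\triangleright$. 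Finally, your expectation that strong separability of $B$ makes $\widehat\Gamma$ strong monoidal is refuted in Section~\ref{sec:QTG}: $\widehat\Gamma(\unit_{\LBicomod})\cong B^{op}\otimes B$ while $\unit_{\MH}\cong B$ by Lemma~\ref{lem:qtg-subalg}, so $\widehat\Gamma_0$ can only be an isomorphism when $B=\kk$; the functor is genuinely only lax monoidal.
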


This theorem provides a mechanism for creating ``weak quantum symmetry" (weak Hopf algebra coactions) from ``ordinary quantum symmetry" (Hopf algebra coactions). 
Several examples of this construction are provided at the end of Section~\ref{sec:QTG}.

\medskip

\noindent {\bf Acknowledgements.}
The authors would like to thank Fabio Calder\'{o}n, Hongdi Huang and James Zhang for helpful discussions that improved the quality of this manuscript.
C. Walton is supported by a research grant from the Alfred P. Sloan foundation and by NSF grants \#DMS-1903192, 2100756. R. Won is supported by an AMS--Simons Travel Grant.


\section{Preliminaries on weak bialgebras} 

In this section, we begin by providing the algebraic set-up material for working over a field in Section~\ref{sec:field}. We then define and recall properties of weak bialgebras and of weak Hopf algebras in Section~\ref{sec:term}. We end by defining (co)modules over weak bialgebras in Section~\ref{sec:modules}. 


\subsection{Algebraic structures over a field} \label{sec:field}
Fix a base field $\kk$, and reserve $\otimes$ to mean $\otimes_{\kk}$. All algebraic structures in this work are $\kk$-linear.

Recall that a {\it $\kk$-algebra} is a $\kk$-vector space $A$ equipped with a multiplication map \linebreak $m: A \otimes A \to A$ and unit map $u: \kk \to A$ so that $m(m \otimes \text{Id}) = m (\text{Id} \otimes m)$ and $m(u \otimes \text{Id}) = \text{Id} = m(\text{Id} \otimes u)$. We reserve the notation 1 to mean
$1:= 1_A:= u(1_\kk).$
A {\it $\kk$-coalgebra} is a $\kk$-vector space $C$ equipped with a comultiplication map $\Delta: C \to C \otimes C$ and counit map $\ep: C \to \kk$ so that $(\Delta \otimes \text{Id})\Delta = (\text{Id} \otimes \Delta)\Delta$ and $(\ep \otimes \text{Id}) \Delta = \text{Id} = (\text{Id} \otimes \ep)\Delta$. When we refer to a (co)algebra, we mean a (co)associative (co)unital (co)algebra.
A {\it Frobenius algebra over $\kk$} is a $\kk$-vector space $A$ that is simultaneously a $\kk$-algebra $(A,m,u)$ and a $\kk$-coalgebra $(A, \Delta, \ep)$ so that $(m \otimes \id)(\id \otimes \Delta)=\Delta m = (\id \otimes m)(\Delta \otimes \id)$.

\smallskip

If $(C, \Delta, \varepsilon)$ is a coalgebra and $c\in C$, we use sumless Sweedler notation:$\Delta(c):=c_1 \otimes c_2,$ 
 $\Delta^2(c)=c_1 \otimes c_2 \otimes c_3$,  etc.
There may be multiple coproducts of $1$ appearing in the same formula, and  we denote different copies of $1$ using primed notation; e.g.,
$\Delta(1)\otimes \Delta(1) = 1_1\otimes 1_2 \otimes 1_1' \otimes 1_2'.$


\subsection{Terminology and properties} \label{sec:term}

\begin{definition} \label{def:wba}
A \textit{weak bialgebra} is a quintuple $(H,m,u,\Delta, \varepsilon)$ such that
\begin{enumerate}[label=(\roman*)]
    \item $(H,m,u)$ is an algebra,
    \item $(H, \Delta, \varepsilon)$ is a  coalgebra,
    \item \label{def:wba3} $\Delta(ab)=\Delta(a)\Delta(b)$ for all $a,b \in H$,
    \item \label{def:wba4} $\varepsilon(abc)=\varepsilon(ab_1)\varepsilon(b_2c)=\varepsilon(ab_2)\varepsilon(b_1c)$ for all $a,b,c \in H$,
    \item \label{def:wba5} $\Delta^2(1)=(\Delta(1) \otimes 1)(1 \otimes \Delta(1))=(1 \otimes \Delta(1))(\Delta(1) \otimes 1)$.
\end{enumerate}
\end{definition}

The difference between a bialgebra and a weak bialgebra can be understood as a weakening of the compatibility between the algebra and coalgebra structures. In a weak bialgebra, we still have that comultiplication is multiplicative (e.g.,  condition \ref{def:wba3}), but the counit is no longer multiplicative and we do not necessarily have $\Delta(1)=1 \otimes 1$ or $\varepsilon(1)=1$. Instead, we have weak multiplicativity of the counit (condition \ref{def:wba4}) and weak comultiplicativity of the unit (condition \ref{def:wba5}). 

\begin{definition}[$\varepsilon_s$, $\varepsilon_t$, $H_s$, $H_t$] \label{def:eps}
Let $(H, m, u, \Delta, \varepsilon)$ be a weak bialgebra. We define the {\it source and target counital maps}, respectively as follows:
\[
    \varepsilon_s: H \to H, \; x \mapsto 1_1\ep(x1_2) \qquad \text{and} \qquad
    \ept: H \to H, \;  x \mapsto \ep(1_1x)1_2.
\]
We denote the images of these maps as $H_s:=\eps(H)$ and $H_t:=\ept(H)$. We call $H_s$ and $H_t$ the {\it counital subalgebras} of $H$ (see \cref{prop:Hs-facts}).
\end{definition}

With these definitions in mind, we can define a weak Hopf algebra.

\begin{definition} \label{def:wha}
A \textit{weak Hopf algebra} is a sextuple $(H,m,u,\Delta,\varepsilon, S)$, where $(H,m,u,\Delta,\varepsilon)$ is a weak bialgebra and $S: H \to H$ is a $\kk$-linear map called the \textit{antipode} that satisfies the following properties for all $h\in H$:
    $S(h_1)h_2=\varepsilon_s(h)$,
    $h_1S(h_2)=\varepsilon_t(h)$,
    $S(h_1)h_2S(h_3)=S(h)$.
\end{definition}


We notice that the antipode axioms for a weak Hopf algebra use the maps $\eps, \ept$ instead of $\ep$ as for ordinary Hopf algebras. In fact, the counital subalgebras $H_s=\eps(H)$ and $H_t=\ept(H)$ are often used instead of $\kk = \ep(H)$ in the weak Hopf algebra setting. These subalgebras have special properties that we will need below.

\begin{proposition}[Facts about counital subalgebras] \label{prop:Hs-facts} 

Let $H$ be a weak bialgebra.
\begin{enumerate}[label=(\roman*),font=\upshape]
    \item\label{prop:Hs-facts-1} \cite[Prop. 2.2.1(iii)]{NV}  $h \in H_s \Leftrightarrow \Delta(h)=1_1 \otimes h 1_2$ and $h \in H_t \Leftrightarrow \Delta(h)=1_1h \otimes 1_2$.
    \item \cite[Prop. 2.2.2]{NV} $H_s$ (resp., $H_t$) is a left (resp., right) coideal subalgebra of $H$.
    \item \cite[Prop. 2.2.2]{NV} If $y \in H_s$ and $z \in H_t$, then $yz=zy$. 
    \item \cite[Prop. 1.17]{BCJ} \label{prop:Hs-facts-5}  $H_s$ is a coalgebra with counit $\varepsilon|_{H_s}$ and comultiplication  
$$\Delta_s: H_s \to H_s \otimes H_s, \quad
y \mapsto 1_1 \otimes \varepsilon_s(y 1_2)=y_1 \otimes \eps(y_2). $$
    \item \cite[Prop. 1.17]{BCJ}  $H_t$ is a coalgebra with counit $\varepsilon|_{H_t}$ and comultiplication  
$$\Delta_t: H_t \to H_t \otimes H_t, \quad
z \mapsto \ept(1_1z) \otimes 1_2 =\ept(z_1)\otimes z_2.$$ 
\end{enumerate}
\end{proposition}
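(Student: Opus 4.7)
The plan is to prove item (i) first, as the intrinsic characterizations $H_s = \{h \in H : \Delta(h) = 1_1 \otimes h 1_2\}$ and $H_t = \{h : \Delta(h) = 1_1 h \otimes 1_2\}$ drive the rest of the proposition. Once these are available, items (ii)--(v) reduce to careful manipulations with axioms (iii)--(v) of Definition~\ref{def:wba}, together with the elementary identity $\Delta(1)\Delta(1) = \Delta(1)$ coming from multiplicativity of $\Delta$ applied to $1 \cdot 1 = 1$.

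For (i), the forward direction is a short computation: writing $h = \varepsilon_s(x) = 1_1 \varepsilon(x 1_2)$, applying $\Delta$, and rewriting the resulting $\Delta(1_1) \otimes 1_2 = 1_{1,1} \otimes 1_{1,2} \otimes 1_2$ via the identity $1_1 \otimes 1_2 \otimes 1_3 = 1_1 \otimes 1_1' 1_2 \otimes 1_2'$ from axiom (v). Absorbing the scalar $\varepsilon(x\,\cdot)$ into the primed copy then exhibits $\varepsilon_s(x) 1_2$ in the second tensor factor. The converse is immediate by applying $\mathrm{Id} \otimes \varepsilon$ to the hypothesis $\Delta(h) = 1_1 \otimes h 1_2$: the counit axiom gives $h$ on the left and $1_1 \varepsilon(h 1_2) = \varepsilon_s(h)$ on the right, so $h \in H_s$. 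The $H_t$ version is symmetric, using the dual form of axiom (v).

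Items (ii) and (iii) then follow from (i) by direct but somewhat delicate bookkeeping. For (ii), the coideal condition $\Delta(H_s) \subseteq H \otimes H_s$ reduces to showing that for $h \in H_s$ the element $h 1_2$ is again in $H_s$, which is checked by applying $\Delta$ and invoking axiom (v) once more. Closure of $H_s$ under multiplication is obtained by expanding $\Delta(xy) = \Delta(x)\Delta(y)$ using the characterization from (i), then absorbing an extra copy of $\Delta(1)$ to recover the characterizing form $1_1 \otimes (xy) 1_2$. For (iii), one expands $yz$ and $zy$ with the definitions of $\varepsilon_s$ and $\varepsilon_t$, rearranges scalars, and applies the two halves of axiom (v) (which differ only in the order of the two copies of $\Delta(1)$) to swap the middle tensor slots and conclude equality after pairing the counits.

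The coalgebra structures in (iv) and (v) require first checking that $\Delta_s$ and $\Delta_t$ land in $H_s \otimes H_s$ and $H_t \otimes H_t$ respectively, then verifying coassociativity and the counit axiom. Landing is the main subtlety: the factor $\varepsilon_s(y 1_2)$ is in $H_s$ by definition, but the tensor factor $1_1$ also requires the observation that $1 \in H_s$ (i.e.\ $\varepsilon_s(1) = 1_1 \varepsilon(1_2) = 1$). Coassociativity and the counit law then follow by repeated application of axiom (v) and the weak multiplicativity of the counit in axiom (iv). The principal obstacle throughout is organizational rather than conceptual: at each step one must invoke axiom (v) in exactly the right position to collapse or expand a copy of $\Delta(1)$, and tracking the multiple primed copies of $\Delta(1)$ carefully is what makes the calculations nontrivial.
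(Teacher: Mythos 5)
The paper itself offers no proof of this proposition: all five items are quoted directly from \cite{NV} and \cite{BCJ}, so you are supplying an argument where the authors simply defer to references. Your treatment of (i) is the standard one and is correct: for $h=\varepsilon_s(x)$ one computes $\Delta(h)=1_1\otimes 1_2\,\varepsilon(x1_3)$ and collapses this to $1_1\otimes \varepsilon_s(x)1_2$ using Definition~\ref{def:wba}(v), while the converse is exactly $(\id\otimes\varepsilon)$ applied to the hypothesis. Your sketches of (iii), (iv), (v) are also along the standard lines and workable.

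Item (ii), however, contains a genuine error. The correct coideal containment for $H_s$ is $\Delta(H_s)\subseteq H_s\otimes H$, not $\Delta(H_s)\subseteq H\otimes H_s$ as you assert, and the lemma you propose to establish it --- that $h1_2$ lies in $H_s$ for $h\in H_s$ --- is false in general, so no amount of manipulation with axiom (v) will verify it. Concretely, in Hayashi's face algebra $\mfH(Q)$ of Example~\ref{ex:hay} with $|Q_0|=n\geq 2$, take $h=a_j=\sum_i x_{i,j}\in \mfH(Q)_s$. Then $1_1\otimes 1_2=\sum_t a_t\otimes a_t'$ and $a_ja_t'=x_{t,j}$, so $\Delta(a_j)=1_1\otimes a_j1_2=\sum_t a_t\otimes x_{t,j}$; since the $a_t$ are linearly independent and no individual $x_{t,j}$ lies in $\mfH(Q)_s=\bigoplus_j \kk a_j$, this element is \emph{not} in $H\otimes H_s$. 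It is visibly in $H_s\otimes H$, which is the containment the paper actually uses in \S\ref{sec:unitobj-MH}, and that containment follows with no further computation from (i) together with $\Delta(1)\in H_s\otimes H_t$ as in \eqref{eq:Delta1} --- no second application of axiom (v) is required. (The mirror statement for $H_t$ is $\Delta(H_t)\subseteq H\otimes H_t$.) A smaller organizational point: your route to multiplicative closure of $H_s$ produces $\Delta(xy)=1_11_1'\otimes x1_2y1_2'$, and absorbing the extra copy of $\Delta(1)$ requires commuting $1_2\in H_t$ past $y\in H_s$, i.e.\ item (iii); so (iii) should be proved before, or independently of, the subalgebra half of (ii).
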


\begin{proposition}[Useful weak bialgebra identities] \cite[Prop. 2.2.1]{NV}
Let $H$ be a weak bialgebra and let $g,h \in H$. The following relations hold:
\begin{align}
    &\label{eq:a}\eps(\eps(h))=\eps(h), \hspace{1.05in} \ept(\ept(h))=\ept(h), \\
    & \label{eq:g} h_1\eps(h_2)=h=\ept(h_1)h_2, 
    \\
    & \label{eq:Delta1} \Delta(1) = 1_1 \otimes \ept(1_2) = \eps(1_1) \otimes 1_2 \in H_s \otimes H_t. 
\end{align}
\end{proposition}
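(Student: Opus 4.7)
The plan is to prove the three identities in the order (b), (c), (a): identity (b) depends only on multiplicativity of $\Delta$ and the counit axiom; identity (c) is extracted from axiom (v); and identity (a) is then an immediate consequence of (c). I would avoid trying to prove (a) directly from the definition of $\eps$, since pure Sweedler manipulation there leads quickly to expressions with three or four interacting copies of $\Delta(1)$.

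For (b), use axiom (iii) of Definition~\ref{def:wba} to write $\Delta(h)=\Delta(h\cdot 1)=\Delta(h)\Delta(1)$, which in Sweedler notation reads $h_1\otimes h_2 = h_1 1_1\otimes h_2 1_2$. Applying $\id\otimes\ep$ and using the counit axiom on the left yields $h = h_1 1_1\,\ep(h_2 1_2) = h_1\eps(h_2)$. The identity $\ept(h_1)h_2=h$ follows by the mirror computation, starting from $\Delta(h)=\Delta(1)\Delta(h)$ and applying $\ep\otimes\id$.

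For (c), apply $\id\otimes\ep\otimes\id$ to both sides of the Sweedler form of axiom (v), $1_1\otimes 1_2\otimes 1_3 = 1_1\otimes 1_2 1_1'\otimes 1_2'$. The left-hand side collapses to $\Delta(1)$ by coassociativity together with the counit axiom applied to $\Delta(1_1)$. The right-hand side becomes $1_1\otimes \ep(1_2 1_1')\,1_2' = 1_1\otimes \ept(1_2)$ by the definition of $\ept$. This gives $\Delta(1)=1_1\otimes \ept(1_2)$; applying the same map to the mirror form $1_1\otimes 1_1'1_2\otimes 1_2'$ of axiom (v) yields $\Delta(1)=\eps(1_1)\otimes 1_2$. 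The containment $\Delta(1)\in H_s\otimes H_t$ is then immediate from these two presentations together with $H_s=\eps(H)$ and $H_t=\ept(H)$.

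Finally, (a) is a one-line consequence of (c). Expanding and pulling the scalar $\ep(h 1_2)$ outside gives $\eps(\eps(h))=\ep(h 1_2)\,\eps(1_1)$, where the inner $\eps$ is evaluated using a fresh copy of $\Delta(1)$. By (c), the element $\eps(1_1)\otimes 1_2\in H\otimes H$ equals $1_1\otimes 1_2$, so contracting the second tensor slot against $\ep(h\cdot-)$ yields $\ep(h 1_2)\,\eps(1_1)=\ep(h 1_2)\,1_1=\eps(h)$. The identity $\ept\circ\ept=\ept$ follows by the symmetric argument. The main obstacle throughout is purely notational: each computation carries two independent copies of $\Delta(1)$, and one must keep the primed and unprimed Sweedler summations aligned with the correct instances of axioms (iv)–(v). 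Once this bookkeeping is set, each identity is only a few symbols long.
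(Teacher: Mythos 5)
The paper does not actually prove this proposition; it is quoted from \cite[Prop.~2.2.1]{NV}, so there is no in-paper argument to compare against. Judged on its own terms, your overall plan is sound, and parts (b), (a), and the second equality of (c) are correct: the derivation $h_11_1\ep(h_21_2)=h_1\eps(h_2)$ from $\Delta(h)=\Delta(h)\Delta(1)$ is exactly right, and the reduction of idempotency of $\eps$ to \eqref{eq:Delta1} by pulling the scalar $\ep(h1_2)$ out of the outer $\eps$ is clean.

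The genuine problem is in your proof of the first equality of \eqref{eq:Delta1}. Applying $\id\otimes\ep\otimes\id$ to the form $1_1\otimes 1_21_1'\otimes 1_2'$ of axiom (v) gives $1_1\otimes\ep(1_21_1')\,1_2'$, and you then assert this equals $1_1\otimes\ept(1_2)$ ``by the definition of $\ept$.'' It does not: by \cref{def:eps}, $\ept(1_2)=\ep(1_1'1_2)\,1_2'$, with the fresh leg $1_1'$ multiplied on the \emph{left} of $1_2$ inside $\ep$, whereas your expression has it on the right. Since $\ep$ is not tracial on a general weak bialgebra, $\ep(1_21_1')$ and $\ep(1_1'1_2)$ cannot be interchanged by definition alone --- this is precisely the bookkeeping hazard you flag at the end, and it is where the slip occurs. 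The repair is immediate: use the mirror form $1_1\otimes 1_1'1_2\otimes 1_2'$ of axiom (v) instead. In fact that single form delivers \emph{both} equalities of \eqref{eq:Delta1}: applying $\id\otimes\ep\otimes\id$ yields the scalar $\ep(1_1'1_2)$ times $1_1\otimes 1_2'$, and attaching the scalar to $1_2'$ gives $1_1\otimes\ept(1_2)$, while attaching it to $1_1$ gives $\eps(1_1')\otimes 1_2'$. With that correction the rest of your argument goes through unchanged.
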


 \begin{example}[Hayashi's face algebras attached to quivers]
\label{ex:hay}
Let $Q$ be a finite quiver. Denote by $Q_\ell$ the set of all paths of length $\ell$ in $Q$, and let $Q_0 = \{1, 2, \dots, n\}$. For a path $p$, let $s(p)$ and $t(p)$ denote the source and target of $p$, respectively. We define the weak bialgebra $\mfH(Q)$ as follows. As a $\kk$-vector space, $\mfH(Q) = \bigoplus_{\ell \geq 0} \bigoplus_{p, q \in Q_{\ell}} \kk x_{p,q}$, for indeterminates $x_{p,q}$. The algebra structure of $\mfH(Q)$ is given by
\[ x_{p,q} \cdot x_{p',q'} 
= \delta_{t(p), s(p')} \delta_{t(q), s(q')} x_{pp',qq'} 
=\begin{cases} x_{pp',qq'} & \text{if $t(p) = s(p')$ and $t(q) = s(q')$} \\
0 & \text{otherwise}
\end{cases}
\]
with $1 = \sum_{i, j \in Q_0} x_{i,j}$. For $p,q \in Q_{\ell}$, the coalgebra structure is given by
\[ \Delta\left(x_{p,q}\right) = \textstyle \sum_{t \in Q_{\ell}} x_{p,t} \otimes x_{t,q}
\quad \quad \text{and} \quad \quad \varepsilon\left(x_{p,q}\right) = \delta_{p,q}.
\]

For each $j \in Q_0$, the {\it face idempotents} of $\mfH(Q)$ are  $a_j:=\sum_{i=1}^n x_{i,j}$ and $a'_j:=\sum_{i=1}^n x_{j,i}$. For $p,q \in Q_{\ell}$, we can compute $\eps(x_{p,q}) = \delta_{p,q} \sum_{i = 1}^n x_{i, t(q)}$ and $\ept(x_{p,q}) = \delta_{p,q} \sum_{j = 1}^n x_{s(q),j}$. Hence, as $\kk$-vector spaces, $\mfH(Q)_s = \bigoplus_{j =1}^n \kk a_j$ and $\mfH(Q)_t = \bigoplus_{j=1}^n \kk a'_j$.
We refer to the reader to \cite{H93} for more details. 
\end{example}


\subsection{Modules and comodules} \label{sec:modules}

We now direct our attention to the (co)representation theory of weak bialgebras.
Let $H$ be a weak bialgebra and let $M$ be a $\kk$-vector space. We call $M$ a right \textit{$H$-(co)module} if it is a right (co)module under the (co)algebra structure  of $H$. We define left \textit{$H$-(co)modules} similarly. We call $M$ an \textit{$(H,H)$-bi(co)module} if it is a bi(co)module under the (co)algebra structure of $H$.

 Given an algebra $A$, we denote the actions for a right $A$-module $M$ and for a left $A$-module $N$ as follows:
\[\nu_M: M \otimes A \to M, \; \; m \otimes a \mapsto m \triangleleft a,\]
\[\mu_N: A \otimes N \to N, \;\; \; a \otimes n \mapsto a \triangleright n.\]

Now suppose that $M$ and $N$ are $(A,A)$-bimodules. Recall that the {\it tensor product} of $M$ and $N$ is an $(A,A)$-bimodule given as follows:
\[M \otimes_A N := (M \otimes N)/ \;\text{im}(\nu_M \otimes \id_N - \id_M \otimes \mu_N).\]
That is, $M \otimes_A N$ is a $\kk$-quotient space of $M \otimes N.$

 Given a coalgebra $C$, we denote the coactions for a right $C$-comodule $M$ and for a left $C$-comodule $N$ using sumless Sweedler notation:
\[\rho_M: M \to M \otimes C, \; \; m \mapsto m_{[0]}\otimes m_{[1]}, \quad\quad \lambda_N: N \to C \otimes N, \; \; n \mapsto n_{[-1]}\otimes n_{[0]}.\]
A (right) $C$-comodule $M$ must satisfy the coassociativity and unitality axioms: 
\begin{eqnarray}
    m_{[0],[0]} \otimes m_{[0],[1]} \otimes m_{[1]} &=& m_{[0]} \otimes m_{[1],1} \otimes m_{[1], 2}, \label{eq:Hcomod-1}\\
    m &=& \varepsilon(m_{[1]}) m_{[0]}. \label{eq:Hcomod-2}
\end{eqnarray}
for all $m\in M$. We let $\ComodC$ denote the category of right $C$-comodules. 

Now let $M,N$ be $(C,C)$-bicomodules. 
Recall that the {\it cotensor product} of $M$ and $N$ is a $(C,C)$-bicomodule given as follows:
\[M \otimes^C N := \ker \l(\rho_M \otimes \id_N - \id_M \otimes \lambda_N\r).\]
By definition, $M \otimes^C N$ is a $\kk$-subspace of $M \otimes N.$

\section{Monoidal categories and corepresentation categories} 

In this section, we begin by recalling facts about monoidal categories, functors between them, and algebraic structures within them in Section~\ref{sec:monoidal}. We then recall the definition of the corepresentation category of a weak bialgebra in Section~\ref{sec:corep}, and discuss functors between this category and other monoidal categories in Section~\ref{sec:MHfunctors}.

\subsection{Monoidal categories} \label{sec:monoidal}

\begin{definition}
A \textit{monoidal category} $\mathcal{C}$ consists of the following data:
\begin{itemize}
    \item a category $\CC$,
    \item a bifunctor $\otimes : \CC\times \CC\rightarrow \CC$,
    \item a natural isomorphism   $\alpha_{X,Y,Z}: (X \otimes Y)\otimes Z \overset{\sim}{\to} X \otimes (Y\otimes Z)$ for each $X,Y,Z \in \CC$, 
    \item an object $\unit \in \CC$,
    \item natural isomorphisms $l_X: \mathbbm{1} \otimes X \overset{\sim}{\to} X, \:\: r_X:X \otimes \mathbbm{1} \overset{\sim}{\to} X$ for each $X \in \CC$,
\end{itemize}
 such that the pentagon and triangle axioms are satisfied (see \cite[Eq. 2.2, 2.10]{EGNO}).
\end{definition}

\begin{example}\label{ex:monoidcat}
Examples of monoidal categories include the following:

\begin{itemize}
    \item $\Vec$, the category of finite-dimensional $\kk$-vector spaces, with $\otimes = \otimes_\kk$, $\unit = \kk$, and with the canonical associativity and unit isomorphisms;
    \smallskip
    \item ${}_A \mc{M}_A$,  the category of $(A,A)$-bimodules with monoidal product being the tensor product $\otimes_A$, for an algebra $A$;
    \smallskip
    \item ${}^C \mc{M}^C$,  the category of $(C,C)$-bicomodules with monoidal product being the cotensor product $\otimes^C$, for a coalgebra $C$. 
\end{itemize}

 If $C$ is a weak bialgebra, we can endow the category of right $C$-comodules with the structure of a monoidal category; see \cref{sec:corep} below. 
 \end{example}

\begin{definition} \label{def:monfunctor} \cite[p.85]{Str} \cite{DP08} \cite[Eq.6.46, 6.47]{Sz} Let $(\CC, \otimes_{\CC}, \unit_{\CC}, \alpha_{\ast,\ast,\ast}, l_\ast, r_\ast)$ and  $(\DD,\otimes_{\DD},\unit_{\DD},\alpha_{\ast,\ast,\ast}, l_\ast, r_\ast)$ be monoidal categories.

\begin{enumerate}[label=(\roman*)]
    \item  A \textit{monoidal functor} $(F, F_{\ast,\ast}, F_0): \CC \to \DD$ consists of the following data:
    \begin{itemize}
        \item a functor $F: \CC \to \DD$,
        \item a natural transformation $F_{X,Y}:F(X)\otimes_{\DD} F(Y) \to F(X \otimes_{\CC} Y)$ for all $X,Y \in \CC$,
        \item a morphism $F_0: \unit_{\DD} \to F(\unit_{\CC})$ in $\DD$,
    \end{itemize}
    that satisfy the associativity and unitality constraints: for all $X,Y,Z \in \mathcal{C}$, 
\[
    \begin{array}{ll}
    \smallskip
    &F_{X,Y\otimes_\mathcal{C} Z}\;(\id_{F(X)} \otimes_{\mathcal{D}} F_{Y,Z})\;\alpha_{F(X),F(Y),F(Z)}\\
    &\hspace{1.3in} = F(\alpha_{X,Y,Z})\;F_{X \otimes_\mathcal{C} Y,Z}\;(F_{X,Y}\otimes_{\mathcal{D}}\id_{F(Z)}),
    \end{array}
  \]
    
    \vspace{-.04in}
    
    \[
    \begin{array}{rl}
    \smallskip
    F(l_X)^{-1}\;l_{F(X)} &= F_{\unit_\mathcal{C},X} \; (F_0 \otimes_{\mathcal{D}} \id_{F(X)}),\\
    F(r_X)^{-1}\;r_{F(X)} &= F_{X,\unit_\mathcal{C}} \; (\id_{F(X)} \otimes_{\mathcal{D}} F_0).
    \end{array}
    \]
A \textit{strong monoidal functor} is a monoidal functor where $F_0,F_{X,Y}$ are isomorphisms for all $X,Y \in \CC$. 
    \smallskip
    \item  A \textit{comonoidal functor} $(F, F^{\ast,\ast}, F^0): \CC \to \DD$ consists of the following data:
    \begin{itemize}
        \item a functor $F: \CC \to \DD$,
        \item a natural transformation $F^{X,Y}: F(X \otimes_{\CC} Y) \to F(X)\otimes_{\DD} F(Y)$ for all $X,Y \in \CC$,
        \item a morphism $F^0: F(\unit_{\CC}) \to  \unit_{\DD}$ in $\DD$,    
    \end{itemize}
    that satisfy the coassociativity and counitality constraints dual to above. A \textit{strong comonoidal functor} is a comonoidal functor where $F^0,F^{X,Y}$ are isomorphisms for all $X,Y \in \CC$. 
    \smallskip
    \item  A \textit{Frobenius monoidal functor} $(F, F_{\ast,\ast}, F_0, F^{\ast,\ast}, F^0): \CC \to \DD$ is a functor  where $(F, F_{\ast,\ast}, F_0)$ is monoidal, $(F, F^{\ast,\ast}, F^0)$ is comonoidal, such that for all $X,Y,Z \in \CC$:
    \begin{align*}
   \textcolor{white}{.......} \; (F_{X,Y} \otimes_{\DD} \id_{F(Z)} )\alpha^{-1}_{F(X),F(Y),F(Z)}(\id_{F(X)} \otimes_{\DD} F^{Y,Z}) &= F^{X\otimes_{\CC}Y,Z}F(\alpha^{-1}_{X,Y,Z}) F_{X,Y\otimes_{\CC}Z}, \\
    (\id_{F(X)} \otimes_{\DD} F_{Y,Z}) \alpha_{F(X),F(Y),F(Z)} (F^{X,Y}\otimes_{\DD} \id_{F(Z)}) &= F^{X,Y\otimes_{\CC}Z}F(\alpha_{X,Y,Z})F_{X\otimes_{\CC}Y,Z}.
    \end{align*}
\end{enumerate}
\end{definition}

It is well known that a strong monoidal functor is the same as a strong comonoidal functor.  Moreover, any strong (co)monoidal functor is Frobenius monoidal  \cite[Proposition~3]{DP08}.

Now we turn our attention to algebraic structures in monoidal categories.

\begin{definition}[${\sf Alg}(\CC)$, ${\sf Coalg}(\CC)$, ${\sf FrobAlg}(\CC)$] 
\label{def:algcoalgfrobalg} Let $(\CC, \otimes, \unit, \alpha, l, r)$ be a  monoidal category.
\begin{enumerate}[label=(\roman*)]
    \item An \textit{algebra} in $\CC$ is a triple $(A,m,u)$, where $A$ is an object in $\CC$, and  $m:A \otimes A \to A$, $u:\unit \to A$ are morphisms in $\CC$,
    satisfying unitality and associativity constraints: 
    \begin{align} \label{eq:algassoc}
       m (m \otimes \id) = m(\id \otimes m) \alpha_{A,A,A},\quad \quad\\
       m (u \otimes \id) = l_A, \quad \quad  \quad m(\id \otimes u) = r_A. \label{eq:algunital}
    \end{align}
    A {\it morphism} of algebras $(A, m_A, u_A)$ to $(B, m_{B}, u_{B})$  is a morphism $f: A \to B$ in $\CC$ so that $fm_A = m_{B \otimes B}(f \otimes f)$ and $fu_A = u_{B}$.  Algebras in $\CC$ and their morphisms  form a category, which we denote by ${\sf Alg}(\CC)$.
    \smallskip
    
    \item A \textit{coalgebra} in $\CC$ is a triple $(C,\Delta,\varepsilon)$, where  $C$ is an object in $\CC$, and $\Delta:C \to C \otimes C$,  $\varepsilon:C \to \unit$ are morphisms in $\CC$,
    satisfying counitality and coassociativity constraints: 
    \begin{align}
        \label{eq:coalgcoassoc}
        \alpha_{C,C,C}(\Delta \otimes \id) \Delta = (\id \otimes \Delta)\Delta, \quad \quad \\ \label{eq:coalgcounital}
        (\ep \otimes \id)\Delta = l_C^{-1}, \quad \quad \quad (\id \otimes \ep)\Delta = r_C^{-1}.
    \end{align}
  A {\it morphism} of coalgebras $(C, \Delta_C, \ep_C)$ to $(D, \Delta_{D}, \ep_{D})$  is a morphism $g: C \to D$ in $\CC$ so that $\Delta_{D}g  = (g \otimes g) \Delta_{C}$ and $\ep_{D} g = \ep_C$. Coalgebras in $\CC$ and their morphisms  form a category, which we denote by ${\sf Coalg}(\CC)$.
  
   \smallskip
   
    \item A \textit{Frobenius algebra} in $\CC$ is a quintuple $(A, m, u, \Delta, \varepsilon)$ such that
   $(A,m,u) \in {\sf Alg}(\CC)$ and $(A,\Delta,\varepsilon) \in {\sf Coalg}(\CC)$, so that 
       \begin{equation}
       \label{eq:Frobalg}
       (m \otimes \id)\alpha^{-1}_{A,A,A}(\id \otimes \Delta)=\Delta m = (\id \otimes m)\alpha_{A,A,A}(\Delta \otimes \id).\end{equation}
    A {\it morphism} of Frobenius algebras in $\CC$ is a morphism in $\CC$ that lies in both ${\sf Alg}(\CC)$ and ${\sf Coalg}(\CC)$. 
    Frobenius algebras in $\CC$ and their morphisms  form a category, which we denote by ${\sf FrobAlg}(\CC)$.
\end{enumerate}
\end{definition}

Algebras, coalgebras, and Frobenius algebras in $\Vec$ are the same as $\kk$-algebras, $\kk$-coalgebras, and Frobenius algebras over $\kk$, respectively (cf. Section~\ref{sec:field}).
Next, we see that monoidal, comonoidal, and Frobenius monoidal  functors preserve, respectively, algebras, coalgebras, and Frobenius algebras in monoidal categories. 

\begin{proposition} \label{prp:monfunctor}
\cite[p.100-101]{Str} \cite[Lem. 2.1]{Sz} \cite[Cor. 5]{DP08} \cite[Prop.~2.13]{KongRunkel}
\begin{enumerate}[label=(\roman*), font=\upshape]
    \item \label{prp:laxmon}  Let $(F, F_{\ast,\ast}, F_0): \CC \to \DD$ be a monoidal functor. If $(A,m,u) \in {\sf Alg}(\CC)$, then $$(F(A),~ F(m)F_{A,A}, ~F(u)F_0) \in {\sf Alg}(\DD).$$
    \item \label{prp:laxcomon}  Let $(F, F^{\ast,\ast},F^0): \CC \to \DD$ be a comonoidal functor. If $(C,\Delta,\varepsilon) \in {\sf Coalg}(\CC)$, then 
    $$(F(C),~ F^{C,C}F(\Delta), ~F^0F(\varepsilon)) \in {\sf Coalg}(\DD).$$ 
    \item \label{prp:frobmon}  Let $(F, F_{\ast,\ast},F_0,F^{\ast,\ast},F^0):\CC \to \DD$ be a Frobenius monoidal functor. If $(A,m,u,\Delta,\varepsilon)$ is in ${\sf FrobAlg}(\CC)$, then 
    \[
    (F(A),\; F(m)F_{A,A}, \; F(u)F_0, \; F^{A,A}F(\Delta),\;  F^0F(\varepsilon)) \in {\sf FrobAlg}(\DD).
    \]
    
    \vspace{-.25in}
    
    \qed
\end{enumerate}
\end{proposition}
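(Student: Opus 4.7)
The plan is to verify the defining axioms of the output structure in $\DD$ by a direct diagram chase in each part. Each chase follows a uniform recipe: use naturality of the coherence morphisms $F_{\ast,\ast}$ (resp.\ $F^{\ast,\ast}$) to push $F$-images across tensor factors, collapse the resulting composition of $F$-images into a single $F$-image by functoriality, invoke the axiom of the algebraic structure in $\CC$, and finally apply the coherence axiom of $F$ to land in the desired form in $\DD$. For (i), to verify associativity of $F(m)F_{A,A}$, I would use naturality of $F_{\ast,\ast}$ at $m \otimes \id_A$ to rewrite $F_{A,A}(F(m) \otimes \id_{F(A)})$ as $F(m \otimes \id_A) F_{A \otimes A, A}$, combine $F(m)F(m \otimes \id_A) = F(m(m \otimes \id_A))$ by functoriality, apply associativity of $m$ in $\CC$ to introduce $F(\alpha_{A,A,A})$, and then invoke the associativity coherence of $F$ to convert $F(\alpha_{A,A,A}) F_{A \otimes A, A}(F_{A,A} \otimes \id)$ into $F_{A, A \otimes A}(\id \otimes F_{A,A}) \alpha_{F(A),F(A),F(A)}$; one final naturality step on the right side completes the equality. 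Left and right unitality follow by the analogous chase using the unit coherence axioms of $F$ and the unitality of $m$.

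Part (ii) is formally dual to (i): reverse every arrow, replace $(F_{\ast,\ast}, F_0)$ by $(F^{\ast,\ast}, F^0)$, and replace $(m,u)$ by $(\Delta,\varepsilon)$. I would not reproduce the calculations but cite the duality explicitly, noting that the coassociativity and counitality coherences of a comonoidal functor are precisely the arrow-reversals of the axioms used in (i), and that the coalgebra axioms on $C$ play the role of the algebra axioms on $A$.

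For (iii), by (i) and (ii) the tuple already carries compatible algebra and coalgebra structures in $\DD$, so what remains is the Frobenius compatibility \eqref{eq:Frobalg} on $F(A)$. To prove $\Delta_{F(A)} m_{F(A)} = (m_{F(A)} \otimes \id)\alpha^{-1}_{F(A),F(A),F(A)}(\id \otimes \Delta_{F(A)})$, I would rewrite $\Delta_{F(A)} m_{F(A)} = F^{A,A} F(\Delta) F(m) F_{A,A}$ as $F^{A,A} F(\Delta m) F_{A,A}$ by functoriality, apply the Frobenius law in $\CC$ to substitute $\Delta m = (m \otimes \id)\alpha^{-1}_{A,A,A}(\id \otimes \Delta)$ inside $F$, and then use naturality of $F^{\ast,\ast}$ at $m \otimes \id_A$ and of $F_{\ast,\ast}$ at $\id_A \otimes \Delta$ to pull $F(m) \otimes \id$ and $\id \otimes F(\Delta)$ to the outside, leaving a middle factor $F^{A \otimes A, A} F(\alpha^{-1}_{A,A,A}) F_{A, A \otimes A}$; the first Frobenius monoidal axiom of $F$ rewrites this factor as $(F_{A,A} \otimes \id)\alpha^{-1}_{F(A),F(A),F(A)}(\id \otimes F^{A,A})$, which reassembles into the target right-hand side. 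The second equality in \eqref{eq:Frobalg} follows symmetrically from the second Frobenius monoidal axiom. The main obstacle is the combinatorial bookkeeping of which coherence absorbs which associator, and in particular of keeping the domains and codomains of $F_{\ast,\ast}$ and $F^{\ast,\ast}$ aligned when they are interleaved; no new conceptual ingredient beyond the Frobenius monoidal axioms of $F$ themselves is required.
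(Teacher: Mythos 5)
Your proof is correct: the diagram chases in (i) and (iii) use exactly the right naturality squares, the coherence/Frobenius axioms of $F$ as stated in Definition~\ref{def:monfunctor}, and the structure axioms in $\CC$, and (ii) is indeed the formal dual of (i). The paper itself gives no proof of Proposition~\ref{prp:monfunctor} — it only cites the literature — and your argument is precisely the standard direct verification found in those references, so there is nothing to compare beyond noting that your write-up would serve as a complete proof.
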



\subsection{\texorpdfstring{The corepresentation category $\MH$}{The corepresentation category}} \label{sec:corep}
For a weak bialgebra $H = (H, m, u, \Delta, \varepsilon)$, the category $\MH$ of right $H$-comodules can be given the structure of a monoidal category:
\[\MH = ({\sf Comod}\text{-}H,  \hspace{.05in} \tenbar, \hspace{.05in} \unit = H_s, \hspace{.05in} \alpha = \alpha_{{\sf Vec}_\Bbbk}, \hspace{.05in} l, \hspace{.05in} r).
\]
We give further details about the tensor product $\tenbar$ and unitality constraints $l$ and $r$ below. We also recall some basic facts about $\MH$ which were provided in \cite{BCJ}, some of which appeared in the preprint \cite{Nill}. The connection between $\MH$ and the bi(co)module categories ${}_{H_s} \mc{M}_{H_s}$ and ${}^{H_s} \mc{M}^{H_s}$ are also highlighted below.

\begin{remark}
By \cite[Remark~2.4.1]{NV}, $H^{cop}$ is a weak Hopf algebra. Hence the category of left $H$-comodules is isomorphic to the category of right $H^{cop}$-comodules and we can identify ${}^H\mc{M}$ with $\mc{M}^{H^{cop}}$. Note that the category of left $H$-comodules can also be given the structure of a monoidal category, but one would need to use the counital subalgebra $H_t$ in place of $H_s$, along with implementing other similar substitutions.
\end{remark}
 
\subsubsection{\texorpdfstring{Tensor product $\tenbar$}{Tensor product}} \label{sec:otimes-MH} Given $M,N \in \MH$, the tensor product $M \otimes N$ has a coassociative, but not necessarily counital, right $H$-coaction given by $m \otimes n \mapsto m_{[0]} \otimes n_{[0]} \otimes m_{[1]}n_{[1]}$. Hence, in $\MH$ we force counitality by defining  the monoidal product of $M$ and $N$  to be
\begin{equation} \label{eq:tenbar}
 M \tenbar N := \left\{m \otimes n \in M \otimes N \mid m \otimes n = \varepsilon(m_{[1]}n_{[1]}) m_{[0]} \otimes n_{[0]} \right\}.
\end{equation}

We use the notation $m \tenbar n$ for the element $m \otimes n$ when viewed as an element of $M \tenbar N$. The right $H$-coaction on $M \tenbar N$ is given by
\begin{equation} \label{eq:rhoMN}
\begin{gathered}
\rho_{M  \tenbar  N}: M \tenbar N \to M \tenbar N \otimes H \\
\hspace{1.1in} m  \tenbar n \mapsto m_{[0]} \tenbar n_{[0]} \otimes m_{[1]}n_{[1]}.
 \end{gathered}
\end{equation}
The inclusion of $M \tenbar N$ into $M \otimes N$ is given by 
\begin{equation*}
\begin{gathered}
\hspace{-.22in}\iota_{M,N}: M \tenbar N \to M \otimes N \\
\hspace{.2in} \varepsilon(m_{[1]}n_{[1]}) m_{[0]}  \tenbar n_{[0]} \mapsto  \varepsilon(m_{[1]}n_{[1]}) m_{[0]} \otimes n_{[0]}.
 \end{gathered}
\end{equation*}
There is also a projection
\begin{equation*} 
\begin{gathered}
\eta_{M,N}: M \otimes N  \to M \tenbar N \\
\hspace{1.1in} m \otimes n \mapsto \varepsilon(m_{[1]}n_{[1]}) m_{[0]}  \tenbar n_{[0]}.
\end{gathered}
\end{equation*}

Next, we study $\tenbar$ of morphisms of $H$-comodules. 
If $f: M \to M'$ and $g: N \to N'$ are morphisms of right $H$-comodules, then we define 
\begin{align*} f \tenbar g: M \tenbar N &\to M' \tenbar N' \\
\ep(m_{[1]}n_{[1]})m_{[0]} \tenbar n_{[0]} &\mapsto \ep(m_{[1]}n_{[1]})f(m_{[0]}) \tenbar g(n_{[0]}).
\end{align*}
Note that since $f$ and $g$ are right $H$-comodule morphisms, we have 
\[
    \ep(f(m)_{[1]} g(n)_{[1]}) f(m)_{[0]} \tenbar g(n)_{[0]} = \ep(m_{[1]}n_{[1]})f(m_{[0]}) \tenbar g(n_{[0]}), 
\]
and so $\ep(m_{[1]}n_{[1]})f(m_{[0]}) \tenbar g(n_{[0]}) \in M' \tenbar N'$. This shows that for $m \tenbar n \in M \tenbar N$, \begin{equation}\label{eq:f-tenbar-g}
(f \tenbar g)(m \tenbar n) = f(m) \tenbar g(n).
\end{equation}

\subsubsection{\texorpdfstring{Unit object $~\unit = 
H_s$}{Unit object}} 
\label{sec:unitobj-MH}
The counital subalgebra $H_s$ is naturally a right $H$-comodule since the image of $\Delta|_{H_s}$ is a subspace of $H_s \otimes H$ (see Proposition~\ref{prop:Hs-facts}(iv)), and so $\Delta|_{H_s}$ can be viewed as a map $H_s \to H_s \otimes H$. By \cite[Theorem 3.1]{BCJ}, $H_s$ is the unit object of the monoidal category $\MH$.

\subsubsection{\texorpdfstring{The unit isomorphisms $l$ and $r$}{The unit isomorphisms}} \label{sec:unit-MH} By \cite[Section~3]{BCJ}, the monoidal category $\MH$ has unit isomorphisms:
$$l_M: H_s \tenbar M \to M, \quad \quad x \; \bar{\otimes} \; m = \varepsilon(x_{[1]}m_{[1]}) x_{[0]} \tenbar m_{[0]} ~~\mapsto~~ \varepsilon(x m_{[1]}) m_{[0]},$$

\vspace{-.2in}

$$r_M:  M \tenbar H_s \to M, \quad \quad m \; \bar{\otimes} \; x = \varepsilon(m_{[1]}x_{[1]}) m_{[0]} \tenbar x_{[0]} ~~\mapsto~~ \varepsilon(m_{[1]} x) m_{[0]},$$
for all $M \in \MH$. Moreover, the inverses of these maps are given as follows:
\[l_M^{-1}: M \to  H_s \tenbar M, \quad \quad m  ~~\mapsto~~  \varepsilon(1_2 m_{[1]})1_1  \tenbar  m_{[0]},\]

\vspace{-.2in}

\[r_M^{-1}: M \to  M \tenbar H_s, \quad \quad m  ~~\mapsto~~  m_{[0]} \tenbar \varepsilon(m_{[1]} 1_2)1_1.
\]

\subsection{\texorpdfstring{Functors between $\MH$ and other monoidal categories}{Functors between the corepresentation category and other monoidal categories}} \label{sec:MHfunctors}

In this section, we discuss the functors $U: \MH \to \Vec$, $\Phi:  \MH \to {}_{H_s} \mc{M}_{H_s}$, and $\Psi:  \MH \to {}^{H_s} \mc{M}^{H_s}$. We will see below that $U$ and $\Phi$ are monoidal, and also that $U$ and $\Psi$ are comonoidal.

\subsubsection{\texorpdfstring{The forgetful functor $U: \MH \to \Vec$}{The forgetful functor U}} \label{sec:U} To begin, we have by \cite[Theorem~3.2]{BCJ} that the forgetful functor $$U : \MH \to \Vec$$
is both a monoidal functor via
\begin{nalign} 
\label{eq:Ulower} U_{M,N}: M \otimes N &\overset{\eta_{M,N}}{\xrightarrow{\hspace*{1cm}}} M \tenbar N  &   &\text{and} &  U_0: \kk &\overset{u_{H_s}}{\xrightarrow{\hspace*{.7cm}}} H_s \\
m \otimes n &\mapsto \varepsilon(m_{[1]}n_{[1]}) m_{[0]}  \tenbar n_{[0]} & & & 1_{\kk} &\mapsto 1_H,
\end{nalign}
and a comonoidal functor via
\begin{nalign} 
\label{eq:Uupper} U^{M,N}: M \tenbar N &\overset{\iota_{M,N}}{\xrightarrow{\hspace*{1cm}}} M \otimes N & &\text{and}  &  U^0: H_s &\overset{\varepsilon|_{H_s}}{\xrightarrow{\hspace*{.7cm}}} \kk  \\
 \varepsilon(m_{[1]}n_{[1]}) m_{[0]}  \tenbar n_{[0]} &\mapsto  \varepsilon(m_{[1]}n_{[1]}) m_{[0]} \otimes n_{[0]} & & & x &\mapsto \ep(x).
\end{nalign}
Here, $\eta_{M,N}$ and $\iota_{M,N}$ are defined in Section~\ref{sec:otimes-MH}. 
In fact, by \cite[Section 6]{Sz}, $U$ is a Frobenius monoidal functor.
We also remark that 
\begin{equation}
\label{eq:UMNidentity}
    U_{M,N}U^{M,N} = \id_{M \bar{\otimes}N}.
\end{equation}

\subsubsection{\texorpdfstring{The functor $\Phi:  \MH \to {}_{H_s} \mc{M}_{H_s}$}{The functor Phi}} \label{sec:Phi} The monoidal structure of $U$ induces an $H_s$-bimodule action on any right $H$-comodule. Explicitly, if $M$ is a right $H$-comodule, then the left $H_s$-action $\mu_M$ and right $H_s$-action $\nu_M$ are given by, respectively,  
\begin{equation} \label{eq:Hs-act}
   \mu_M(x \otimes m)=:  \;x \triangleright m = \varepsilon(xm_{[1]})m_{[0]}, \quad \quad \nu_M(m \otimes x)=: \; m \triangleleft x = \varepsilon(m_{[1]}x)m_{[0]}, 
\end{equation}
for all $x \in H_s$, $m \in M$; this is consistent with Section~\ref{sec:unit-MH}. Moreover, these actions commute with right $H$-coaction in the following way: for all $x \in H_s, m \in M$,
\begin{equation} \label{eq:Hs-act-Hcom}
    (x \triangleright m)_{[0]} \otimes (x \triangleright m)_{[1]} = m_{[0]} \otimes x m_{[1]} \quad \text{and} \quad (m \triangleleft x)_{[0]} \otimes (m \triangleleft x)_{[1]} = m_{[0]} \otimes  m_{[1]}x.
\end{equation}
For a morphism $f:M\to N \in \MH$, define $\Phi(f) = f$. 
Indeed, $f$ is also an $H_s$-bimodule morphism as one can check that $f(x \triangleleft m) = x \triangleleft f(m)$ and $f(x \triangleright m) = x \triangleright f(m)$.

Next, note that as a monoidal functor, $U$ factors through the monoidal functor 
\[\Phi: \MH \to {}_{H_s} \mc{M}_{H_s}\quad \text{where} \quad
\begin{aligned} 
\Phi_{M,N}: M \otimes_{H_s} N &\to M \tenbar N\\
m \otimes_{H_s} n &\mapsto \varepsilon(m_{[1]}n_{[1]}) m_{[0]}\tenbar n_{[0]}.
\end{aligned}
\]
and $\Phi_0  = \id_{H_s}$. In fact, $\Phi_{M,N}$ is an isomorphism of $H$-comodules, with inverse
\begin{align*} 
\hspace{1in} \Phi_{M,N}^{-1}: M \tenbar N &\to M \otimes_{H_s} N\\
\varepsilon(m_{[1]}n_{[1]}) m_{[0]} \tenbar n_{[0]} &\mapsto \varepsilon(m_{[1]}n_{[1]}) m_{[0]} \otimes_{H_s} n_{[0]} = m \otimes_{H_s} n.
\end{align*}
Throughout, we use $\Phi_{M,N}$ and $\Phi_{M,N}\inv$ to identify $M \tenbar N$ with $M \otimes_{H_s} N$.

\subsubsection{\texorpdfstring{The functor $\Psi:  \MH \to {}^{H_s} \mc{M}^{H_s}$}{The functor Psi}} \label{sec:Psi} The comonoidal structure of $U$ induces an $H_s$-bicomodule action on any right $H$-comodule. Explicitly, if $M$ is a right $H$-comodule then the left $H_s$-coaction and right $H_s$-coaction on $M$ are given respectively by 
\begin{equation} \label{eq:Hs-coact}
    \lambda_{M}^s(m):= \varepsilon\big(m_{[1]}1_{1}\big)1_{2} \otimes m_{[0]},  \quad \rho_{M}^s(m):= m_{[0]} \otimes 1_{1}\varepsilon\big( m_{[1]}1_{2}\big) \quad \quad \forall m \in M.
\end{equation}
    As a comonoidal functor, $U$ factors through the comonoidal functor 
    \[
    \Psi: \MH \to {}^{H_s} \mc{M}^{H_s}\quad \text{where} \quad
\begin{aligned}  \Psi^{M,N}: M \tenbar N &\to M \otimes^{H_s} N\\ 
\varepsilon(m_{[1]}n_{[1]}) m_{[0]} \otimes n_{[0]} &\mapsto \varepsilon(m_{[1]}n_{[1]}) m_{[0]} \otimes^{H_s} n_{[0]}
\end{aligned}
\]
is the inclusion and $\Psi^0: H_s \to H_s$ is the identity map. Here, $M \otimes^{H_s} N$ is the cotensor product of $M$ and $N$ using the $H_s$-coactions given in \eqref{eq:Hs-coact} and \cref{def:eps}. In fact, $M \tenbar N$ and $M \otimes^{H_s} N$ are equal as subspaces of $M \otimes N$.


\section{Weak comodule algebras} \label{sec:algebras}

The goal of this section is to reconcile the two notions of comodule algebras over weak bialgebras available in the literature, which is a special case of \cite[Proposition~3.9]{BCM}.

\smallskip

For the remainder of the paper, fix $H = (H,m,u,\Delta,\varepsilon)$ a weak bialgebra, and set the notation $1:=1_H = u(1_\Bbbk).$

\begin{definition}[$\mathcal{A}^H =:\mathcal{A}$] \label{def:A} and \cite[Definition~2.1]{Bohm} \cite[Proposition~4.10]{C-DG}. Consider the {\it category $\mathcal{A}^H$ of right $H$-comodule algebras} defined as follows. The objects of $\mathcal{A}^H$ are algebras in~${\sf Vec}_\Bbbk$, 
$(A,~m_A: A \otimes A \to A, ~u_A: \Bbbk \to A),$
with $1_A:=u_A(1_\Bbbk)$, so that 
\begin{itemize}
\item the $\Bbbk$-vector space $A$ is a right $H$-comodule via $\rho_A: A \to A \otimes H, \; a \mapsto a_{[0]} \otimes a_{[1]},$ namely, the coassociativity and counit axioms hold (equations~\eqref{eq:Hcomod-1} and \eqref{eq:Hcomod-2});
\item the multiplication map $m_A$ is compatible with $\rho_A$ in the sense that 
\begin{eqnarray} 
\hspace{1.5in}    (ab)_{[0]} \otimes (ab)_{[1]} &=& a_{[0]} b_{[0]} \otimes a_{[1]} b_{[1]} \hspace{1.1in}  \forall a,b \in A; \label{eq:m-A-2}
\end{eqnarray}
\item the unit map $u_A$ is compatible with $\rho_A$ in the sense that  
\begin{eqnarray}
\rho_A(1_A) &\in& A \otimes H_t. \label{eq:u-A-1}
\end{eqnarray}
\end{itemize}
The morphisms of $\mathcal{A}^H$ are $\Bbbk$-linear algebra maps  that are also $H$-comodule maps. We write $\mathcal{A}$ for this category when $H$ is understood.
\end{definition}

Recall from Section~\ref{sec:corep}  the monoidal structure of the category $\MH$ of right $H$-comodules. Our main result is the following.

\begin{theorem} \label{thm:alg}
The category $\mathcal{A}^H=:\mathcal{A}$ of right $H$-comodule algebras is isomorphic to the category {\sf Alg}$(\MH)$ of algebras in the monoidal category $\MH$. 
\end{theorem}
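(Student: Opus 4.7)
The plan is to construct mutually inverse functors $F : \mathcal{A}^H \to {\sf Alg}(\mathcal{M}^H)$ and $G : {\sf Alg}(\mathcal{M}^H) \to \mathcal{A}^H$, each fixing the underlying $H$-comodule $A$ and acting as the identity on morphisms (a morphism in either category is exactly one that is simultaneously a $\kk$-algebra map and an $H$-comodule map).

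For $F$: given $(A, m_A, u_A) \in \mathcal{A}^H$, I equip the $H$-comodule $A$ with
\[
\tilde{m}_A \;=\; m_A \circ \iota_{A,A} : A \tenbar A \to A,
\qquad
\tilde{u}_A : H_s \to A, \quad x \mapsto x \triangleright 1_A = \varepsilon\bigl(x\,(1_A)_{[1]}\bigr)\,(1_A)_{[0]},
\]
using the inclusion $\iota_{A,A}$ from Section~\ref{sec:otimes-MH} and the left $H_s$-action of~\eqref{eq:Hs-act}. The needed checks are: (i) $\tilde m_A$ is a comodule morphism, which is immediate from~\eqref{eq:m-A-2} combined with the coaction formula~\eqref{eq:rhoMN} on $A \tenbar A$; (ii) $\tilde u_A$ is a comodule morphism, which uses~\eqref{eq:u-A-1} together with the identity $\Delta(x) = 1_1 \otimes x\,1_2$ for $x \in H_s$ (Proposition~\ref{prop:Hs-facts}(i)); (iii) associativity of $\tilde m_A$ in $\mathcal{M}^H$ is inherited from associativity of $m_A$ over $\kk$; and (iv) the unitality axioms $\tilde m_A(\tilde u_A \tenbar \id) = l_A$ and $\tilde m_A(\id \tenbar \tilde u_A) = r_A$ unpack, via the explicit forms of $l_A, r_A$ in Section~\ref{sec:unit-MH}, to the identities $(x \triangleright 1_A)\,a = x \triangleright a$ and $a\,(1_A \triangleleft x) = a \triangleleft x$ for $x \in H_s$, $a \in A$, which follow by direct calculation from~\eqref{eq:m-A-2}, \eqref{eq:u-A-1}, and the weak bialgebra identities~\eqref{eq:g}, \eqref{eq:Delta1}.

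For $G$: given $(A, \tilde m, \tilde u) \in {\sf Alg}(\mathcal{M}^H)$, I apply the monoidal functor $U : \mathcal{M}^H \to \Vec$ of Section~\ref{sec:U}. Proposition~\ref{prp:monfunctor}(i) then produces a $\kk$-algebra structure on $A$ with multiplication $m_A = \tilde m \circ \eta_{A,A}$ and unit $u_A(1_\kk) = \tilde u(1_H)$. The multiplicativity~\eqref{eq:m-A-2} follows from $\tilde m$ being an $H$-comodule map together with~\eqref{eq:rhoMN}, and~\eqref{eq:u-A-1} follows by computing $\rho_A(\tilde u(1_H))$ using that $\tilde u : H_s \to A$ is a comodule map and $\Delta(1_H) \in H_s \otimes H_t$ from~\eqref{eq:Delta1}, which places the result in $A \otimes H_t$.

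Finally, $GF = \id_{\mathcal{A}^H}$ and $FG = \id_{{\sf Alg}(\mathcal{M}^H)}$ are checked directly. For $GF$, the round-trip multiplication sends $a \otimes b$ to $\tilde m_A(\eta_{A,A}(a \otimes b)) = \varepsilon(a_{[1]} b_{[1]})\,a_{[0]} b_{[0]}$, which reduces to $ab$ by~\eqref{eq:m-A-2} followed by the counit axiom~\eqref{eq:Hcomod-2}; the round-trip unit is $1_H \triangleright 1_A$, which collapses to $1_A$ by the counit axiom. For $FG$, the round-trip multiplication equals $\tilde m \circ \eta_{A,A} \circ \iota_{A,A} = \tilde m$ since $\eta_{A,A} \circ \iota_{A,A} = \id_{A \tenbar A}$, while the round-trip unit recovers $\tilde u$ after invoking the unitality axiom of $(A, \tilde m, \tilde u)$ against the identity $\tilde u(1_H) = 1_A$. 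The main obstacle throughout is the passage between the coaction constraint~\eqref{eq:u-A-1} and the categorical unitality over the non-trivial unit object $H_s$: translating the formulaic condition $\rho_A(1_A) \in A \otimes H_t$ into the unit axioms for an algebra in $\mathcal{M}^H$ requires careful use of~\eqref{eq:Delta1}, \eqref{eq:Hs-act-Hcom}, and the explicit form of $\Delta|_{H_s}$ from Proposition~\ref{prop:Hs-facts}(i).
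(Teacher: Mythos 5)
Your proposal is correct and follows essentially the same route as the paper's proof: $F$ is built from $m_A \circ \iota_{A,A}$ together with the $H_s$-action applied to $1_A$, $G$ comes from the monoidal structure of the forgetful functor via Proposition~\ref{prp:monfunctor}(i), and the inverse checks rest on $\eta_{A,A}\,\iota_{A,A} = \id_{A \tenbar A}$ together with \eqref{eq:m-A-2} and \eqref{eq:Hcomod-2}, exactly as in the paper. The only cosmetic difference is that you define the unit $H_s \to A$ by the left action $x \triangleright 1_A$ where the paper uses the right action $1_A \triangleleft x$; these coincide because $\rho_A(1_A) \in A \otimes H_t$ and elements of $H_s$ and $H_t$ commute (Proposition~\ref{prop:Hs-facts}(iii)).
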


\begin{proof}
We achieve this result by constructing mutually inverse functors $F$ and $G$ below.

\vspace{-.1in}

\[
\begin{tikzcd}
\mathcal{A} \arrow[shift left]{rr}{F} & & {\sf Alg}(\MH) \arrow[shift left]{ll}{G}
\end{tikzcd}
\]

First, take $(A, ~m_A, ~u_A) \in \mathcal{A}$, and consider the assignment 
\[
\begin{array}{rl}
\medskip
 F(A,~ m_A, ~u_A) &:=
(A, \quad m_A \; U^{A,A}: A \tenbar A \to A, \quad \nu_A  (u_A \otimes \id_{H_s}): H_s \to A)\\
F(f) &:= f
\end{array}
\]
 for $f: (A, m_A, u_A) \to (B, m_B, u_B) \in \mathcal{A}$. 
 Here,  $U^{A,A}$, $\nu_A$ are given in \eqref{eq:Uupper}, \eqref{eq:Hs-act}, resp.

\begin{claim} \label{claim:Alg.F}
The assignment $F$ above yields a functor from $\mathcal{A}$ to ${\sf Alg}(\mathcal{M}^{H})$.
\end{claim}

\begin{proof}[Proof of Claim~\ref{claim:Alg.F}]
First, $A$ is a right $H$-comodule, by the definition of $\mathcal{A}$. 
Next, take $\bar{m}_A:=m_A \; U^{A,A}$ and we get, by the definition of $A \; \bar{\otimes} \; A$ (\S\ref{sec:otimes-MH}), that 
\[\bar{m}_A(a\; \bar{\otimes}\; b) = m_AU^{A,A}(a\; \bar{\otimes}\; b) =m_A(a \otimes b)=ab \]
for $a, b \in A$. Since
\[
\begin{array}{rl}
\rho_A \; \bar{m}_A (a\; \bar{\otimes}\; b) ~=~ \rho_A(ab) &~=~ (ab)_{[0]} \otimes (ab)_{[1]}\\
\medskip

&\overset{\textnormal{\eqref{eq:m-A-2}}}{=} a_{[0]}b_{[0]} \otimes a_{[1]}b_{[1]}\\
\medskip

&~= (\bar{m}_A \otimes \text{Id}_H)(a_{[0]} \; \bar{\otimes} \; b_{[0]} \otimes a_{[1]}b_{[1]})\\
\medskip

& \overset{\textnormal{\eqref{eq:rhoMN}}}{=} (\bar{m}_A \otimes \text{Id}_H)\rho_{A \bar{\otimes}A}(a \; \bar{\otimes} \; b),
\end{array}
\]
we also get that $\bar{m}_A$ is a map in $\mathcal{M}^{H}$. Moreover, $\bar{m}_A$ is associative (that is, \eqref{eq:algassoc} holds) because $m_A$ is associative. 

Now, take $\bar{u}_A:= \nu_A (u_A \otimes \id_{H_s})$ and $1_{H_s}=:1$. If $x \in H_s$, then 

\begin{align*}
\rho_A \; \bar{u}_A(x) \overset{\textnormal{\eqref{eq:Hs-act}}}{=} \rho_A(1_A \triangleleft x) \hspace{-.3in}
&\hspace{.3in} \overset{\textnormal{\eqref{eq:Hs-act-Hcom}}}{=} (1_A)_{[0]} \otimes (1_A)_{[1]}x\\
\medskip
&\hspace{.25in} \overset{\textnormal{\ref{def:eps}, \eqref{eq:u-A-1}}}{=} (1_A)_{[0]} \otimes \varepsilon_t((1_A)_{[1]})x\\
\medskip
&\hspace{-.08in}\overset{\textnormal{\ref{def:eps}, \ref{prop:Hs-facts}\text{(iii)}, \eqref{eq:Delta1}, \eqref{eq:u-A-1}}}{=} (1_A)_{[0]} \otimes \varepsilon((1_A)_{[1]}1_1)1_2x\\
\medskip
&\hspace{.13in} \overset{\textnormal{\ref{prop:Hs-facts}\text{(iii)}, \eqref{eq:Delta1}}}{=} \varepsilon((1_A)_{[1]}1_1)(1_A)_{[0]} \otimes x1_2\\
\medskip
&\hspace{.29in} \overset{\textnormal{\eqref{eq:Hs-act}}}{=} (1_A \triangleleft 1_1) \otimes x 1_2\\
\medskip
& \hspace{.38in} = (\bar{u}_A \otimes \text{Id}_H)(1_1 \otimes x 1_2)\\
\medskip
&\hspace{.3in} \overset{\textnormal{\ref{prop:Hs-facts}\text{(i)}}}{=} (\bar{u}_A \otimes \text{Id}_H) \Delta(x)\\
\medskip
& \hspace{.3in} \overset{\textnormal{\S\ref{sec:unitobj-MH}}}{=} (\bar{u}_A \otimes \text{Id}_H) \rho_{H_s}(x).
\end{align*}
So, $\bar{u}_A$ is a map in $\MH$. Moreover, if $x \in H_s$ and $a \in A$, then
\[
\begin{array}{rl}
\medskip
\bar{m}_A (\bar{u}_A \; \bar{\otimes} \; \text{Id}_A)(x \; \bar{\otimes} \; a) 
&\hspace{-.1in}\overset{\textnormal{\eqref{eq:Hs-act}}}{=}~ \bar{m}_A  ((1_A \triangleleft x) \; \bar{\otimes} \; a)
\overset{\textnormal{\eqref{eq:Hs-act}}}{=} \varepsilon((1_A)_{[1]}x) \; \bar{m}_A((1_A)_{[0]} \; \bar{\otimes} \; a)\\
\medskip
&\hspace{-.09in}\overset{\textnormal{\eqref{eq:tenbar}}}{=} \varepsilon((1_A)_{[1]} x)\;  m_A\left(\varepsilon((1_A)_{[0],[1]} a_{[1]}) \; (1_A)_{[0],[0]} \otimes a_{[0]}\right)\\
\medskip
&\hspace{-.22in} \overset{\textnormal{\ref{prop:Hs-facts}\textnormal{(iii)}, \eqref{eq:u-A-1}}}{=}~ \varepsilon(x (1_A)_{[1]})\; \varepsilon((1_A)_{[0],[1]} a_{[1]}) \; (1_A)_{[0],[0]} \; a_{[0]}\\
\medskip
&\hspace{-.05in} \overset{\textnormal{\eqref{eq:Hcomod-1}}}{=}~ \varepsilon(x (1_A)_{[1],2})\; \varepsilon((1_A)_{[1],1} a_{[1]}) (1_{A})_{[0]} \;  a_{[0]}\\
\medskip
&\hspace{-.1in}  \overset{\textnormal{\ref{def:wba}\text{(iv)}}}{=}~ \varepsilon(x (1_A)_{[1]} a_{[1]}) (1_{A})_{[0]}\;  a_{[0]}\\
\medskip
&\hspace{-.05in} \overset{\textnormal{\eqref{eq:m-A-2}}}{=}~ \varepsilon(x a_{[1]})  a_{[0]}
\\
\medskip
&\hspace{-.08in} \overset{\textnormal{\S\ref{sec:unit-MH}}}{=}~ l_A(x \; \bar{\otimes} \; a).
\end{array}
\]
With a similar computation for the right unit constraint $r_A$ in $\S$\ref{sec:unit-MH}, we conclude that the unit axiom \eqref{eq:algunital} holds for the map $\bar{u}_A$.  
Therefore, $F$ sends objects to objects.

To see that $F$ sends morphisms to morphisms, take $f: A \to B \in \mathcal{A}$, which means that $f m_A = m_B(f \otimes f)$ and $f u_A = u_B$. The map $f$ must also be a morphism of $H$-comodules, so $(f \otimes f)U^{A,A} = U^{A,A}(f \tenbar f)$. Now we get:
\[
\begin{array}{rll}
\medskip
F(f)\; m_{A} \; U^{A,A} 
&=~ f \; m_{A} \; U^{A,A}
&=~ m_{B} \; (f \otimes f) \; U^{A,A}\\
&=~ m_{B}\; U^{A,A} \;  (f \; \bar{\otimes} \; f)
&=~ m_{B} \; U^{A,A} \; (F(f) \;  \bar{\otimes}  \; F(f)).
\end{array}
\]
We also get $F(f) ( \nu_A (u_{A} \otimes \id_{H_s}))  = \nu_B (u_{B} \otimes \id_{H_s})$ by the following computation. Recall by $\S$\ref{sec:Phi} that a right $H$-comodule map $f$ must also be an $H_s$-bimodule map. So, for $x \in H_s$, 
\[
\begin{array}{rl}
\medskip
f\bar{u}_A(x)&=f (\nu_A (u_{A} \otimes \id_{H_s})(x)) 
~ \overset{\textnormal{\eqref{eq:Hs-act}}}{=} ~f(1_A \triangleleft x)\\
& \hspace{-.08in} \overset{\textnormal{\S\ref{sec:Phi}}}{=}f(1_A) \triangleleft x
~=~1_B \triangleleft x ~\overset{\textnormal{\eqref{eq:Hs-act}}}{=}~ \nu_B (u_{B} \otimes \id_{H_s})(x)~=~\bar{u}_B(x).
\end{array}
\]
This ends the proof of Claim~\ref{claim:Alg.F}.
\end{proof}

\medskip

Now, take $(A, ~\bar{m}_A: A \tenbar  A \to A,~~ \bar{u}_A: H_s \to A) \in {\sf Alg}(\MH)$, 
and consider the assignment
\[
\begin{array}{rl}
\medskip
\hspace{1in} G(A, ~\bar{m}_A, ~\bar{u}_A) &:=
(A, \quad \bar{m}_A \; U_{A,A}: A \otimes A \to A, \quad \bar{u}_A\; U_0: \Bbbk \to A),\\
G(g) &:= g,
\end{array}
\]
 for $g: (A, \bar{m}_A, \bar{u}_A) \to (B, \bar{m}_B, \bar{u}_B) \in {\sf Alg}(\MH)$. Here, $U_{A,A}$ and $U_0$ are given in \eqref{eq:Ulower}. 

\begin{claim} \label{claim:Alg.G}
The assignment $G$ above yields a functor from ${\sf Alg}(\MH)$ to $\mathcal{A}$.
\end{claim} 

\begin{proof}[Proof of Claim~\ref{claim:Alg.G}] Recall from \cref{sec:U} that the forgetful functor $U:\MH \to \Vec$ is monoidal, and that $G(A,~\bar{m}_A,~\bar{u}_A):=(U(A), U(\bar{m}_A)U_{A,A}, U(\bar{u}_A)U_0).$
By \cref{prp:monfunctor}(i), we have that $G(A,~\bar{m}_A,~\bar{u}_A) \in {\sf Alg}(\Vec)$.

It remains to show that $A$ is an $H$-comodule algebra in the sense of \cref{def:A}. First, we show that $\bar{m}_A\; U_{A,A}$ satisfies \eqref{eq:m-A-2}, i.e., that the following diagram commutes, where we use $\tau$ to denote the flip map in $\Vec$:

 \[
 \xymatrix@R=1.5pc@C=4pc{
 A \otimes A \ar[rr]^{U_{A, A}} 
 \ar[d]_{\rho_{A} \otimes \rho_{A}} 
 && A \tenbar A \ar[d]_{\rho_{A \hspace{-.01in} \tenbar \hspace{-.01in} A}} \ar[r]^{\bar{m}_A} 
 & A \ar[d]^{\rho_{A}}\\
 A \otimes H \otimes A \otimes H 
 \ar[r]_{\id \otimes \tau \otimes \id} 
 & A \otimes A \otimes H \otimes H \ar[r]_{U_{A,A} \otimes m}
 & A \tenbar A \otimes H 
 \ar[r]_{\bar{m}_A \otimes \id } 
 &  A \otimes H
 }
 \]
 
 \medskip

We know that the right square commutes since $\bar{m}_A$ is an $H$-comodule map. To show that the left pentagon commutes, let $a\otimes b \in A \otimes A.$ Then
\begin{align*}
\rho_{A\tenbar A}\; U_{A,A}(a \otimes b)&
\overset{\eqref{eq:Ulower}}{=} \rho_{A\tenbar A}(\ep(a_{[1]}b_{[1]})a_{[0]}\tenbar b_{[0]})\\
& \overset{\eqref{eq:rhoMN}}{=}\ep(a_{[1]}b_{[1]})a_{[0],[0]}\tenbar b_{[0],[0]} \otimes a_{[0],[1]}b_{[0],[1]} \\
&\overset{\eqref{eq:Hcomod-1}}{=} \ep(a_{[1],2}b_{[1],2})a_{[0]}\tenbar 
b_{[0]} \otimes a_{[1],1}b_{[1],1} \\
&\overset{\textnormal{\ref{def:wba}\ref{def:wba3}}}{=} a_{[0]}\tenbar 
b_{[0]} \otimes (a_{[1]}b_{[1]})_1 \ep((a_{[1]}b_{[1]})_2)\\
&\overset{\eqref{eq:coalgcounital}}{=} a_{[0]}\tenbar 
b_{[0]} \otimes a_{[1]}b_{[1]}.
\end{align*}
Similarly,
\begin{align*}
& (U_{A,A}\otimes m)(\id \otimes \tau \otimes \id)(\rho_{A} \otimes \rho_{A})(a \otimes b)\\
&\hspace{1in}=(U_{A,A}\otimes \id)(a_{[0]}\otimes b_{[0]} \otimes a_{[1]}b_{[1]}) \\
&\hspace{1in}\overset{\eqref{eq:Ulower}}{=}\ep(a_{[0],[1]}b_{[0],[1]})a_{[0],[0]}\tenbar b_{[0],[1]} \otimes a_{[1]}b_{[1]} \\
&\hspace{1in}\overset{\eqref{eq:Hcomod-1}}{=}\ep(a_{[1],1}b_{[1],1})a_{[0]} \tenbar b_{[0]} \otimes a_{[1],2}b_{[1],2} \\
&\hspace{1in}\overset{\textnormal{\ref{def:wba}\ref{def:wba3}}}{=} a_{[0]} \tenbar b_{[0]} \otimes \ep((a_{[1]}b_{[1]})_1)(a_{[1]}b_{[1]})_2 \\
&\hspace{1in}\overset{\eqref{eq:coalgcounital}}{=} a_{[0]}\tenbar 
b_{[0]} \otimes a_{[1]}b_{[1]}.
\end{align*}

Moreover, \eqref{eq:u-A-1} holds since
\[\rho_{A}(1_{A}) 
= \rho_A \bar{u}_A U_0 (1_{\kk})
\overset{\eqref{eq:Ulower}}{=} \rho_{A} \; \bar{u}_A(1_{H_s}) 
\overset{\bar{u}_A \in \MH}{=}
(\bar{u}_A \otimes \id_H)\; \rho_{H_s} (1) 
\overset{\textnormal{\S\ref{sec:unitobj-MH}}}{=} \bar{u}_A(1_{1}) \otimes 1_{2},
\]
the latter of which belongs to $A \otimes H_t$ by  \eqref{eq:Delta1}.
Therefore, $G$ sends objects to objects.

\smallskip

Now take $g: A \to B \in {\sf Alg}(\MH)$. Since $g$ is a map in ${\sf Alg}(\MH)$, by definition $G(g)=g$ is an $H$-comodule map. Thus a straightforward calculation yields $(g \tenbar g)U_{A,A}=U_{A,A}(g\otimes g).$ We also have that $g \bar{m}_A = \bar{m}_B(g \tenbar g)$ and $g \bar{u}_A = \bar{u}_B$.  So, we get:
\[
\begin{array}{rll}
\medskip
G(g)\; \bar{m}_A \; U_{A,A} &~=~ g \; \bar{m}_A \; U_{A,A}
&=~ \bar{m}_B\; (g \tenbar g) \; U_{A,A}\\
&=~ \bar{m}_B \; U_{A,A} \; (g \otimes g)
&=~ \bar{m}_B \; U_{A,A} \; (G(g) \otimes G(g)),
\end{array}
\]
and likewise $G(g) \; \bar{u}_A \; U_0 = \bar{u}_B\; U_0$. Therefore, $G(g)$ is an algebra map in $\Vec$. Functoriality is clear by the definition of $G$. Hence, $G$ also sends morphisms to morphisms, and this ends the proof of Claim~\ref{claim:Alg.G}.
\end{proof}

\begin{claim}
 \label{claim:Alginverse}
 $F$ and $G$ are mutually inverse functors.
\end{claim}

\begin{proof}
We will show that $FG$ (resp., $GF$) is the identity functor on ${\sf Alg}(\MH)$ (resp., $\mathcal{A}$) as follows. First, note that
$FG(A,~\bar{m}_A, ~\bar{u}_A) = (A, \; \bar{m}_A\; U_{A,A} \;U^{A,A}, \; \nu_{A}(\bar{u}_A \; U_0 \otimes \text{Id}_{H_s})).$
By \eqref{eq:UMNidentity}, it is clear that $\bar{m}_A\; U_{A,A} \;U^{A,A} = \bar{m}_A$ as morphisms in $\MH$. 
Recall from Section~\ref{sec:Phi} that a morphism of right $H$-comodules is a morphism of $H_s$-bimodules. Hence, for $x \in H_s$
\[ \nu_{A} (\bar{u}_A \; U_0 \otimes \text{Id}_{H_s})(1_\kk \otimes x)  \overset{\eqref{eq:Ulower}}{=}  \nu_{A} (\bar{u}_A(1) \otimes x)  \overset{\eqref{eq:Hs-act}}{=} \bar{u}_A(1) \triangleleft x = \bar{u}_A(1 \triangleleft x) = \bar{u}_A(x).
\]
So, $\nu_{A}(\bar{u}_A \; U_0 \otimes \text{Id}_{H_s}) = \bar{u}_A$ as morphisms in ${\sf Alg}(\MH)$ as well. 

On the other hand, 
$GF(A,~m_{A}, ~u_{A}) = (A, \; m_{A}\;U^{A,A} \; U_{A,A}, \; \nu_{A}(u_{A}\otimes \text{Id}_{H_s}) U_0).$
We have that  $m_{A}\;U^{A,A} \; U_{A,A} = m_{A}$ as $\kk$-linear maps by the following computation for $a \otimes b \in A \otimes A$:
\[
\begin{array}{rll}
m_{A}\;U^{A,A} \; U_{A,A} (a \otimes b) & \hspace{-.1in}\overset{\eqref{eq:Uupper}}{=}m_AU_{A,A}(\ep(a_{[1]}b_{[1]})a_{[0]}\tenbar b_{[0]}) 
&  \overset{\eqref{eq:Ulower}}{=}m_A(\ep(a_{[1]}b_{[1]})a_{[0]}\otimes b_{[0]}) \\
&=\ep(a_{[1]}b_{[1]})a_{[0]}b_{[0]} 
&\overset{\eqref{eq:m-A-2}}{=}\ep((ab)_{[1]})(ab)_{[0]} \\
&\hspace{-.1in} \overset{\eqref{eq:Hcomod-2}}{=}m_A(a\otimes b). 
\end{array}
\]
We also get $\nu_A(u_{A}\otimes \text{Id}_{H_s}) U_0 = u_A$ due to the calculation below:
$$ \nu_{A}(u_{A}\otimes \text{Id}_{H_s}) U_0(1_\kk)
~=~ \nu_{A}(u_{A}(1_\kk) \otimes 1)
\overset{\eqref{eq:Hs-act}}{=} \varepsilon(u_A(1_\kk)_{[1]})u_A(1_\kk)_{[0]}
\overset{\eqref{eq:Hcomod-2}}{=} u_A(1_\kk).$$
Moreover, $FG(g) = g$ for any $g \in {\sf Alg}(\MH)$, and $GF(f) = f$ for any $f \in \mathcal{A}$.
\end{proof}

Therefore, the functors $F$ and $G$ (defined, respectively, in Claims~\ref{claim:Alg.F} and~~\ref{claim:Alg.G}) yield an isomorphism between the categories $\mathcal{A}:=\mathcal{A}^H$ and ${\sf Alg}(\MH)$.
\end{proof}

Now consider the following examples.

\begin{example} \label{ex:alg-Hs}
The counital subalgebra $H_s$ is  an object of $\mathcal{A}^H \cong {\sf Alg}(\MH)$, as the unit object of a monoidal category is an algebra  in the category; see Section~\ref{sec:unitobj-MH}.
\end{example}

\begin{example} \label{ex:alg-kQ}
Let $Q$ be a finite quiver. let $Q_{\ell}$ denote the paths of length $\ell$ in $Q$, write $Q_0 = \{1, \dots, n\}$, and let $e_i$ denote the trivial path at vertex $i$. Let $\kk Q$ be the path algebra of $Q$ and let $\hay$ be Hayashi's face algebra as defined in Example~\ref{ex:hay}. Recall that $\hay_t$ has $\kk$-basis $\{ \sum_{i=1}^n x_{j,i} \mid 1 \leq j \leq n\}$. In \cite[Equation 2.15]{Hayashi99b}, Hayashi notes that $\kk Q$ is a right $\hay$-comodule.
For a path $p \in Q_{\ell}$, the right coaction is given by 
\[
\rho_{\kk Q}: \kk Q \to \kk Q \otimes \mfH(Q),  \quad
p  \mapsto \textstyle\sum_{q \in Q_{\ell}} q \otimes x_{q,p}.
\]
Indeed, for a path $p \in Q_\ell$:
\[ (\id \otimes \Delta)\rho_{\kk Q}(p) = (\id \otimes \Delta)\left(\textstyle\sum_{q \in Q_\ell} q \otimes x_{q,p}\right) = \textstyle\sum_{q,r \in Q_\ell} q \otimes x_{q,r} \otimes x_{r,p}
\]
while
\[ (\rho_{\kk Q} \otimes \id)\rho_{\kk Q}(p) = (\rho_{\kk Q} \otimes \id)\left(\textstyle\sum_{q \in Q_\ell} q \otimes x_{q,p}\right) = \textstyle\sum_{r, q \in Q_\ell} r \otimes x_{r,q} \otimes x_{q, p}.
\]
So, $\rho_{\kk Q}$ is coassociative. To see that $\rho_{\kk Q}$ is counital, observe that
\[ (\id \otimes \ep)\rho_{\kk Q}(p) = (\id \otimes \ep) \left(\textstyle\sum_{q \in Q_\ell} q \otimes x_{q,p}\right) = p.
\]

In fact, $\kk Q$ is a right $\hay$-comodule algebra. First note that 
\[\rho_{\kk Q}(1_{\kk Q}) = \rho_{\kk Q} \left( \textstyle\sum_{i=1}^n e_i \right) = \textstyle\sum_{i,j=1}^n  e_j \otimes x_{j,i} = \textstyle\sum_{j=1}^n e_j \otimes \left(\sum_{i=1}^n x_{j,i}\right)
\]
and so $\rho_{\kk Q}(1_{\kk Q}) \in \kk Q \otimes \hay_t$.
Now let $p \in Q_{\ell}$ and $q \in Q_{m}$. Then if $p$ and $q$ are composable,
\begin{align*}\rho_{\kk Q}(pq) = \textstyle \sum_{r \in Q_{\ell +m}} r \otimes x_{r,pq} = \left(\textstyle\sum_{s \in Q_{\ell}} s \otimes x_{s,p}\right) \left( \textstyle\sum_{t \in Q_{m}} t \otimes x_{t,q} \right) = \rho_{\kk Q}(p) \rho_{\kk Q}(q),
\end{align*}
since each path $r \in Q_{\ell +m}$ can be written uniquely as a composition $st$ where $s \in Q_{\ell}$ and $t \in Q_m$.
If $p$ and $q$ are not composable, then
\[\rho_{\kk Q}(p) \rho_{\kk Q}(q)
= \textstyle \sum_{s \in Q_\ell, t \in Q_m} st \otimes x_{s,p} x_{t,q} = 0= \rho_{\kk Q}(pq).
\]
Therefore, $\kk Q$ and $\hay$ satisfy Definition~\ref{def:A}, so $\kk Q$ is an $\hay$-comodule algebra.
\end{example}


\section{Weak comodule coalgebras} \label{sec:coalgebras}

The goal of this section is to dualize the main result of the previous section: \cref{thm:alg}. That result pertained to comodule algebras over weak bialgebras, and here we consider comodule coalgebras over weak bialgebras. 
We then  prove the analogue of \cref{thm:alg} for these structures.

\begin{definition}[$\mathcal{C}^H =: \mc{C}$] \label{def:C}(\cite[Def.~1]{Jia}, \cite[Def.~2.1]{VZ};  \cite[Prop~3.12]{AGFLV}.) Consider the {\it category $\mathcal{C}^H$ of right $H$-comodule coalgebras} defined as follows. The objects of $\mathcal{C}^H$ are coalgebras in~${\sf Vec}_\Bbbk$, 
$(C,~\Delta_C: C \to C \otimes C , ~\ep_C: C \to \Bbbk),$
so that 
\begin{itemize}
\item the $\Bbbk$-vector space $C$ is a right $H$-comodule via $\rho_C: C \to C \otimes H, \; c \mapsto c_{[0]} \otimes c_{[1]},$ namely, the coassociativity and counit axioms hold (Eqs. \eqref{eq:Hcomod-1}, \eqref{eq:Hcomod-2});

\item the comultiplication map $\Delta_C$ is compatible with $\rho_C$ and $\rho_{C \otimes C}$ in the sense that 
\begin{eqnarray} 
\hspace{1.5in}    c_{1,[0]}\otimes c_{2,[0]} \otimes c_{1,[1]}c_{2,[1]}=c_{[0],1}\otimes c_{[0],2} \otimes c_{[1]} \hspace{0.8in}  \forall c \in C; \label{eq:delta-C}
\end{eqnarray}

\item the counit $\ep_C$ is compatible with $\rho_C$ in the sense that
\begin{eqnarray} 
\hspace{2in}    \ep_C(c_{[0]})c_{[1]}=\ep_C(c_{[0]})\eps (c_{[1]}) \hspace{1.3in}  \forall c \in C. \label{eq:ep-C}
\end{eqnarray}
\end{itemize}
The morphisms of $\CC^H$ are $\Bbbk$-linear coalgebra maps that are also $H$-comodule maps. In other words, a $\kk$-linear map $f: C \to D$ between $C, D \in \CC^H$ is a morphism in $\CC^H$ if
\begin{align}
   \label{eq:comodmap}
   f(c_{[0]}) \otimes c_{[1]}&=f(c)_{[0]} \otimes f(c)_{[1]}, \hspace{1.1in} \forall c \in C,\\
     \hspace{1.45in}  \label{eq:coalgmap-delta}f(c_1) \otimes f(c_2)& =f(c)_1 \otimes f(c)_{2} \hspace{1.3in} \forall c \in C,\\
  \label{eq:coalgmap-ep}\ep_C&=\ep_{D}f.
\end{align}
We write $\CC$ for this category when $H$ is understood.
\end{definition}

\begin{theorem} \label{thm:coalg}
Fix a weak bialgebra $H$. Then the category $\CC^H =: \mathcal{C}$ of right $H$-comodule coalgebras is isomorphic to the category {\sf Coalg}$(\MH)$ of coalgebras in $\MH$. 
\end{theorem}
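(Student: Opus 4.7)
The plan is to mirror the proof of \cref{thm:alg} by dualizing systematically: where the algebra proof exploits that the forgetful functor $U:\mathcal{M}^H \to \Vec$ is monoidal (yielding the $H_s$-action and the map $U_{M,N}:M\otimes N \to M\tenbar N$), the coalgebra proof should exploit that $U$ is comonoidal (yielding the $H_s$-coaction and the inclusion $U^{M,N}:M\tenbar N \to M\otimes N$). I will construct mutually inverse functors
\[
\begin{tikzcd}
\mathcal{C}^H \arrow[bend left=30]{rr}{F} & & {\sf Coalg}(\mathcal{M}^H) \arrow[bend left=30]{ll}{G}
\end{tikzcd}
\]
and check all the constraints explicitly.

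\textbf{The functor $G$.} Since $U$ is comonoidal, \cref{prp:monfunctor}\ref{prp:laxcomon} furnishes a $\kk$-coalgebra out of any coalgebra in $\mathcal{M}^H$, so I set $G(C,\bar{\Delta}_C,\bar{\ep}_C) := (C, U^{C,C}\bar{\Delta}_C, U^0\bar{\ep}_C)$ and $G(g) := g$. The novel content is checking the two compatibilities: relation \eqref{eq:delta-C} follows by comparing the two sides of $\rho_{C\tenbar C}\bar{\Delta}_C = (\bar{\Delta}_C\otimes \id_H)\rho_C$ (which holds because $\bar{\Delta}_C$ is a morphism in $\mathcal{M}^H$) after applying $\iota_{C,C}=U^{C,C}$, and relation \eqref{eq:ep-C} follows because $\bar{\ep}_C$ lands in $H_s$, together with \cref{prop:Hs-facts}\ref{prop:Hs-facts-1} applied to $\bar{\ep}_C(c)\in H_s$.

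\textbf{The functor $F$.} I set $\bar{\Delta}_C := U_{C,C}\Delta_C = \eta_{C,C}\Delta_C$ and $\bar{\ep}_C(c) := \ep_C(c_{[0]})\,\ep_s(c_{[1]})$; note that by \eqref{eq:ep-C} this equals $\ep_C(c_{[0]})c_{[1]}$ inside $H_s$, which is the dual of the assignment $x\mapsto 1_A\triangleleft x$ used in \cref{thm:alg} (with the $H_s$-coaction \eqref{eq:Hs-coact} replacing the $H_s$-action). The verifications run in parallel to Claim~4.3:
\begin{enumerate}[label=(\roman*)]
\item $\Delta_C(c)$ actually lies in $C\tenbar C$ as a subspace of $C\otimes C$: applying $\ep$ to the last tensor factor of \eqref{eq:delta-C} and using the counit axiom for $\rho_C$ gives exactly $\ep(c_{1,[1]}c_{2,[1]})c_{1,[0]}\otimes c_{2,[0]} = c_1\otimes c_2$.
\item $\bar{\Delta}_C$ and $\bar{\ep}_C$ are morphisms in $\mathcal{M}^H$: for $\bar{\Delta}_C$, use \eqref{eq:delta-C} with the explicit $H$-coaction \eqref{eq:rhoMN} on $C\tenbar C$; for $\bar{\ep}_C$, use that the coaction on $H_s$ is $\Delta|_{H_s}$ (\S\ref{sec:unitobj-MH}) together with \cref{prop:Hs-facts}\ref{prop:Hs-facts-1}.
\item Coassociativity \eqref{eq:coalgcoassoc} in $\mathcal{M}^H$ is essentially free because $\alpha = \alpha_{\Vec}$, so it reduces to coassociativity of $\Delta_C$ after tracking $\eta$'s; counitality \eqref{eq:coalgcounital} requires unpacking $l_C^{-1}$, $r_C^{-1}$ from \S\ref{sec:unit-MH}, applying $\bar{\ep}_C\tenbar \id_C$ to $\bar{\Delta}_C(c)$, and simplifying using \eqref{eq:delta-C} and weak counit multiplicativity \ref{def:wba}\ref{def:wba4}.
\end{enumerate}

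\textbf{Mutual inverseness.} The equality $GF = \id_{\mathcal{C}^H}$ reduces on comultiplications to $U^{C,C}U_{C,C}\Delta_C = \Delta_C$, which holds because $\Delta_C$ factors through $C\tenbar C$ by (i); on counits it reduces to $\ep(\ep_s(h)) = \ep(h)$ together with counitality of $\rho_C$. The equality $FG = \id_{{\sf Coalg}(\mathcal{M}^H)}$ reduces on comultiplications to \eqref{eq:UMNidentity}, and on counits to showing that reconstructing $\bar{\ep}_C$ from $U^0\bar{\ep}_C$ via the $H_s$-coaction recovers $\bar{\ep}_C$; this uses that $\bar{\ep}_C(c)\in H_s$ is $H_s$-bicomodule-invariant in the appropriate sense coming from \S\ref{sec:Psi}. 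The main obstacle I anticipate is the counitality verification in step (iii), where correctly matching $\bar{\ep}_C\tenbar\id$ applied to $\bar{\Delta}_C$ with the explicit formula for $l_C^{-1}$ requires a careful Sweedler calculation combining \eqref{eq:delta-C}, the definition of $\ep_s$, and identity \ref{def:wba}\ref{def:wba4}; this is the dual analogue of the delicate step proving $\bar{m}_A(\bar{u}_A\tenbar \id_A) = l_A$ in \cref{thm:alg}.
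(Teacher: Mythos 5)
Your proposal is correct and follows essentially the same route as the paper: you construct the same pair of mutually inverse functors, with $F(C,\Delta_C,\ep_C)=(C,\,U_{C,C}\Delta_C,\,(\ep_C\otimes\id)\rho_C^s)$ (your formula $\bar\ep_C(c)=\ep_C(c_{[0]})\eps(c_{[1]})$ is exactly this) and $G(C,\bar\Delta_C,\bar\ep_C)=(C,\,U^{C,C}\bar\Delta_C,\,U^0\bar\ep_C)$, and your outlined verifications — including the observation that \eqref{eq:delta-C} plus counitality forces $\Delta_C(c)\in C\tenbar C$, and the identification of the counitality check as the delicate step — match the paper's Claims on the nose. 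The only cosmetic difference is that for the counit identities the paper leans on \eqref{eq:a} (i.e.\ $\eps|_{H_s}=\id$) and the fact that $\bar\ep_C$ is an $H$-comodule map, rather than \cref{prop:Hs-facts}\ref{prop:Hs-facts-1}, but the argument is the same.
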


\begin{proof} We achieve this result by constructing mutually inverse functors $F$ and $G$ below.

\vspace{-.1in}

\[
\begin{tikzcd}
\mathcal{C} \arrow[shift left]{rr}{F} & & {\sf Coalg}(\MH) \arrow[shift left]{ll}{G}
\end{tikzcd}
\]

First, take $(C, ~\Delta_C, ~\ep_C) \in \mathcal{C}$, and consider the assignment
\[
\begin{array}{rl}
\medskip
 F(C, ~\Delta_C, ~\ep_C) &:=
(C, \quad U_{C,C} \:\Delta_C: C \to C \tenbar C, \quad (\ep_C \otimes \id)\rho_{C}^s: C \to H_s)\\
F(f) &:= f
\end{array}
\]
 for $f: (C, \Delta_C, \ep_C) \to (D, \Delta_D, \ep_D) \in \mathcal{C}$. Here, $U_{C,C}$ and $\rho_C^s$ are given in Equations~\ref{eq:Ulower} and~\ref{eq:Hs-coact}, respectively.
 
 \begin{claim}
 \label{claim:Coalg.F}
 The assignment $F$ above yields a functor from $\CC$ to ${\sf Coalg}(\MH)$.
 \end{claim}
 
 \begin{proof}
 First we will show that $F$ takes objects of $\CC$ to objects of ${\sf Coalg}(\MH)$. Let $(C, \Delta_C, \ep_C)$ be a right $H$-comodule coalgebra. Define $$\bar{\Delta}_C:=U_{C,C}\:\Delta_C \qquad  \text{ and } \qquad \bar{\ep}_C:=(\ep_C \otimes \id)\rho_C^s.$$ We will show that $(C, \bar{\Delta}_C, \bar{\ep}_C)$ is an object of ${\sf Coalg}(\MH)$, i.e.,  that $\bar{\Delta}_C, \bar{\ep}_C$ are $H$-comodule maps, and that $\bar{\Delta}_C$ is coassociative and counital (with respect to $\bar{\ep}_C$).
 
First, we will show that $\bar{\ep}_C$ is an $H$-comodule map.
By \cref{sec:unitobj-MH}, we have $\rho_{H_s}=\Delta |_{H_s}$. For $c \in C$ we have
\begin{align*}
    \rho_{H_s}\bar{\ep}_C(c) &=\Delta |_{H_s}\bar{\ep}_C(c)& 
    &\overset{\textnormal{\ref{def:eps},\eqref{eq:Hs-coact}}}{=} \Delta |_{H_s}(\ep_C(c_{[0]})\eps(c_{[1]}))\\
    &\overset{\eqref{eq:ep-C}}{=} \Delta |_{H_s}(\ep_C(c_{[0]})c_{[1]})&
    &\overset{\eqref{eq:Hcomod-1}}{=} \ep_C(c_{[0],[0]})c_{[0],[1]} \otimes c_{[1]}\\
    &\overset{\eqref{eq:ep-C}}{=} \ep_C(c_{[0],[0]})\eps(c_{[0],[1]}) \otimes c_{[1]} \quad \text{ (applied to }c_{[0]})&
    &\overset{\textnormal{\ref{def:eps},\eqref{eq:Hs-coact}}}{=}(\bar{\ep}_C \otimes \id)(c_{[0]}\otimes c_{[1]})\\
    &=(\bar{\ep}_C \otimes \id)\rho_C(c).
\end{align*}

Next we will show that $\bar{\Delta}_C$ is an $H$-comodule map. 
Let $c \in C$. Then we have
\begin{align*}
    (\bar{\Delta}_C\otimes \id)\rho_C(c) &=(\bar{\Delta}_C\otimes \id)(c_{[0]}\otimes c_{[1]}) &
    & \\
    &=U_{C,C}{\Delta}_C(c_{[0]})\otimes c_{[1]} \\
    &=U_{C,C}(c_{[0],1}\otimes c_{[0],2})\otimes c_{[1]} \\
    &\overset{\eqref{eq:Ulower}}{=}\ep(c_{[0],1,[1]}c_{[0],2,[1]})c_{[0],1,[0]}\tenbar c_{[0],2,[0]}\otimes c_{[1]}\\
    &\overset{\eqref{eq:delta-C}}{=}\ep(c_{[0],[1]})c_{[0],[0],1}\tenbar c_{[0],[0],2}\otimes c_{[1]} \quad \text{ (applied to }c_{[0]})\\
    &\overset{\eqref{eq:Hcomod-1}}{=}\ep(c_{[1],1})c_{[0],1}\tenbar c_{[0],2}\otimes c_{[1],2} \\
    &\overset{\eqref{eq:coalgcounital}}{=}c_{[0],1}\tenbar c_{[0],2}\otimes c_{[1]}. 
\end{align*}
In the last equation, \eqref{eq:coalgcounital} is applied to $c_{[1]} \in H$, for $H \in {\sf Coalg}(\Vec)$. Similarly,
\begin{align*}
    \rho_{C\tenbar C}\bar{\Delta}_C(c)
    &=\rho_{C\tenbar C}U_{C,C}\Delta_C(c) \\
    &\overset{\eqref{eq:Ulower}}{=}\rho_{C\tenbar C}(\ep(c_{1,[1]}c_{2,[1]})c_{1,[0]}\tenbar c_{2,[0]})\\
    &\overset{\eqref{eq:rhoMN}}{=}\ep(c_{1,[1]}c_{2,[1]})c_{1,[0],[0]}\tenbar c_{2,[0],[0]} \otimes c_{1,[0],[1]}c_{2,[0],[1]}\\
    &\overset{\eqref{eq:Hcomod-1}}{=}\ep(c_{1,[1],2}c_{2,[1]})c_{1,[0]}\tenbar c_{2,[0],[0]} \otimes c_{1,[1],1}c_{2,[0],[1]} \quad \text{ (applied to }c_1)\\
    &\overset{\eqref{eq:Hcomod-1}}{=}\ep(c_{1,[1],2}c_{2,[1],2})c_{1,[0]}\tenbar c_{2,[0]} \otimes c_{1,[1],1}c_{2,[1],1} \quad \text{ (applied to }c_2)\\
    &\overset{\textnormal{\ref{def:wba}\ref{def:wba3}}}{=}\ep((c_{1,[1]}c_{2,[1]})_2)c_{1,[0]}\tenbar c_{2,[0]} \otimes (c_{1,[1]}c_{2,[1]})_1 \\
    &\overset{\eqref{eq:coalgcounital}}{=}c_{1,[0]}\tenbar c_{2,[0]} \otimes c_{1,[1]}c_{2,[1]} \\
    &\overset{\eqref{eq:delta-C}}{=} c_{[0],1}\tenbar c_{[0],2} \otimes c_{[1]},
\end{align*}
so the diagram commutes.
 
Now we will show that $\bar{\Delta}_C$ is counital, i.e., that the following diagram commutes, where $l_C,r_C$ are the left and right unit isomorphisms defined in \cref{sec:unit-MH}:
 \[
 \begin{tikzcd}
 H_s \tenbar C  &  C \arrow[swap]{l}{l_C^{-1}} \arrow{r}{r_C^{-1}}\arrow{d}{\bar{\Delta}_C} & C \tenbar H_s  \\
 {} & C \tenbar C \arrow{lu}{\bar{\ep}_C \tenbar \id}\arrow[swap]{ru}{\id \tenbar \bar{\ep}_C} & {}
 \end{tikzcd}
 \]
We will give a proof that the right hand side of the diagram commutes; the left is similar. Let $c\in C$. We will show that $r_C(\id \tenbar \bar{\ep}_C)\bar{\Delta}_C(c) = c.$
 \begin{align*}
r_C(\id \tenbar \bar{\ep}_C)\bar{\Delta}_C(c) &=r_C(\id \tenbar \bar{\ep}_C)U_{C,C}\Delta_C(c)\\  
&\overset{\eqref{eq:Ulower}}{=}r_C(\id \tenbar \bar{\ep}_C)(\ep(c_{1,[1]}c_{2,[1]})c_{1,[0]} \tenbar c_{2,[0]}) \\
&\overset{\eqref{eq:Hs-coact}, \textnormal{\ref{def:eps}}}{=}r_C\left(\ep(c_{1,[1]}c_{2,[1]})c_{1,[0]} \tenbar \eps(c_{2,[0],[1]})\ep_C(c_{2,[0],[0]})\right) \\
&\overset{\eqref{eq:ep-C}}{=} r_C\left(\ep(c_{1,[1]}c_{2,[1]})c_{1,[0]} \tenbar c_{2,[0],[1]}\ep_C(c_{2,[0],[0]})\right) \quad \text{ (applied to }c_{2,[0]}) \\
&\overset{\eqref{eq:Hcomod-1}}{=}r_C\left(\ep_C(c_{2,[0]})\ep(c_{1,[1]}c_{2,[1],2})c_{1,[0]} \tenbar c_{2,[1],1}) \quad \text{ (applied to }c_2\right)\\
&\overset{*}{=}r_C\left(\ep_C(c_{2,[0]})\ep(c_{1,[1]}\eps(c_{2,[1]})_2)c_{1,[0]} \tenbar \eps(c_{2,[1]})_1\right) \\
&\overset{\textnormal{\ref{prop:Hs-facts}\textnormal{(i)}}}{=} r_C\left(\ep_C(c_{2,[0]})\ep(c_{1,[1]}\eps(c_{2,[1]})1_2)c_{1,[0]} \tenbar 1_1\right) \quad  \\
&\overset{\textnormal{\ref{def:eps}}}{=}r_C\left(\ep_C(c_{2,[0]})c_{1,[0]} \tenbar  \eps(c_{1,[1]}\eps(c_{2,[1]}))\right)\\
&\overset{\eqref{eq:ep-C}}{=}r_C\left(\ep_C(c_{2,[0]})c_{1,[0]} \tenbar  \eps(c_{1,[1]}c_{2,[1]})\right) \quad \text{ (applied to }c_2)\\
&\overset{\eqref{eq:delta-C}}{=}r_C\left(\ep_C(c_{[0],2})c_{[0],1} \tenbar  \eps(c_{[1]})\right)\\
&\overset{\eqref{eq:coalgcounital}}{=}r_C(c_{[0]}\tenbar \eps(c_{[1]}))\\
&\overset{\textnormal{\S\ref{sec:unit-MH}}}{=}\ep(c_{[0],[1]}\eps(c_{[1]}))c_{[0],[0]} \\
&\overset{\eqref{eq:Hcomod-1}}{=}\ep(c_{[1],1}\eps(c_{[1],2}))c_{[0]} \\
&\overset{\eqref{eq:g}}{=}\ep(c_{[1]})c_{[0]} \\
&\overset{\eqref{eq:Hcomod-2}}{=}c. 
\end{align*} 
 To justify $*$:
 \begin{align*}
     c_1 \otimes \ep_C(c_{2,[0]})c_{2,[1],1} \otimes c_{2,[1],2} 
     &=c_1 \otimes\Delta(\ep_C(c_{2,[0]})c_{2,[1]}) \\
     &\overset{\eqref{eq:ep-C}}{=}c_1 \otimes\Delta(\ep_C(c_{2,[0]})\eps(c_{2,[1]})) \\
     &=c_1 \otimes \ep_C(c_{2,[0]})\eps(c_{2,[1]})_1 \otimes \eps(c_{2,[1]})_2.
 \end{align*}
 
Finally, we will show that $\bar{\Delta}_C$ is coassociative. 
Take $c \in C$, and compute:
 \begin{align*}
     & (\id \tenbar \bar{\Delta}_C)\bar{\Delta}_C (c)\\
     &=(\id \tenbar \bar{\Delta}_C)U_{C,C}\Delta_C(c) \\
     &\overset{\eqref{eq:Ulower}}{=} (\id \tenbar \bar{\Delta}_C)(\ep(c_{1,[1]}c_{2,[1]})c_{1,[0]}\tenbar c_{2,[0]}) \\
     &=(\id \tenbar U_{C,C}\Delta_C)(\ep(c_{1,[1]}c_{2,[1]})c_{1,[0]}\tenbar c_{2,[0]}) \\
     &=\ep(c_{1,[1]}c_{2,[1]})c_{1,[0]}\tenbar U_{C,C}(c_{2,[0],1} \otimes c_{2,[0],2}) \\
     &\overset{\eqref{eq:Ulower}}{=}\ep(c_{1,[1]}c_{2,[1]})c_{1,[0]}\tenbar \ep(c_{2,[0],1,[1]}c_{2,[0],2,[1]})c_{2,[0],1,[0]} \tenbar c_{2,[0],2,[0]} \\
     &=\ep(c_{1,[1]}c_{2,[1]})\ep(c_{2,[0],1,[1]}c_{2,[0],2,[1]})c_{1,[0]}\tenbar c_{2,[0],1,[0]} \tenbar c_{2,[0],2,[0]} \\
     &\overset{\eqref{eq:delta-C}}{=}\ep(c_{1,[1]}c_{2,1,[1]}c_{2,2,[1]})\ep(c_{2,1,[0],[1]}c_{2,2,[0],[1]})c_{1,[0]}\tenbar c_{2,1,[0],[0]} \tenbar c_{2,2,[0],[0]} \text{ (applied to }c_2)\\
     &\overset{\eqref{eq:coalgcoassoc}}{=}\ep(c_{1,[1]}c_{2,[1]}c_{3,[1]})\ep(c_{2,[0],[1]}c_{3,[0],[1]})c_{1,[0]}\tenbar c_{2,[0],[0]} \tenbar c_{3,[0],[0]} \\
     &\overset{\eqref{eq:Hcomod-1}}{=}\ep(c_{1,[1]}c_{2,[1],2}c_{3,[1]})\ep(c_{2,[1],1}c_{3,[0],[1]})c_{1,[0]}\tenbar c_{2,[0]} \tenbar c_{3,[0],[0]}\quad  \text{ (applied to }c_2)\\
     &\overset{\eqref{eq:Hcomod-1}}{=}\ep(c_{1,[1]}c_{2,[1],2}c_{3,[1],2})\ep(c_{2,[1],1}c_{3,[1],1})c_{1,[0]}\tenbar c_{2,[0]} \tenbar c_{3,[0]}\quad \text{ (applied to }c_3)\\
     &\overset{\textnormal{\ref{def:wba}\ref{def:wba3}}}{=}\ep(c_{1,[1]}(c_{2,[1]}c_{3,[1]})_2)\ep((c_{2,[1]}c_{3,[1]})_1)c_{1,[0]}\tenbar c_{2,[0]} \tenbar c_{3,[0]} \\
     &\overset{\textnormal{\ref{def:wba}\ref{def:wba4}}}{=}\ep(c_{1,[1]}c_{2,[1]}c_{3,[1]})c_{1,[0]}\tenbar c_{2,[0]} \tenbar c_{3,[0]}.
 \end{align*}
 \noindent On the other hand, we have
  \begin{align*}
     &(\bar{\Delta}_C \tenbar \id)\bar{\Delta}_C (c)\\
     &=(\bar{\Delta}_C \tenbar \id)(\ep(c_{1,[1]}c_{2,[1]})c_{1,[0]}\tenbar c_{2,[0]}) \\
     &=(U_{C,C}\Delta_C \tenbar \id)(\ep(c_{1,[1]}c_{2,[1]})c_{1,[0]}\tenbar c_{2,[0]}) \\
     &=\ep(c_{1,[1]}c_{2,[1]})U_{C,C}(c_{1,[0],1}\otimes c_{1,[0],2}) \tenbar c_{2,[0]} \\
     &\overset{\eqref{eq:Ulower}}{=}\ep(c_{1,[1]}c_{2,[1]})\ep(c_{1,[0],1,[1]}c_{1,[0],2,[1]})c_{1,[0],1,[0]} \tenbar c_{1,[0],2,[0]} \tenbar c_{2,[0]} \\
     &\overset{\eqref{eq:delta-C}}{=}\ep(c_{1,1,[1]}c_{1,2,[1]}c_{2,[1]})\ep(c_{1,1,[0],[1]}c_{1,2,[0],[1]})c_{1,1,[0],[0]} \tenbar c_{1,2,[0],[0]} \tenbar c_{2,[0]}\;\; \text{ (applied to }c_1)\\
     &\overset{\eqref{eq:coalgcoassoc}}{=}\ep(c_{1,[1]}c_{2,[1]}c_{3,[1]})\ep(c_{1,[0],[1]}c_{2,[0],[1]})c_{1,[0],[0]} \tenbar c_{2,[0],[0]} \tenbar c_{3,[0]} \\
     &\overset{\eqref{eq:Hcomod-1}}{=}\ep(c_{1,[1],2}c_{2,[1]}c_{3,[1]})\ep(c_{1,[1],1}c_{2,[0],[1]})c_{1,[0]} \tenbar c_{2,[0],[0]} \tenbar c_{3,[0]} \quad \text{ (applied to }c_1)\\
     &\overset{\eqref{eq:Hcomod-1}}{=}\ep(c_{1,[1],2}c_{2,[1],2}c_{3,[1]})\ep(c_{1,[1],1}c_{2,[1],1})c_{1,[0]} \tenbar c_{2,[0]} \tenbar c_{3,[0]}\quad \text{ (applied to }c_2)\\
     &\overset{\textnormal{\ref{def:wba}\ref{def:wba3}}}{=}\ep((c_{1,[1]}c_{2,[1]})_2c_{3,[1]})\ep((c_{1,[1]}c_{2,[1]})_1)c_{1,[0]} \tenbar c_{2,[0]} \tenbar c_{3,[0]} \\
     &\overset{\textnormal{\ref{def:wba}\ref{def:wba4}}}{=}\ep(c_{1,[1]}c_{2,[1]}c_{3,[1]})c_{1,[0]} \tenbar c_{2,[0]} \tenbar c_{3,[0]}.
 \end{align*}
 Therefore, $\bar{\Delta}_C$ is coassociative.
 
 Now suppose that $f: C \to D$ is a morphism in $\CC$. We will show that $F(f):=f$ is also a morphism in ${\sf Coalg}(\MH).$ 
 Recall from \cref{sec:otimes-MH} that $(f\tenbar f)(c \tenbar c'):=f(c)\tenbar f(c')$ for $c \tenbar c' \in C\tenbar C$. We start with the first diagram. For any $c \in C$, we have
{\small \begin{align*}
     (f\tenbar f)\bar{\Delta}_C(c) &=(f\tenbar f)U_{C,C}\Delta_C(c) &
     &\overset{\eqref{eq:Ulower}}{=}(f\tenbar f)(\ep(c_{1,[1]}c_{2,[1]})c_{1,[0]}\tenbar c_{2,[0]}) \\
     &=\ep(c_{1,[1]}c_{2,[1]})f(c_{1,[0]})\tenbar f(c_{2,[0]}) &
     &\overset{\eqref{eq:comodmap}}{=}\ep(f(c_{1})_{[1]}f(c_{2})_{[1]})f(c_{1})_{[0]}\tenbar f(c_{2})_{[0]} \\
     &\overset{\eqref{eq:coalgmap-delta}}{=}\ep(f(c)_{1,[1]}f(c)_{2,[1]})f(c)_{1,[0]}\tenbar f(c)_{2,[0]} &
     &= U_{D,D}(f(c)_1 \otimes f(c)_2)\\
     &=U_{D,D}\Delta_D f(c) &
     &=\bar{\Delta}_Df(c).
 \end{align*} }
 We also have
 \begin{align*}
     \bar{\ep}_{D}f(c) 
     &\overset{\eqref{eq:Hs-coact},\textnormal{\ref{def:eps}}}{=} \eps(f(c)_{[1]})\ep_{D}(f(c)_{[0]}) 
     & \hspace{-.1in} \overset{\eqref{eq:comodmap}}{=}\eps(c_{[1]})\ep_{D}(f(c_{[0]})) 
     &\overset{\eqref{eq:coalgmap-ep}}{=}\eps(c_{[1]})\ep_{C}(c_{[0]}) 
     &=\bar{\ep}_{C}(c).
 \end{align*}
We have shown that $F(f):=f$ is a morphism in ${\sf Coalg}(\MH)$. It is clear that $F$ is functorial since $F(fg)=fg=F(f)F(g)$ and $F(\id_C)=\id_C=\id_{F(C)}$ for any morphisms $f,g$ in $\CC$ and any $C \in \CC$. 
 \end{proof}
 
Now take $(C, ~\bar{\Delta}_C, ~\bar{\ep}_C) \in {\sf Coalg}(\MH)$, and consider the assignment
\[
\begin{array}{rl}
\medskip
 G(C, ~\bar{\Delta}_C, ~\bar{\ep}_C) &:=
(C, \quad U^{C,C} \:\bar{\Delta}_C: C \to C \otimes C, \quad U^0\: \bar{\ep}_C: C \to \kk)\\
G(\bar{f}) &:= \bar{f}
\end{array}
\]
 for $\bar{f}: (C, \bar{\Delta}_C, \bar{\ep}_C) \to (D, \bar{\Delta}_D, \bar{\ep}_D) \in {\sf Coalg}(\MH)$. Here, $U^{C,C}$ and $U^0$ are given in \eqref{eq:Ulower}.
 
 \begin{claim}
 \label{claim:Coalg.G}
 The assignment $G$ given above yields a functor from ${\sf Coalg}(\MH)$ to $\CC$.
 \end{claim}
 
 \begin{proof}
To begin, define $$\Delta_C:=U^{C,C}\bar{\Delta}_C \quad \quad \text{ and } \quad \quad \ep_C:=U^0 \bar{\ep}_C.$$ Then by \cref{prp:monfunctor}(ii) and the fact that the forgetful functor $U$ is comonoidal (see \cref{sec:U}), we have that $(C, \Delta_C, \ep_C)$ is a coalgebra  in ${\sf Vec}_{\kk}$. It remains to check that $C$ is a right $H$-comodule coalgebra; that is, $C$ satisfies \eqref{eq:delta-C} and \eqref{eq:ep-C}.
 
 It is easily checked that \eqref{eq:delta-C} is equivalent to the commutativity of the outer square of the following diagram, where $\tau$  is the flip map in $\Vec$ and $m$ is the multiplication in $H$:

 \[
\begin{tikzcd}
 C \arrow[]{rrrrr}{\rho_C} \arrow[swap]{d}{\bar{\Delta}_C} 
 &&&&& C \otimes H \arrow{d}{\bar{\Delta}_C \otimes \id} \\
 C \tenbar C 
 \arrow[]{rrrrr}{\rho_{C \bar{\otimes} C}} \arrow[swap]{d}{U^{C,C}}
 &&&&& C \tenbar C \otimes H \arrow[]{d}{U^{C,C} \otimes \id}\\
  C \otimes C \arrow[]{r}{\rho_C \otimes \rho_C} 
 &C \otimes H \otimes C \otimes H \arrow{rr}{\id \otimes \tau \otimes \id} && C \otimes C \otimes H \otimes H \arrow[]{rr}{\id \otimes \id \otimes m}
 && C \otimes C \otimes H.
 \end{tikzcd}
 \]
 
 \bigskip

\noindent Since $\bar{\Delta}_C \in {\sf Coalg}(\MH)$, it is an $H$-comodule morphism, which leads to commutativity of the top square.
 To show commutativity of the rest of the diagram, let  $c \tenbar d \in C \tenbar C$, and consider the computation below:
 \begin{align*}
 (U^{C,C}\otimes \id)\rho_{C\bar{\otimes}C}(c \tenbar d) &=(U^{C,C}\otimes \id)(c_{[0]}\tenbar d_{[0]} \otimes c_{[1]}d_{[1]}) \\
&\overset{\eqref{eq:Uupper}}{=} c_{[0]}\otimes d_{[0]} \otimes c_{[1]}d_{[1]} \\
 &=(\id \otimes \id \otimes m)(\id \otimes \tau \otimes \id)(\rho_C \otimes \rho_C)(c\otimes d) \\
 &=(\id \otimes \id \otimes m)(\id \otimes \tau \otimes \id)(\rho_C \otimes \rho_C)U^{C,C}(c\tenbar d).
 \end{align*}
 Therefore, \eqref{eq:delta-C} holds. 
 
 Now we will show that \eqref{eq:ep-C} holds. Recall from \eqref{eq:Hs-coact} and \cref{def:eps} that for $c \in C,$ we have $\rho_C^s(c)=c_{[0]} \otimes \eps(c_{[1]}).$ 
Writing $\ep_C$ in terms of $\bar{\ep}_C$ yields that \eqref{eq:ep-C} holds if and only~if
 \[(U^0 \otimes \id)(\bar{\ep}_C \otimes \id) \rho(c)=(U^0 \otimes \id)(\bar{\ep}_C \otimes \id)\rho_C^s(c), \quad \forall c\in C,\]
 where $U^0$ is defined in \eqref{eq:Uupper}. In fact, we will show that if $(C, \bar{\Delta}_C, \bar{\ep}_C) \in {\sf Coalg}(\MH)$, then
 \begin{equation}
 \label{eq:barepC}
 (U^0 \otimes \id)(\bar{\ep}_C \otimes \id) \rho(c)=\bar{\ep}_C(c)=(U^0 \otimes \id)(\bar{\ep}_C \otimes \id)\rho_C^s(c), \quad \forall c\in C.
 \end{equation}
 
\noindent  Recall that since $\bar{\ep}_C \in {\sf Coalg}(\MH),$ it is an $H$-comodule map. In other words,
 \begin{equation}
 \label{eq:barep-Hcomodmap}
     \bar{\ep}_C(c_{[0]}) \otimes c_{[1]} = \rho_{H_s}\bar{\ep}_C(c) = \bar{\ep}_C(c)_1 \otimes \bar{\ep}_C(c)_2, \quad \forall c\in C,
 \end{equation}
  where $\rho_{H_s}$ is defined in \cref{sec:unitobj-MH}. Therefore,
 \[
 \begin{array}{rll}
 (U^0 \otimes \id)(\bar{\ep}_C \otimes \id) \rho(c)&=(U^0 \otimes \id)(\bar{\ep}_C(c_{[0]})\otimes c_{[1]})
 &\overset{\eqref{eq:Uupper}}{=}\ep(\bar{\ep}_C(c_{[0]})) c_{[1]} \\
 &\overset{\eqref{eq:barep-Hcomodmap}}{=}\ep(\bar{\ep}_C(c)_1) \bar{\ep}_C(c)_2 
 &\overset{\eqref{eq:coalgcounital}}{=} \bar{\ep}_C(c) \\
 &\overset{\eqref{eq:a}}{=} \eps(\bar{\ep}_C(c)) 
  &\overset{\eqref{eq:coalgcounital}}{=} \eps(\ep(\bar{\ep}_C(c)_1) \bar{\ep}_C(c)_2) \\
  \smallskip
  &=\ep(\bar{\ep}_C(c)_1)\eps(\bar{\ep}_C(c)_2) 
  &\overset{\eqref{eq:barep-Hcomodmap}}{=}\ep(\bar{\ep}_C(c_{[0]}))\eps(c_{[1]}) \\
  \smallskip
  &= (U^0 \otimes \id)(\bar{\ep}_C(c_{[0]})\otimes \eps(c_{[1]}))
  \smallskip
   &= (U^0 \otimes \id)(\bar{\ep}_C \otimes \id)\rho_C^s(c).
 \end{array} 
 \]
  
  \noindent So, \eqref{eq:barepC} holds, and we have shown that $G$ takes objects of ${\sf Coalg}(\MH)$ to objects of $\CC$. 
  
  It remains to show that $G$ takes morphisms of ${\sf Coalg}(\MH)$ to morphisms of $\CC$, since if so, it is clear that $G$ is functorial. Let $\bar{f}:C \to D \in {\sf Coalg}(\MH)$. Then $G(\bar{f})=\bar{f}$ is an $H$-comodule map. It remains to check that it is a coalgebra map: conditions \eqref{eq:coalgmap-delta} and \eqref{eq:coalgmap-ep}.
 Notice that $U^{C,C}(\bar{f} \tenbar \bar{f})=(\bar{f} \otimes \bar{f})U^{C,C}$ by definition of $\tenbar$ and the fact that $U^{C,C}$ is inclusion; see \eqref{eq:f-tenbar-g} and \eqref{eq:Uupper}. Then  \eqref{eq:coalgmap-delta} holds as 
$$
(\bar{f} \otimes \bar{f})\Delta_C = (\bar{f} \otimes \bar{f})U^{C,C}\bar{\Delta}_C 
\overset{}{=} U^{C,C}(\bar{f} \tenbar \bar{f})\bar{\Delta}_C 
\overset{}{=} U^{C,C}\bar{\Delta}_D \bar{f} \quad 
=\Delta_D \bar{f}.
$$
 On the other hand, \eqref{eq:coalgmap-ep}, we have
$\ep_{D} \bar{f}  = U^0\bar{\ep}_{D} \bar{f} = U^0 \bar{\ep}_{C} = \ep_{C},$
 where the second equality holds because $\bar{f} \in {\sf Coalg}(\MH)$. Therefore, $G$ is well-defined on morphisms.
 \end{proof}
 
 \begin{claim}
 \label{claim:Coalg.inverse}
 The functors $F$ and $G$ are mutually inverse.
 \end{claim}
 
 \begin{proof}
 Clearly, $FG$ and $GF$ are the identity on morphisms. It remains to show that they act as the identity on objects. Suppose that $(C, \bar{\Delta}_C,\bar{\ep}_C) \in {\sf Coalg}(\MH)$. Then
 \begin{align*}
FG(C, \bar{\Delta}_C,\bar{\ep}_C) &= F(C,\; U^{C,C}\bar{\Delta}_C, \; U^0\bar{\ep}_C) &
&= (C,\;  U_{C,C}U^{C,C}\bar{\Delta}_C,\; (U^0\bar{\ep}_C \otimes \id) \rho^s_C) \\
&\overset{\eqref{eq:UMNidentity}}{=} (C, \; \bar{\Delta}_C, \; (U^0\bar{\ep}_C \otimes \id) \rho^s_C) &
&\overset{\eqref{eq:barepC}}{=} (C, \bar{\Delta}_C,\bar{\ep}_C).
 \end{align*}
 
 Now suppose that $(C, \Delta_C, \ep_C) \in \CC$. We have
 \begin{align*}
 GF(C, \Delta_C, \ep_C) &=G(C, \; U_{C,C}\Delta_C, \; (\ep_C \otimes \id)\rho^s_C) \\
 &=(C, \; U^{C,C}U_{C,C}\Delta_C,\;  U^0(\ep_C \otimes \id)\rho^s_C) \; = \; (C, \Delta_C, \ep_C),
\end{align*}
where the last equality is justified by the following calculations. For any $c \in C$, we have
\[
\begin{array}{rll}
U^{C,C}U_{C,C}\Delta_C(c) &=U^{C,C}U_{C,C}(c_1 \otimes c_2) 
&\overset{\eqref{eq:Ulower}}{=}U^{C,C}(\ep(c_{1,[1]}c_{2,[1]})c_{1,[0]}\tenbar c_{2,[0]}) \\
&\overset{\eqref{eq:Uupper}}{=}\ep(c_{1,[1]}c_{2,[1]})c_{1,[0]}\otimes c_{2,[0]} 
&\overset{\eqref{eq:delta-C}}{=}\ep(c_{[1]})c_{[0],1}\otimes c_{[0],2} \\
&=\Delta_C(\ep(c_{[1]})c_{[0]}) 
&\overset{\eqref{eq:Hcomod-2}}{=}\Delta_C(c). 
\end{array}
\]
Similarly,
\[
\begin{array}{rll}
U^0(\ep_C \otimes \id)\rho^s_C(c) &= U^0(\ep_C(c_{[0]})\eps(c_{[1]})) 
&\overset{\eqref{eq:ep-C}}{=}U^0(\ep_C(c_{[0]})c_{[1]}) \\
&\overset{\eqref{eq:Uupper}}{=}\ep(\ep_C(c_{[0]})c_{[1]}) 
&=\ep_C(c_{[0]}\ep(c_{[1]})) \\
&\overset{\eqref{eq:Hcomod-2}}{=}\ep_C(c).
\end{array}
\]

\vspace{-.2in}

\qedhere
 \end{proof}
 
Therefore, the functors $F$ and $G$ (defined, respectively, in Claims~\ref{claim:Coalg.F} and~~\ref{claim:Coalg.G}) yield an isomorphism between the categories $\mathcal{C}:=\mathcal{C}^H$ and ${\sf Coalg}(\MH)$.
\end{proof}

Now consider the following examples.

\begin{example} \label{ex:coalg-Hs}
The counital subalgebra $H_s$ is  an object of $\mathcal{C}^H \cong {\sf Coalg}(\MH)$, as the unit object of a monoidal category is a coalgebra  in the category; see Section~\ref{sec:unitobj-MH}.
\end{example}

\begin{example} \label{ex:coalg-kQ}
Let $Q$ be a finite quiver. Let $Q_{\ell}$ denote the paths of length $\ell$ in $Q$, let $Q_0 = \{1, 2, \dots ,n \}$, and write $e_i$ for the trivial path at vertex $i$.
Let $\hay$ denote Hayashi's face algebra as defined in Example~\ref{ex:hay}. Recall from Example~\ref{ex:alg-kQ} that the path algebra $\kk Q$ is an $\hay$-comodule algebra via the right coaction $\rho_{\kk Q}(p) = \sum_{q \in Q_{\ell}} q \otimes x_{q,p}$ for $p \in Q_{\ell}$.
We now show that this coaction makes the path coalgebra $\kk Q$ an $\hay$-comodule coalgebra.

The coalgebra structure that we consider on $\kk Q$ is given as follows.
If $i \in  Q_0$ then $\Delta_{\kk Q}(e_i) = e_i \otimes e_i$ and $\ep_{\kk Q}(e_i) = 1$.
For $\ell \geq 1$ and for a path $p \in Q_{\ell}$, write $p = p_1 p_2 \cdots p_{\ell}$ where $p_1, \dots, p_{\ell} \in Q_1$. Then 
\[ \Delta_{\kk Q}(p) = (e_{s(p)} \otimes p) + \left(\textstyle \sum_{i=1}^{\ell-1} p_1 \cdots p_{i} \otimes p_{i+1}\cdots p_{\ell}\right) + (p \otimes e_{t(p)})
\]
and $\ep_{\kk Q}(p) = 0$.

First, we verify the compatibility of the comultiplication $\Delta_{\kk Q}$ with the coactions  $\rho_{\kk Q}$ and $\rho_{\kk Q \otimes \kk Q}$. If $i \in Q_0$, then
\begin{align*} 
 \rho_{\kk Q \otimes \kk Q} (\Delta_{\kk Q}(e_i)) &= \rho_{\kk Q \otimes \kk Q}(e_i \otimes e_i)  = \textstyle\sum_{j,k \in Q_0}   e_j \otimes e_k \otimes x_{j,i} x_{k,i} = \textstyle\sum_{j \in Q_0} e_j \otimes e_j \otimes x_{j,i} \\
&= \textstyle (\Delta_{\kk Q} \otimes \id) \left( \sum_{j \in Q_0} e_j \otimes x_{j,i} \right) = (\Delta_{\kk Q} \otimes \id)\rho_{\kk Q}(e_i).
\end{align*}
Now $p \in Q_{\ell}$ for some $\ell \geq 1$. Then we have
\begin{align*} \rho_{\kk Q \otimes \kk Q} (\Delta_{\kk Q}(p)) &= \textstyle \rho_{\kk Q \otimes \kk Q}\left( e_{s(p)} \otimes p + \left(\textstyle\sum_{i=1}^{\ell-1} p_1 \cdots p_{i} \otimes p_{i+1}\cdots p_{\ell}\right) + p \otimes e_{t(p)} \right) \\
&= \textstyle \sum_{\substack{j \in Q_0 \\ q \in Q_{\ell}}}   e_j \otimes q \otimes x_{j, s(p)} x_{q,p} + \textstyle\sum_{i=1}^{\ell-1} \sum_{\substack{r \in Q_i \\ u \in Q_{\ell-i}}} r \otimes u \otimes x_{r, p_1 \cdots p_i} x_{u, p_{i+1} \cdots p_{\ell}} \\
& \hspace{.2in} +\textstyle \sum_{\substack{v \in Q_\ell \\ k \in Q_0}} v \otimes e_{k} \otimes x_{v,p} x_{k, t(p)} \\
&= \sum_{\substack{q \in Q_{\ell}}}   e_{s(q)} \otimes q \otimes  x_{q,p} + \sum_{i=1}^{\ell-1} \sum_{\substack{r \in Q_i, u \in Q_{\ell-i} \\ \text{with } t(r) = s(u)}} \hspace{-.2in} r \otimes u \otimes x_{ru, p} + \sum_{v \in Q_\ell} v \otimes e_{t(v)} \otimes x_{v,p}.
\end{align*}
Further,
\begin{align*}
(\Delta_{\kk Q} \otimes \id)\rho_{\kk Q}(p) &=  \textstyle (\Delta_{\kk Q} \otimes \id) \left( \sum_{q \in Q_{\ell}} q \otimes x_{q,p} \right) \\
& \hspace{-.4in} = \textstyle \sum_{q \in Q_{\ell}} \hspace{-.05in} \left( e_{s(q)} \otimes q \otimes x_{q,p} + \left(\sum_{i=1}^{\ell-1} q_1 \cdots q_{i} \otimes q_{i+1}\cdots q_{\ell} \otimes x_{q,p}\right) + q \otimes e_{t(q)} \otimes x_{q,p} \right).
\end{align*}
So, for each $p \in Q_{\ell}$ we have
$\rho_{\kk Q \otimes \kk Q} (\Delta_{\kk Q}(p)) = (\Delta_{\kk Q} \otimes \id)\rho_{\kk Q}(p),$
as desired.

For compatibility of the counit $\ep_{\kk Q}$ with the coaction, first let $i \in Q_0$. Then
\[ \textstyle (\ep_{\kk Q} \otimes \id)(\rho_{\kk Q}(e_i)) = (\ep_{\kk Q} \otimes \id)\left( \sum_{j \in Q_0} e_j \otimes x_{j,i}\right) = \sum_{j \in Q_0} x_{j,i},
\]
while
\[ \textstyle (\ep_{\kk Q} \otimes \eps)(\rho_{\kk Q}(e_i)) = (\ep_{\kk Q} \otimes \eps)\left( \sum_{j \in Q_0} e_j \otimes x_{j,i} \right) = \sum_{j \in Q_0} \eps(x_{j,i}) = \eps(x_{i,i}) = \sum_{j \in Q_0} x_{j,i}.
\]
Now let $\ell \geq 1$ and let $p \in Q_{\ell}$. Then
\[  \textstyle  (\ep_{\kk Q} \otimes \id)(\rho_{\kk Q}(p)) = (\ep_{\kk Q} \otimes \id)\left( \sum_{q \in Q_{\ell}} q \otimes x_{q,p}\right) = 0 = (\ep_{\kk Q} \otimes \eps)(\rho_{\kk Q}(p)).
\]
Hence, by Definition~\ref{def:C}, the path coalgebra $\kk Q$ is an $\hay$-comodule coalgebra.
\end{example}


\section{Weak comodule Frobenius algebras} \label{sec:Frobenius}

We now define comodule Frobenius algebras over weak bialgebras and prove a result analogous to Theorems~\ref{thm:alg} and~\ref{thm:coalg}. Recall the category $\mathcal{A}^H$ of right $H$-comodule algebras and the category $\mathcal{C}^H$ of right $H$-comodule coalgebras from Definitions~\ref{def:A} and~\ref{def:C}, respectively. 
\begin{definition}[$\mathcal{F}^H =: \mc{F}$] \label{def:F}  Consider the {\it category $\mathcal{F}^H$ of right $H$-comodule Frobenius algebras} defined as follows. The objects of $\mathcal{F}^H$ are Frobenius algebras in~${\sf Vec}_\Bbbk$, 
$$(A, \; \; m_A: A \otimes A \to A, \; \; u_A: \kk \to A, \; \; \Delta_A: A \to A \otimes A , \; \; \ep_A: A \to \Bbbk),$$
so that $(A,m_A,u_A) \in \mathcal{A}^H$ and $(A,\Delta_A,\ep_A) \in \mathcal{C}^H$.
The morphisms of $\mathcal{F}^H$ are $\kk$-linear maps that simultaneously belong to $\mathcal{A}^H$ and $\mathcal{C}^H$.
When the weak bialgebra $H$ is clear from context, we denote this category by $\mc{F}$.
\end{definition}

\begin{theorem} \label{thm:FrobAlg}
The category $\mathcal{F}$ of right $H$-comodule Frobenius algebras is isomorphic to the category {\sf FrobAlg}$(\MH)$ of Frobenius algebras in $\MH$. 
\end{theorem}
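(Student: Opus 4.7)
The plan is to adapt the two-functor strategy used in the proofs of Theorems~\ref{thm:alg} and~\ref{thm:coalg}. I would define mutually inverse functors
\[
F: \mathcal{F} \to {\sf FrobAlg}(\MH) \qquad \text{and} \qquad G: {\sf FrobAlg}(\MH) \to \mathcal{F}
\]
by gluing together the corresponding functors from the two previous theorems. Explicitly, for $(A, m_A, u_A, \Delta_A, \ep_A)\in\mc{F}$, set
\[
F(A, m_A, u_A, \Delta_A, \ep_A) := \bigl(A,\; m_A\,U^{A,A},\; \nu_A(u_A \otimes \id_{H_s}),\; U_{A,A}\,\Delta_A,\; (\ep_A \otimes \id)\,\rho^s_A\bigr),
\]
and for $(A, \bar m_A, \bar u_A, \bar\Delta_A, \bar\ep_A)\in {\sf FrobAlg}(\MH)$, set
\[
G(A, \bar m_A, \bar u_A, \bar\Delta_A, \bar\ep_A) := \bigl(A,\; \bar m_A\,U_{A,A},\; \bar u_A\,U_0,\; U^{A,A}\,\bar\Delta_A,\; U^0\,\bar\ep_A\bigr).
\]
Both $F$ and $G$ act as the identity on underlying $\kk$-linear maps; this is well-defined on morphisms because a morphism in $\mc{F}$ (resp., ${\sf FrobAlg}(\MH)$) is by definition a morphism in both $\mc{A}$ and $\mc{C}$ (resp., both ${\sf Alg}(\MH)$ and ${\sf Coalg}(\MH)$).

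By Theorems~\ref{thm:alg} and~\ref{thm:coalg}, these assignments already produce valid algebra and coalgebra structures of the desired sort on both sides, and yield functoriality. So the only new content is to verify that the Frobenius compatibility condition~\eqref{eq:Frobalg} transfers under each functor, and that $F$ and $G$ are mutually inverse on objects.

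For $G$, the verification is immediate: since the forgetful functor $U:\MH \to \Vec$ is Frobenius monoidal (Section~\ref{sec:U}), a direct application of Proposition~\ref{prp:monfunctor}(iii) shows that $G(A, \bar m_A, \bar u_A, \bar\Delta_A, \bar\ep_A)$ satisfies \eqref{eq:Frobalg} in $\Vec$, since its structure maps agree exactly with those prescribed by that proposition. For $F$, on the other hand, hands-on verification is required, and this is the main obstacle. I would establish \eqref{eq:Frobalg} in $\MH$ for $\bar m_A := m_A U^{A,A}$ and $\bar\Delta_A := U_{A,A}\Delta_A$ by evaluating both sides on an arbitrary $a \tenbar b \in A \tenbar A$, unfolding the action of $\bar{m}_A \tenbar \id$ and $\id \tenbar \bar\Delta_A$ via~\eqref{eq:f-tenbar-g}, and using that the associator in $\MH$ is inherited from $\Vec$. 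The computation should collapse to the classical Frobenius identity $(m_A \otimes \id)(\id \otimes \Delta_A) = \Delta_A m_A$ in $\Vec$, aided by $U_{A,A}U^{A,A} = \id_{A\tenbar A}$ from~\eqref{eq:UMNidentity}; the counital constraint~\eqref{eq:tenbar} defining $A \tenbar A$ is what allows the $\ep$-factors introduced by $U_{A,A}$ to be absorbed. The analogous computation handles the second equality in~\eqref{eq:Frobalg}.

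Finally, $F$ and $G$ are mutually inverse on objects because, by construction, they act componentwise as the simultaneous algebra-level and coalgebra-level $F$ and $G$, each of which is mutually inverse by Theorems~\ref{thm:alg} and~\ref{thm:coalg} (see Claims~\ref{claim:Alginverse} and~\ref{claim:Coalg.inverse}). Since morphisms are treated as identities on underlying $\kk$-linear maps, functoriality and mutual invertibility on morphisms are automatic.
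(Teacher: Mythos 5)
Your proposal is correct and follows essentially the same route as the paper: the same glued functors $F$ and $G$, the same appeal to Proposition~\ref{prp:monfunctor}(iii) and the Frobenius monoidality of $U$ for the $G$ direction, the same reduction of everything except condition~\eqref{eq:Frobalg} to Theorems~\ref{thm:alg} and~\ref{thm:coalg}, and the same element-wise verification of~\eqref{eq:Frobalg} for $F$ using~\eqref{eq:UMNidentity} and absorption of the $\ep$-factors (the paper does this via~\eqref{eq:delta-C} and the comodule counit axiom, collapsing both sides to $ab_1 \tenbar b_2$). The only part you leave as a sketch is that final computation, which the paper carries out explicitly but along exactly the lines you describe.
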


\begin{proof} Similar to the proofs of Theorems~\ref{thm:alg} and~\ref{thm:coalg}, we achieve this result by constructing mutually inverse functors $F$ and $G$ below.

\vspace{-.1in}

\[
\begin{tikzcd}
\mathcal{F} \arrow[shift left]{rr}{F} & & {\sf FrobAlg}(\MH) \arrow[shift left]{ll}{G}
\end{tikzcd}
\]

First, take $(A,m_A, u_A, \Delta_A, \ep_A) \in \mathcal{F}$, and consider the assignment
\[
\begin{array}{rl}
\medskip
 F(A,m_A, u_A, \Delta_A, \ep_A) &:=
(A,\; m_A U^{A,A},\; \nu_A  (u_A \otimes \id_{H_s}),\; U_{A,A} \:\Delta_A, \;(\ep_A \otimes \id)\rho_{A}^s)\\
F(f) &:= f
\end{array}
\]
 for $f: (A,m_A, u_A, \Delta_A, \ep_A) \to (B,m_B, u_B, \Delta_B, \ep_B) \in \mathcal{F}$. Here, $U^{A,A}, \;\nu_A,\; U_{A,A},$ and $\rho_A^s$ are given in \cref{eq:Uupper,eq:Hs-act,eq:Ulower,eq:Hs-coact}, respectively.
 
 \begin{claim}
 \label{claim:FrobAlg.F}
 The assignment $F$ above yields a functor from $\mc{F}$ to ${\sf FrobAlg}(\MH)$.
 \end{claim}
 
 \begin{proof}
 Define $(A, \bar{m}_A, \bar{u}_A, \bar{\Delta}_A, \bar{\ep}_A):=F(A,m_A,u_A,\Delta_A,\ep_A)$. By \cref{thm:alg,thm:coalg}, in particular by Claims~\ref{claim:Alg.F} and~\ref{claim:Coalg.F},  $(A, \bar{m}_A, \bar{u}_A) \in {\sf Alg}(\MH)$ and $(A,\bar{\Delta}_A, \bar{\ep}_A)\in {\sf Coalg}(\MH)$. It remains to show that the Frobenius algebra condition \eqref{eq:Frobalg} holds; namely, that
 \[(\bar{m}_A \tenbar \id)(\id \tenbar \bar{\Delta}_A) ~=~ \bar{\Delta}_A \bar{m}_A ~=~ (\id \tenbar \bar{m}_A)(\bar{\Delta}_A \tenbar \id).\]
 (Recall from Section~\ref{sec:corep} that the associativity constraint of $\MH$ is the identity.)
 We will show the left equality; the proof of the right equality is similar. Let $a \tenbar b \in A \tenbar A.$ Then
 \begin{align*}
(\bar{m}_A \tenbar \id)(\id \tenbar \bar{\Delta}_A)(a \tenbar b) &=  (\bar{m}_A \tenbar \id)(\id \tenbar U_{A,A}\Delta_A)(a \tenbar b)\\
&\overset{\eqref{eq:Ulower}}{=}  (\bar{m}_A \tenbar \id)(a \tenbar \ep(b_{1,[1]}b_{2,[1]})b_{1,[0]} \tenbar b_{2,[0]})\\
&\overset{\eqref{eq:Uupper}}{=}  (m_A \tenbar \id)(a \otimes \ep(b_{1,[1]}b_{2,[1]})b_{1,[0]} \tenbar b_{2,[0]})\\
&\overset{\eqref{eq:delta-C}}{=} a\ep(b_{[1]})b_{[0],1} \tenbar b_{[0],2}\\
&\overset{}{=} a b_1 \tenbar b_2,
 \end{align*}
where the last equality is justified by the following computation:
 \begin{align*}
 b_1 \otimes b_2 &=\; \Delta_C(b) 
 &\hspace{-.25in}\overset{\eqref{eq:Hcomod-2}}{=} \Delta_C(\ep(b_{[1]})b_{[0]}) 
 &=\; \ep(b_{[1]})\Delta_C(b_{[0]}) 
 &\hspace{-.25in} =\ep(b_{[1]})b_{[0],1}\otimes b_{[0],2}.
 \end{align*}
 On the other hand, we have 
 \begin{align*}
\bar{\Delta}_A \bar{m}_A (a \tenbar b) &=U_{A,A}\Delta_A m_AU^{A,A}(a \tenbar b) &
&\overset{\eqref{eq:Frobalg}}{=}U_{A,A}(m_A \otimes \id)(\id \otimes \Delta_A)U^{A,A}(a \tenbar b) 
\\
&=U_{A,A}(ab_1 \otimes b_2) &
&\overset{\eqref{eq:Uupper}}{=}U_{A,A}U^{A,A}(ab_1 \tenbar b_2) 
\\
&\overset{\eqref{eq:UMNidentity}}{=}ab_1 \tenbar b_2. 
 \end{align*}
 We have shown that $F$ takes objects to objects. By \cref{thm:alg,thm:coalg}, $F$ is functorial and $F$ takes morphisms of $\mc{F}$ to morphisms of ${\sf FrobAlg}(\MH)$.
 \end{proof}
 
 Now take $(A, ~\bar{m}_A, ~\bar{u}_A, ~\bar{\Delta}_A, ~\bar{\ep}_A) \in {\sf FrobAlg}(\MH)$, and consider the assignment
\[
\begin{array}{rl}
\medskip
 G(A, ~\bar{m}_A, ~\bar{u}_A,~\bar{\Delta}_A, ~\bar{\ep}_A) &:=
(A,  ~\bar{m}_A U_{A,A}, ~\bar{u}_A U_0,  ~U^{A,A} \:\bar{\Delta}_A,~ U^0\: \bar{\ep}_A)\\
G(\bar{f}) &:= \bar{f}
\end{array}
\]
 for $\bar{f}: (A, \bar{m}_A, \bar{u}_A, \bar{\Delta}_A, \bar{\ep}_A) \to (B, \bar{m}_B, \bar{u}_B, \bar{\Delta}_B, \bar{\ep}_B) \in {\sf FrobAlg}(\MH)$. Here, $U_{A,A}$, $U^{A,A}$, $U_0$, and $U^0$ are given in \cref{sec:U}.

\begin{claim}
 \label{claim:FrobAlg.G}
 The assignment $G$ given above yields a functor from ${\sf FrobAlg}(\MH)$ to $\mc{F}$.
 \end{claim}
 
 \begin{proof}
Recall from \cref{sec:U} that the forgetful functor $U:\MH \to \Vec$ is Frobenius monoidal. Then by \ref{prp:monfunctor}(iii), \[G(A, \bar{m}_A, \bar{u}_A, \bar{\Delta}_A, \bar{\ep}_A)=
(U(A),  ~U(\bar{m}_A) U_{A,A}, ~U(\bar{u}_A) U_0,  ~U^{A,A} U(\bar{\Delta}_A), ~U^0 U( \bar{\ep}_A))\] is a Frobenius algebra in $\Vec$.
 Moreover, by \cref{thm:alg,thm:coalg}, in particular by Claims~\ref{claim:Alg.G} and~\ref{claim:Coalg.G}, we know that $(A, m_A, u_A, \Delta_A, \ep_A):=G(A, \bar{m}_A, \bar{u}_A, \bar{\Delta}_A, \bar{\ep}_A)$ is both a right $H$-comodule algebra and a right $H$-comodule coalgebra.  Hence $G$ takes objects to objects.
By \cref{thm:alg,thm:coalg}, $G$ is functorial and $G$ takes morphisms of $\mc{F}$ to morphisms of ${\sf FrobAlg}(\MH)$.
 \end{proof}

Now by Claims \ref{claim:Alginverse} and \ref{claim:Coalg.inverse}, the functors $F$ defined in Claim~\ref{claim:FrobAlg.F} and $G$ defined in Claim~\ref{claim:FrobAlg.G} are mutual inverses. Therefore, these functors yield an isomorphism between the categories $\mathcal{F}:=\mathcal{F}^H$ and ${\sf FrobAlg}(\MH)$.
\end{proof} 

Now consider the following examples.

\begin{example} \label{ex:Frobalg-Hs}
The counital subalgebra $H_s$ is  an object of $\mathcal{F}^H \cong {\sf FrobAlg}(\MH)$, as the unit object of a monoidal category is a Frobenius algebra  in the category; see Section~\ref{sec:unitobj-MH}.
\end{example}

\begin{example}\label{ex:Frobalg-kQ} We showed in \cref{ex:alg-kQ,ex:coalg-kQ} that $\kk Q$ is an $\hay$-comodule algebra and an $\hay$-comodule coalgebra. By definition, $\kk Q$ is an $\hay$-comodule Frobenius algebra if $\kk Q$ is first  a Frobenius $\kk$-algebra.
But it is straightforward to show that under the given algebra and coalgebra structure that $\kk Q$ is a Frobenius $\kk$-algebra if and only if $Q=Q_0$ (i.e., $Q$ is arrowless).  
In this case, $\kk Q \cong \hay_s$ in the category $\mathcal{F}$, so this example reduces to \cref{ex:Frobalg-Hs}. Thus $\kk Q = \kk Q_0$ is a Frobenius algebra in $\mathcal{M}^{\hay}$ by \cref{thm:FrobAlg}.
\end{example}

On the other hand, we provide below an example of a Frobenius algebra over the weak bialgebra $\hay$ that is not the counital subalgebra $\hay_s$.

\begin{example}\label{ex:FrobAlgMatrixAlg}
Take $Q$ to be the quiver below: 
\[Q = \begin{tikzcd}
1 \arrow[r, "p", shift left = 1] & 2 \arrow[l, "p^*", shift left = 1]
\end{tikzcd}\]
Consider the $\kk$-algebra $A=\kk Q/(pp^* - e_1,~p^*p-e_2)$. Then $A$ admits the structure of a Frobenius $\kk$-algebra via 
\[
\begin{array}{rlcrl}
\Delta(e_1)&= e_1 \otimes e_1 + p \otimes p^*,& \quad \quad &\Delta(p)&= e_1 \otimes p + p \otimes e_2,\\
\Delta(p^*)&= p^* \otimes e_1 + e_2 \otimes p^*,&\quad \quad &\Delta(e_2) &= p^* \otimes p + e_2 \otimes e_2,
\end{array}
\]
and $\ep(e_1) = \ep(e_2) = 1$, $\ep(p) = \ep(p^*) =0.$ Namely, $A$ is isomorphic to the matrix algebra Mat$_2(\kk)$ via $e_1,p,p^*,e_2 \mapsto E_{11}, E_{12}, E_{21}, E_{22}$, respectively.
Moreover, we have a coaction $\rho: A \to A \otimes \hay$ given by
\[
\begin{array}{rlcrl}
\rho(e_1) &= e_1 \otimes x_{1,1} + e_2 \otimes x_{2,1}, &\quad \quad 
&\rho(e_2) &= e_1 \otimes x_{1,2} + e_2 \otimes x_{2,2}\\
\rho(p) &= p \otimes x_{p,p} + p^* \otimes x_{p^*,p}, &\quad \quad & 
\rho(p^*) &= p \otimes x_{p,p^*} + p^* \otimes x_{p^*,p^*},
\end{array}
\]
that yields that $A$ belongs to the category $\mathcal{F}$. The coassociativity and counitality of $\rho$ follows from a similar proof to \cref{ex:alg-kQ}. We already know that $A$ is a Frobenius algebra, so it only remains to check that $\kk Q$ is an $\hay$-comodule algebra and an $\hay$-comodule coalgebra. The first condition follows from the proof of \cref{ex:alg-kQ} and by checking that $\rho(p)\rho(p^*)=\rho(e_1)$ and $\rho(p^*)\rho(p)=\rho(e_2)$. The second condition can be checked by hand.
By \cref{thm:FrobAlg}, we can give $A$ the structure of a Frobenius algebra  in $\mathcal{M}^{\hay}$.
\end{example}


\section{Weak quantum symmetry via quantum transformation groupoids} \label{sec:QTG}

In the section, we consider a weak Hopf algebra that arises from a double-crossed product construction: the quantum transformation groupoid $H(L,B,\triangleleft)$ [Definition~\ref{def:QTG}]. Here, $L$ is a Hopf algebra and $B$ is a certain right $L$-module algebra via action $\triangleleft$ [Notation~\ref{not:QTG}]. To achieve a supply of weak quantum symmetries, we produce a monoidal functor from a category of $L$-bicomodules to $H$-comodules [Theorem~\ref{thm:QTG}], thus illustrating the main result of Section~\ref{sec:algebras}. We then present explicit examples of these new weak quantum symmetries.

We assume in this section that $\kk$ is an algebraically closed field. We also fix  standing notation and hypotheses below.

\begin{notation}[$L$, $B$, $e^{(i)}$, $\triangleleft$, $\triangleright$, $\omega$] \label{not:QTG} Let $L$ be a Hopf algebra over $\kk$. 
Let $B$ be a strongly separable $\kk$-algebra with symmetric separability idempotent $e = e^{(1)} \otimes e^{(2)} \in B \otimes B^{op}$, where we omit summation notation. If we use multiple copies of the separability idempotent in a calculation, we will denote the additional copies with primes like so: $e'^{(1)}\otimes e'^{(2)}$. We have the following identities:
\begin{align}
 be^{(1)} \otimes e^{(2)} &= e^{(1)} \otimes e^{(2)}b, \quad \quad \forall b \in B, \label{e-iden-1}\\
   e^{(1)} e^{(2)} &= 1_B. \label{e-iden-2}
\end{align}
Assume that $B$ is a right $L$-module algebra with action
$$  \quad \quad \quad \quad  B \otimes L \to B, \quad \quad b \otimes h \mapsto b \triangleleft h, \quad \quad \forall b \in B, ~h \in L.$$
So, $B^{op}$ is a left $L$-module algebra with action via the antipode $S_L$ of $L$,
\begin{equation}  \label{e-QTGaction}\quad \quad \quad \quad  L \otimes B^{op} \to B^{op}, \quad \quad h \otimes a \mapsto h \triangleright a:= a \triangleleft S_L(h), \quad \quad \forall a \in B^{op}, ~h \in L. \end{equation}
Let $\omega: B \to \kk$ be the nondegenerate trace form of $B$, which is characterized by
\begin{align}
\omega(e^{(1)})e^{(2)} ~=~ e^{(1)} \omega(e^{(2)}) ~=~ 1_B. \label{e-iden-3} 
\end{align}
\end{notation}

We assume the following useful identities:
\begin{align}
\omega((h \triangleright a)b) &~=~ \omega(a(b \triangleleft h)), \label{e-iden-4} \\
e^{(1)} \otimes (h \triangleright e^{(2)}) &~=~ (e^{(1)} \triangleleft h) \otimes e^{(2)}, \label{e-iden-5} 
\end{align}
for $a \in B^{op}$, $b \in B$, and $h \in L$.

\smallskip

Now we consider the following weak Hopf algebra introduced in \cite[Section~2.6]{NV}, but modified slightly here; see also \cite[Example 3, Appendix D]{Nill}. We verify that this is indeed a weak Hopf algebra in the first preprint version of this work (available at arXiv:math/1911.12847v1).

\begin{definition}[$H:=H(L,B,\triangleleft)$] \label{def:QTG}
Let $H:=H(L,B,\triangleleft)$ be the weak Hopf algebra, which is $B^{op} \otimes L \otimes B$ as a $\kk$-vector space, with the following structure:

\smallskip

\hspace{-.2in}
{\renewcommand{\arraystretch}{1.2} 
\begin{tabular}{rlll}
     $\bullet$ & multiplication & & $m((a \otimes h \otimes b) \otimes (a' \otimes h' \otimes b')) ~=~ (h_1 \triangleright a')a \otimes h_2 h_1' \otimes (b \triangleleft h_2')b'$;  \\
     $\bullet$ & unit  & & $u(1_\kk) = 1_{B} \otimes 1_L \otimes 1_B$; \\
     $\bullet$ & comultiplication & &  $\Delta(a \otimes h \otimes b) ~=~ (a \otimes h_1 \otimes e^{(1)}) \otimes ((h_2 \triangleright e^{(2)}) \otimes h_3 \otimes b)$; \\
     $\bullet$ & counit & & $\varepsilon(a \otimes h \otimes b) ~=~ \omega(a(b \triangleleft S_L^{-1}(h)))$;\\
     $\bullet$ & antipode & &  $S(a \otimes h \otimes b) ~=~ b \otimes S_L(h) \otimes a$;
\end{tabular}
}

\noindent for all $a,a' \in B^{op}$, $b, b' \in B$, and $h, h' \in L$. We refer to $H$ as a {\it quantum transformation groupoid} (QTG). 
\end{definition}

\begin{lemma} \label{lem:qtg-subalg}
The source and target counital subalgebras of the QTG $H:=H(L,B,\triangleleft)$ are given, respectively, as follows:
$$H_s = 1_{B} \otimes 1_L \otimes B \qquad \text{and} \qquad H_t = B^{op} \otimes 1_L \otimes 1_B.$$
\end{lemma}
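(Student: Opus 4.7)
The plan is to prove both identities by direct calculation from the formulas in Definition~\ref{def:QTG}, using only the separability and trace identities in Notation~\ref{not:QTG}. The arguments for $H_s$ and $H_t$ are parallel, so I describe the former in detail.

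First, I would pin down $\Delta(1_H)$. Plugging $a = b = 1_B$, $h = 1_L$ into the coproduct formula and using that $B$ is a unital $L$-module (so $1_L \triangleright e^{(2)} = e^{(2)} \triangleleft S_L(1_L) = e^{(2)}$), one obtains $\Delta(1_H) = (1_B \otimes 1_L \otimes e^{(1)}) \otimes (e^{(2)} \otimes 1_L \otimes 1_B)$, so $1_1 = 1_B \otimes 1_L \otimes e^{(1)}$ and $1_2 = e^{(2)} \otimes 1_L \otimes 1_B$ in Sweedler notation.

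Next, I would compute $\varepsilon_s$ explicitly. A short application of the multiplication formula, using $(1_L)_1 = (1_L)_2 = 1_L$, gives $(a \otimes h \otimes b)\cdot 1_2 = (h_1 \triangleright e^{(2)})\, a \otimes h_2 \otimes b$, and then applying $\varepsilon$ produces a scalar, yielding
\[\varepsilon_s(a \otimes h \otimes b) \;=\; 1_B \otimes 1_L \otimes \omega\bigl((h_1 \triangleright e^{(2)})\,a\,(b \triangleleft S_L^{-1}(h_2))\bigr)\,e^{(1)}.\]
The right-hand side manifestly lies in $1_B \otimes 1_L \otimes B$, so $H_s = \varepsilon_s(H) \subseteq 1_B \otimes 1_L \otimes B$. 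For the reverse inclusion, I would specialise at $1_B \otimes 1_L \otimes b$: the scalar collapses to $\omega(e^{(2)} b)$, and the trace identity $\omega(e^{(2)} b)\,e^{(1)} = b$---obtained by applying $\id \otimes \omega$ to \eqref{e-iden-1} and invoking \eqref{e-iden-3}---gives $\varepsilon_s(1_B \otimes 1_L \otimes b) = 1_B \otimes 1_L \otimes b$. Hence every element of $1_B \otimes 1_L \otimes B$ lies in the image of $\varepsilon_s$.

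The argument for $H_t$ proceeds dually: the computation $1_1 \cdot (a \otimes h \otimes b) = a \otimes h_1 \otimes (e^{(1)} \triangleleft h_2)\,b$ produces a formula for $\varepsilon_t$ whose image visibly lies in $B^{op} \otimes 1_L \otimes 1_B$, and the companion identity $\omega(a\,e^{(1)})\,e^{(2)} = a$ (also a direct consequence of \eqref{e-iden-1} and \eqref{e-iden-3}) shows $\varepsilon_t(a \otimes 1_L \otimes 1_B) = a \otimes 1_L \otimes 1_B$. The main obstacle is purely bookkeeping---keeping track of whether products occur in $B$ or $B^{op}$, remembering the twisted action \eqref{e-QTGaction}, and applying the symmetry \eqref{e-iden-1} of the separability idempotent at the right moment. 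No deeper ingredient is needed.
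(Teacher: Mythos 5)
Your argument is correct. Both you and the paper start the same way: compute $\Delta(1_H) = (1_B \otimes 1_L \otimes e^{(1)}) \otimes (e^{(2)} \otimes 1_L \otimes 1_B)$ and then evaluate $\varepsilon_s$ and $\varepsilon_t$ directly from Definition~\ref{def:eps}. Where you diverge is the endgame. The paper pushes the computation all the way to closed forms, $\varepsilon_s(a \otimes h \otimes b) = 1_B \otimes 1_L \otimes (a \triangleleft h)b$ and $\varepsilon_t(a \otimes h \otimes b) = (h \triangleright b)a \otimes 1_L \otimes 1_B$, using \eqref{e-iden-5}, \eqref{e-iden-1}, \eqref{e-iden-3}, the module-algebra axiom, and $S_L^{-1}(h_2)h_1 \mapsto \varepsilon_L(h)$; both inclusions then drop out at once. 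You instead stop at the raw formula $\varepsilon_s(a \otimes h \otimes b) = 1_B \otimes 1_L \otimes \omega\bigl((h_1 \triangleright e^{(2)})\,a\,(b \triangleleft S_L^{-1}(h_2))\bigr)e^{(1)}$, which already gives $H_s \subseteq 1_B \otimes 1_L \otimes B$, and you get the reverse inclusion by checking that $\varepsilon_s$ fixes $1_B \otimes 1_L \otimes b$ via the identity $\omega(e^{(2)}b)e^{(1)} = b$ (and dually $\omega(ae^{(1)})e^{(2)} = a$ for $H_t$); both identities do follow from \eqref{e-iden-1} and \eqref{e-iden-3} exactly as you say. Your route is shorter and avoids the heavier Hopf-algebra bookkeeping; the paper's route costs more computation but yields the explicit action of $\varepsilon_s$ and $\varepsilon_t$ on all of $H$, which is extra information beyond what the lemma asserts. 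Either way the lemma is proved.
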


\begin{proof}
This follows from the computations below: For $a \in B^{op}, h \in L, b \in B$, we get
\begin{align*}
\varepsilon_s(a \otimes h \otimes b) 
&= (1_B \otimes 1_L \otimes e^{(1)}) \varepsilon((a \otimes h \otimes b)(e^{(2)} \otimes 1_L \otimes 1_B))\\
&=(1_B \otimes 1_L \otimes e^{(1)})\varepsilon((h_1 \triangleright e^{(2)})a \otimes h_2 \otimes b)\\
&=\omega((h_1 \triangleright e^{(2)})a(b \triangleleft S_L^{-1}(h_2)))(1_B \otimes 1_L \otimes e^{(1)})\\
&\overset{\eqref{e-iden-5}}{=} \omega(e^{(2)}a(b \triangleleft S_L^{-1}(h_2)))(1_B \otimes 1_L \otimes (e^{(1)} \triangleleft h_1))\\
&\overset{\eqref{e-iden-1}}{=} \omega(e^{(2)})(1_B \otimes 1_L \otimes ((a(b \triangleleft S_L^{-1}(h_2))e^{(1)}) \triangleleft h_1))\\
&\overset{\eqref{e-iden-3}}{=} 1_B \otimes 1_L \otimes (a(b \triangleleft S_L^{-1}(h_2))) \triangleleft h_1\\
&= 1_B \otimes 1_L \otimes (a \triangleleft h_1)(b \triangleleft \varepsilon_L(h_2))\\
&= 1_B \otimes 1_L \otimes (a \triangleleft h)b, \quad  \text{and } \\\\
\varepsilon_t(a \otimes h \otimes b) 
&= \varepsilon((1_B \otimes 1_L \otimes e^{(1)})(a \otimes h \otimes b))(e^{(2)} \otimes 1_L \otimes 1_B)\\
&= \varepsilon(a \otimes h_1 \otimes (e^{(1)} \triangleleft h_2)b) (e^{(2)} \otimes 1_L \otimes 1_B)\\
&= \omega(a((e^{(1)}\lt h_2)b \lt S_L^{-1}(h_1) )(e^{(2)}\otimes 1_L \otimes 1_B)\\   
&= \omega(a(e^{(1)}\lt h_2 S_L^{-1}(h_{1,2}))(b \lt S_L^{-1}(h_{1,1}) ))(e^{(2)}\otimes 1_L \otimes 1_B)\\ 
&= \omega(ae^{(1)}(b \lt S_L^{-1}(h) ))(e^{(2)}\otimes 1_L \otimes 1_B)\\ 
&\overset{\eqref{e-iden-1}}{=} \omega(e^{(1)}(b \lt S_L^{-1}(h) ))(e^{(2)}a\otimes 1_L \otimes 1_B)\\
&\overset{\eqref{e-iden-4}}{=} \omega((S_L^{-1}(h) \rt e^{(1)})b )(e^{(2)}a\otimes 1_L \otimes 1_B)\\
&\overset{\eqref{e-QTGaction}}{=} \omega((e^{(1)} \lt h)b )(e^{(2)}a\otimes 1_L \otimes 1_B)\\
&\overset{\eqref{e-iden-5}}{=} \omega(e^{(1)} b )((h \rt e^{(2)})a\otimes 1_L \otimes 1_B)\\
&\overset{\omega \textnormal{ is a trace}}{=} \omega(be^{(1)} )((h \rt e^{(2)})a\otimes 1_L \otimes 1_B)\\
&\overset{\eqref{e-iden-1}}{=} \omega(e^{(1)} )((h \rt (e^{(2)}b))a\otimes 1_L \otimes 1_B)\\
&\overset{\eqref{e-iden-3}}{=} (h \rt b)a\otimes 1_L \otimes 1_B.
\end{align*}

\vspace{-.25in} 

\end{proof}

One avenue of obtaining ``weak quantum symmetries'' is via the following construction. First, let us consider the following monoidal category. 

\begin{definition}[$\LBicomod$] \label{def:Lbicomod}
Let $(\LBicomod, \otimes, \kk)$ be the monoidal category consisting of $L$-bicomodules with the monoidal structure below. Note that if $X \in \LBicomod$, then 
\begin{equation} \label{eq:L-bicomod}
    x_{[0],[-1]} \otimes x_{[0],[0]} \otimes x_{[1]} = x_{[-1]} \otimes x_{[0],[0]} \otimes x_{[0],[1]}.
\end{equation}

For $X, X' \in \LBicomod$, the bicomodule structure on $X\otimes X'$ is determined by 
\begin{align} 
 X \otimes X' \to L \otimes X \otimes X', &\quad x \otimes x' \mapsto x_{[-1]}x'_{[-1]} \otimes x_{[0]} \otimes x'_{[0]}, \label{eq:L-bicomod-left}\\
  X \otimes X' \to X \otimes X' \otimes L, &\quad x \otimes x' \mapsto  x_{[0]} \otimes x'_{[0]} \otimes x_{[1]}x'_{[1]}. \label{eq:L-bicomod-right}
\end{align}
This monoidal category has unit object $\kk$ via the counit of $L$, and has the canonical isomorphisms for associativity and unitality. 
\end{definition}

Now, as an abelian category, 
$\LBicomod$ is the same as the category ${}^L \mathcal{M}^L$ 
defined in Example~\ref{ex:monoidcat}. However, 
they differ as monoidal categories: the tensor product in 
 $\LBicomod$ is $\otimes_\kk$, while the tensor product  in ${}^L\mathcal{M}^L$ is  $\otimes^L$. 

\begin{theorem} \label{thm:QTG}
Consider the assignment $\Gamma: \mathcal{M}^L \to \MH$, where for $X \in  \mathcal{M}^L$, take
\[
\begin{array}{c}
\smallskip
\Gamma(X) = B^{op} \otimes X \otimes B, \quad \text{with}\\
\rho_{\Gamma(X)}(a \otimes x \otimes b) = (a \otimes x_{[0]} \otimes e^{(1)}) \otimes ((x_{[1],1} \triangleright e^{(2)}) \otimes x_{[1],2} \otimes b) \in \Gamma(X) \otimes H,
\end{array}
\]
and for a morphism $f \in \mathcal{M}^L$, take $\Gamma(f) = \id_{B^{op}} \otimes f \otimes \id_B$. Then,
\begin{enumerate}[label=(\roman*), font = \upshape]
    \item $\Gamma$ is a functor;
    \item Let $U': \LBicomod \to \mathcal{M}^L$ denote the forgetful functor which forgets the left $L$-comodule structure. Then the precomposition $\widehat{\Gamma} = \Gamma \circ U': \LBicomod \to \MH$ admits a monoidal structure, where
\[
\begin{array}{c}
\smallskip
\hspace{-.45in} \widehat{\Gamma}_{X,X'}: \widehat{\Gamma}(X)~ \tenbar \; \widehat{\Gamma}(X') \longrightarrow \widehat{\Gamma}(X \otimes X')\\ 
\hspace{1in} (a \otimes x \otimes b) \tenbar (a' \otimes x' \otimes b') \mapsto  (x_{[-1]} \triangleright a')a \otimes x_{[0]} \otimes x_{[0]}' \otimes (b \triangleleft x_{[1]}') b'\\\\
\widehat{\Gamma}_{0}: \unit_{\MH} = H_s \longrightarrow \widehat{\Gamma}(\kk) = \widehat{\Gamma}(\unit_{\LBicomod})\\
 1_B \otimes 1_L \otimes b \mapsto 1_{B} \otimes 1_\kk \otimes b.
\end{array}
\]
\end{enumerate}
\end{theorem}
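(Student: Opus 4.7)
The plan is to verify all claims by direct calculation, leveraging the fact that the formula for $\rho_{\Gamma(X)}$ is a ``twisting'' of the coproduct $\Delta$ of $H$ by the right $L$-coaction on $X$: the expression $(a \otimes x_{[0]} \otimes e^{(1)}) \otimes ((x_{[1],1} \rt e^{(2)}) \otimes x_{[1],2} \otimes b)$ is exactly $\Delta(a \otimes x_{[1]} \otimes b)$ with $x_{[1]}$ first expanded via the coaction on the middle slot (producing $x_{[0]} \otimes x_{[1]}$), and then the remaining $x_{[1]}$ expanded via $\Delta_L^2$. This observation will allow most $H$-side identities to be inherited directly from the corresponding identities for $H$ once a single instance of $\rho_X$ is teased out.

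For part (i), I would first verify that $\Gamma(X)$ is a right $H$-comodule by checking coassociativity and counitality of $\rho_{\Gamma(X)}$. Coassociativity reduces to coassociativity of $\rho_X$ and $\Delta_L$, together with identities \eqref{e-iden-1} and \eqref{e-iden-5} to manipulate the multiple primed copies of the separability idempotent that appear. Counitality, expanded using $\varepsilon(a \otimes h \otimes b) = \omega(a(b \triangleleft S_L^{-1}(h)))$, follows from \eqref{e-iden-3}, \eqref{e-iden-4}, the antipode axiom $x_{[1],1} S_L^{-1}(x_{[1],2}) = \varepsilon_L(x_{[1]}) 1_L$, and the counit axiom for $\rho_X$. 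That $\Gamma(f) = \id \otimes f \otimes \id$ is an $H$-comodule morphism whenever $f$ is an $L$-comodule morphism is immediate, as is functoriality.

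For part (ii), I would then check that each proposed structure map is well-defined and a morphism in $\MH$. For $\widehat{\Gamma}_0$, using Lemma~\ref{lem:qtg-subalg} to identify $H_s = 1_B \otimes 1_L \otimes B$, the comodule-map condition reduces to the fact that $\Delta|_{H_s}$ applied to $1_B \otimes 1_L \otimes b$ matches the $H$-coaction on $\Gamma(\kk)$ applied to $1_B \otimes 1_\kk \otimes b$, both of which collapse via \eqref{e-iden-3}. The main well-definedness check is that $\widehat{\Gamma}_{X,X'}$ is an $H$-comodule morphism on the subspace $\Gamma(X) \tenbar \Gamma(X')$: the $H$-coaction on $\Gamma(X) \tenbar \Gamma(X')$ produces mixed separability idempotents together with products $x_{[1],2}x'_{[1],2}$ and $(x_{[1],1} \rt e^{(2)}) \cdot (x'_{[1],1} \rt e'^{(2)})$, while the $H$-coaction on $\widehat{\Gamma}(X \otimes X')$ uses the right $L$-coaction \eqref{eq:L-bicomod-right} on $X \otimes X'$; the term $x_{[-1]} \rt a'$ in the definition of $\widehat{\Gamma}_{X,X'}$ is precisely the correction needed to align these via the bicomodule compatibility \eqref{eq:L-bicomod}, together with \eqref{e-iden-5} and the fact that $B$ is a right $L$-module algebra.

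The main obstacle is the monoidal associativity axiom $\widehat{\Gamma}_{X\otimes Y, Z}(\widehat{\Gamma}_{X,Y}\tenbar \id) = \widehat{\Gamma}_{X, Y\otimes Z}(\id\tenbar \widehat{\Gamma}_{Y,Z})$ (with both associators equal to the identity). Expanding both sides produces nested $L$-actions of the form $(x_{[-1]} y_{[-1]}) \rt a''$ on one side and $x_{[-1]} \rt (y_{[-1]} \rt a'')$ on the other, which agree because $B^{op}$ is a left $L$-module (via \eqref{e-QTGaction}); dually for the right $L$-action on $B$ via the $x_{[1]}, y_{[1]}$ components. The identity $\rt$ being a module-algebra action lets the various $B^{op}$ factors collect on the left, while the bicomodule compatibility \eqref{eq:L-bicomod} synchronizes the left-$L$-coaction Sweedler components appearing on $Y$ in each nesting. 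The unitality axioms then reduce, using the explicit formulas for $l^{-1}$ and $r^{-1}$ from Section~\ref{sec:unit-MH} and the coaction $\rho_{\widehat{\Gamma}(X)}$, to an identity that cancels via \eqref{e-iden-2}, \eqref{e-iden-3}, and the counit axioms of $X$ as an $L$-bicomodule.
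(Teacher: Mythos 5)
Your proposal is correct and follows essentially the same route as the paper's proof: a direct verification of the comodule axioms for $\rho_{\Gamma(X)}$ and of the comodule-morphism, associativity, and unit constraints for $\widehat{\Gamma}_{X,X'}$ and $\widehat{\Gamma}_0$, using the separability-idempotent identities \eqref{e-iden-1}--\eqref{e-iden-5}, Lemma~\ref{lem:qtg-subalg}, and the bicomodule compatibilities \eqref{eq:L-bicomod}--\eqref{eq:L-bicomod-right}, exactly as in the paper. One small correction: the counitality step uses the inverse-antipode identity $S_L^{-1}(x_{[1],2})\,x_{[1],1}=\varepsilon_L(x_{[1]})1_L$ (the factors occur in this order because $\triangleleft$ is a right action), rather than $x_{[1],1}S_L^{-1}(x_{[1],2})=\varepsilon_L(x_{[1]})1_L$ as you wrote, which is false for a general Hopf algebra.
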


\begin{proof}
(i) Consider the following calculations:
\begin{align*}
    &(\rho_{\Gamma(X)} \otimes \id)\rho_{\Gamma(X)}(a \otimes x \otimes b)\\
    &=(\rho_{\Gamma(X)} \otimes \id)[(a \otimes x_{[0]} \otimes e^{(1)}) \otimes ((x_{[1],1} \triangleright e^{(2)}) \otimes x_{[1],2} \otimes b)]\\
    &=[a \otimes x_{[0],[0]} \otimes e'^{(1)} \otimes (x_{[0],[1],1} \triangleright e'^{(2)}) \otimes x_{[0],[1],2} \otimes e^{(1)}] \otimes ((x_{[1],1} \triangleright e^{(2)}) \otimes x_{[1],2} \otimes b)\\
    &\overset{\eqref{eq:Hcomod-1}}{=}(a \otimes x_{[0]} \otimes e'^{(1)}) \otimes [(x_{[1],1} \triangleright e'^{(2)}) \otimes x_{[1],2,1} \otimes e^{(1)} \otimes ((x_{[1],2,2} \triangleright e^{(2)}) \otimes x_{[1],2,3} \otimes b]\\
    &= (\id \otimes \Delta)[(a \otimes x_{[0]} \otimes e'^{(1)}) \otimes ((x_{[1],1} \triangleright e'^{(2)}) \otimes x_{[1],2} \otimes b)]\\
    &= (\id \otimes \Delta) \rho_{\Gamma(X)}(a \otimes x \otimes b),
\end{align*}
where the third equality holds by the comodule and the coassociativity axioms. Moreover, 
\begin{align*}
    &(\id \otimes \varepsilon)\rho_{\Gamma(X)}(a \otimes x \otimes b)\\
    &= (\id \otimes \varepsilon)[(a \otimes x_{[0]} \otimes e^{(1)}) \otimes ((x_{[1],1} \triangleright e^{(2)}) \otimes x_{[1],2} \otimes b)]\\
    &=(a \otimes x_{[0]} \otimes e^{(1)})\; \omega((x_{[1],1} \triangleright e^{(2)})(b \triangleleft S_L^{-1}(x_{[1],2})))\\
    &\overset{\eqref{e-iden-5}}{=}(a \otimes x_{[0]} \otimes (e^{(1)} \triangleleft x_{[1],1}))\; \omega(e^{(2)}(b \triangleleft S_L^{-1}(x_{[1],2})))\\
    &\overset{\eqref{e-iden-1}}{=}(a \otimes x_{[0]} \otimes (((b \triangleleft S_L^{-1}(x_{[1],2}))e^{(1)}) \triangleleft x_{[1],1}))\; \omega(e^{(2)})\\
    &\overset{\eqref{e-iden-3}}{=}a \otimes x_{[0]} \otimes ((b \triangleleft S_L^{-1}(x_{[1],2})) \triangleleft x_{[1],1})\\
    &= a \otimes \varepsilon_L(x_{[1]})x_{[0]} \otimes b\\
    &= a \otimes x \otimes b.
\end{align*}
So, $\Gamma(X) \in \MH$.
It is clear that $\Gamma(f)$ is a morphism in $\MH$, and with this, $\Gamma$ is a functor.

\smallskip

(ii) First, we have that $\widehat{\Gamma}_{X,X'}$ is a morphism in $\MH$ due to the following calculation:
\begin{align*}
    &(\widehat{\Gamma}_{X,X'} \otimes \id_H)\rho_{\widehat{\Gamma}(X) \tenbar \widehat{\Gamma}(X')}[(a \otimes x \otimes b) \tenbar (a' \otimes x' \otimes b')]\\
    &\overset{\eqref{eq:rhoMN}}{=}  
    (\widehat{\Gamma}_{X,X'} \otimes \id_H)
    \left[(a \otimes x_{[0]} \otimes e^{(1)}) \tenbar (a' \otimes x'_{[0]} \otimes e'^{(1)}) \right.\\
    &  \hspace{1.5in} \otimes \left. \left( (x_{[1],1} \triangleright e^{(2)}) \otimes x_{[1],2} \otimes b \right)
    \left( (x'_{[1],1} \triangleright e'^{(2)}) \otimes x'_{[1],2} \otimes b' \right) \right]\\
    &= \left[(x_{[0],[-1]} \triangleright a')a \otimes x_{[0],[0]} \otimes x'_{[0],[0]} \otimes (e^{(1)} \triangleleft x'_{[0],[1]})e'^{(1)}\right] \\
     &  \hspace{1.5in} \otimes \left( (x_{[1],1} \triangleright e^{(2)}) \otimes x_{[1],2} \otimes b \right)
    \left( (x'_{[1],1} \triangleright e'^{(2)}) \otimes x'_{[1],2} \otimes b' \right)\\
    &\overset{\eqref{eq:L-bicomod}}{=} \left[(x_{[-1]} \triangleright a')a \otimes x_{[0],[0]} \otimes x'_{[0],[0]} \otimes (e^{(1)} \triangleleft x'_{[0],[1]})e'^{(1)}\right] \\
     &  \hspace{1.5in} \otimes \left( (x_{[0],[1],1} \triangleright e^{(2)}) \otimes x_{[0],[1],2} \otimes b \right)
    \left( (x'_{[1],1} \triangleright e'^{(2)}) \otimes x'_{[1],2} \otimes b' \right)\\
    &\overset{\eqref{e-iden-5}}{=} \left[(x_{[-1]} \triangleright a')a \otimes x_{[0],[0]} \otimes x'_{[0],[0]} \otimes e^{(1)}e'^{(1)}\right] \\
     &  \hspace{1.2in} \otimes \left( (x_{[0],[1],1} x'_{[0],[1]} \triangleright e^{(2)}) \otimes x_{[0],[1],2} \otimes b \right)
    \left( (x'_{[1],1} \triangleright e'^{(2)}) \otimes x'_{[1],2} \otimes b' \right)\\
    &\overset{\textnormal{mult in $H$}}{=} \left[(x_{[-1]} \triangleright a')a \otimes x_{[0]} \otimes x'_{[0]} \otimes e^{(1)}e'^{(1)}\right] \\
     &  \hspace{1.2in} \otimes 
     \left( (x_{[2],1} \triangleright (x'_{[2]} \triangleright e'^{(2)})) (x_{[1]} x'_{[1]} \triangleright e^{(2)}) \otimes x_{[2],2} x'_{[3],1} \otimes 
     (b \triangleleft x'_{[3],2})  b' \right)\\
     &= \left[(x_{[-1]} \triangleright a')a \otimes x_{[0]} \otimes x'_{[0]} \otimes e^{(1)}e'^{(1)}\right]  \otimes 
     \left( (x_{[1]} x'_{[1]} \triangleright e^{(2)}e'^{(2)}) \otimes x_{[2]} x'_{[2]} \otimes 
     (b \triangleleft x'_{[3]})  b' \right)\\
     &\overset{\textnormal{$e$ idemp.}}{=} \left[(x_{[-1]} \otimes a')a \otimes x_{[0]} \otimes x'_{[0]} \otimes e^{(1)}\right]  \otimes 
     \left( (x_{[1]} x'_{[1]} \triangleright e^{(2)}) \otimes x_{[2]} x'_{[2]} \otimes 
     (b \triangleleft x'_{[3]})  b' \right)\\
  &= \left[(x_{[-1]} \triangleright a')a \otimes x_{[0],[0]} \otimes x'_{[0],[0]} \otimes e^{(1)}\right]\\
  &  \hspace{1.2in} \otimes 
     \left( (x_{[0],[1],1} x'_{[0],[1],1} \triangleright e^{(2)}) \otimes x_{[0],[1],2} x'_{[0],[1],2} \otimes 
     (b \triangleleft x'_{[1]})  b' \right)\\
  &\overset{\eqref{eq:L-bicomod-left},\eqref{eq:L-bicomod-right}}{=} \left[(x_{[-1]} \triangleright a')a \otimes (x_{[0]} \otimes x'_{[0]})_{[0]} \otimes e^{(1)}\right]\\
  &  \hspace{1.2in} \otimes 
     \left( (x_{[0]} \otimes x'_{[0]})_{[1],1} \triangleright e^{(2)}) \otimes(x_{[0]} \otimes x'_{[0]})_{[1],2} \otimes 
     (b \triangleleft x'_{[1]})  b' \right)\\
  &= \rho_{\widehat{\Gamma}(X \otimes X')}\left[(x_{[-1]} \triangleright a')a \otimes x_{[0]} \otimes x'_{[0]} \otimes (b \triangleleft x'_{[1]})b'\right]\\
  &= \rho_{\widehat{\Gamma}(X \otimes X')} \widehat{\Gamma}_{X,X'}[(a \otimes x \otimes b) \tenbar (a' \otimes x' \otimes b')].
\end{align*}

Next, we have that $\widehat{\Gamma}_0$ is a morphism in $\MH$ due to the calculation below. By Lemma~\ref{lem:qtg-subalg}, an arbitrary element of $H_s$ is of the form $1_B \otimes 1_L \otimes b$. Now,
\begin{align*}
    (\widehat{\Gamma}_{0} \otimes \id_H)\rho_{H_s}[1_B \otimes 1_L \otimes b]
    & \overset{\textnormal{\S\ref{sec:unitobj-MH}}}{=} (\widehat{\Gamma}_{0} \otimes \id_H)\Delta_H[1_B \otimes 1_L \otimes b]\\
    & = (\widehat{\Gamma}_{0} \otimes \id_H)[(1_B \otimes 1_L \otimes e^{(1)}) \otimes ((1_L \triangleright e^{(2)}) \otimes 1_L \otimes b)]\\
    & = (1_B \otimes 1_\kk \otimes e^{(1)}) \otimes ( (1_L \triangleright e^{(2)}) \otimes 1_L \otimes b)\\
    &= \rho_{\widehat{\Gamma}(\kk)} \widehat{\Gamma}_{0}[1_B \otimes 1_L \otimes b].
\end{align*}

\smallskip

Moreover, we have that $\widehat{\Gamma}_{X,X'}$ satisfies the associativity constraint required for $\widehat{\Gamma}$ to be monoidal (see \cref{def:monfunctor}):
{\small 
\begin{align*}
   &\widehat{\Gamma}_{X,X' \otimes X''}(\id \otimes \widehat{\Gamma}_{X',X''})[(a \otimes x \otimes b) \tenbar (a' \otimes x' \otimes b') \tenbar (a'' \otimes x'' \otimes b'')]\\
    &= \widehat{\Gamma}_{X,X' \otimes X''}[(a \otimes x \otimes b) \tenbar ((x'_{[-1]} \triangleright a'')a' \otimes x'_{[0]} \otimes x''_{[0]} \otimes (b' \triangleleft x''_{[1]})b'')]\\
    &\overset{\eqref{eq:L-bicomod-right}}{=}(x_{[-1],2}x'_{[-1]} \triangleright a'')( x_{[-1],1} \triangleright a')a \otimes x_{[0]}  \otimes x'_{[0],[0]} \otimes x''_{[0],[0]} \otimes (b \triangleleft x'_{[0],[1]} x''_{[0],[1]})(b' \triangleleft x''_{[1]})b''\\
    &= (x_{[0],[-1]}x'_{[0],[-1]} \triangleright a'')( x_{[-1]} \triangleright a')a \otimes x_{[0],[0]} \otimes x'_{[0],[0]} \otimes x''_{[0]} \otimes (b \triangleleft x'_{[1]} x''_{[1],1})(b' \triangleleft x''_{[1],2})b''\\
    &\overset{\eqref{eq:L-bicomod-left}}{=}((x_{[0]} \otimes x'_{[0]})_{[-1]} \triangleright a'')( x_{[-1]} \triangleright a')a \otimes (x_{[0]} \otimes x'_{[0]})_{[0]} \otimes x''_{[0]} \otimes (((b \triangleleft x'_{[1]})b') \triangleleft x''_{[1]})b''\\
    &= \widehat{\Gamma}_{X \otimes X', X''}[(x_{[-1]} \triangleright a')a \otimes x_{[0]} \otimes x'_{[0]} \otimes ((b \triangleleft x'_{[1]})b') \tenbar (a'' \otimes x'' \otimes b'')]\\
    &= \widehat{\Gamma}_{X \otimes X', X''}(\widehat{\Gamma}_{X,X'} \tenbar \id)[(a \otimes x \otimes b) \tenbar (a' \otimes x' \otimes b') \tenbar (a'' \otimes x'' \otimes b'')].
\end{align*}
}
\smallskip
Finally,  $\widehat{\Gamma}_{0}$ satisfies the left unit constraint required for $\widehat{\Gamma}$ to be monoidal (see \cref{def:monfunctor}):
\begin{align*}
   & \widehat{\Gamma}(l_X)\;\widehat{\Gamma}_{\kk,X}\;(\widehat{\Gamma}_0 \tenbar \id)[(1_B \otimes 1_L \otimes b) \tenbar (a' \otimes x' \otimes b)]\\  
   &= \widehat{\Gamma}(l_X)\; \widehat{\Gamma}_{\kk,X}[(1_B \otimes 1_\kk \otimes b) \tenbar (a' \otimes x' \otimes b)]\\
&= a'  \otimes x'_{[0]} \otimes (b \triangleleft x'_{[1]})b'\\
&\overset{\eqref{e-iden-3}}{=} \omega(e^{(2)})(a' \otimes x'_{[0]} \otimes (b \triangleleft x'_{[1]})b'e^{(1)})\\
&\overset{\eqref{e-iden-1}}{=} \omega(e^{(2)}(b \triangleleft x'_{[1]})b')(a' \otimes x'_{[0]} \otimes e^{(1)})\\
&= \omega(\varepsilon_L(x'_{[1]})e^{(2)}((b \triangleleft x'_{[2]})b'))(a' \otimes x'_{[0]} \otimes e^{(1)})\\
&= \omega\left((S_L^{-1}(x'_{[2]})x'_{[1]} \triangleright e^{(2)})((b \triangleleft x'_{[3]})b')\right)(a' \otimes x'_{[0]} \otimes e^{(1)})\\
&\overset{\eqref{e-iden-4}}{=} \omega\left((x'_{[1]} \triangleright e^{(2)})\left(((b \triangleleft x'_{[3]})b')\triangleleft S_L^{-1}(x'_{[2]})\right)\right)(a' \otimes x'_{[0]} \otimes e^{(1)})\\
&= \varepsilon\left((x'_{[1]} \triangleright e^{(2)}) \otimes x'_{[2]} \otimes (b \triangleleft x'_{[3]})b'\right)(a' \otimes x'_{[0]} \otimes e^{(1)})\\
&\overset{\text{mult. in $H$}}{=} \varepsilon\left((1_B \otimes 1_L \otimes b)((x'_{[1],1} \triangleright e^{(2)}) \otimes x'_{[1],2} \otimes b')\right)(a' \otimes x'_{[0]} \otimes e^{(1)})\\
&= \varepsilon\left((1_B \otimes 1_L \otimes b)(a' \otimes x' \otimes b')_{[1]}\right)(a' \otimes x' \otimes b')_{[0]}\\
&\overset{\textnormal{\S\ref{sec:unit-MH}}}= l_{\widehat{\Gamma}(X)}[(1_B \otimes 1_L \otimes b) \tenbar (a' \otimes x' \otimes b')],
\end{align*}
and the right unit constraint holds similarly.
\end{proof}

The monoidal structure on $\widehat{\Gamma}$ is prompted by the multiplication of $H:=H(L,B,\triangleleft)$; see Example~\ref{ex:X=L} below. Indeed, $H$ (with coaction defined by $\Delta_H$) is an algebra in the monoidal category $\mathcal{M}^H$. 
However, $H$ is not necessarily a coalgebra in $\mathcal{M}^H$. So, we ask:

\begin{question}
Under what conditions does the functor $\widehat{\Gamma}$ of Theorem~\ref{thm:QTG} admit a comonoidal structure? Further, when does it admit a Frobenius monoidal structure?
\end{question}

\begin{example}[$X=L$] \label{ex:X=L}
Take any Hopf algebra $L$ with strongly separable module algebra $B$ as in Notation~\ref{not:QTG}, and let $H = H(L,B,\triangleleft)$ be the corresponding QTG. 

By Theorem~\ref{thm:QTG}, $\widehat{\Gamma}: \LBicomod \to \mathcal{M}^H$ is monoidal and so by Proposition~\ref{prp:monfunctor}, we obtain an induced functor $\mathsf{Alg}(\LBicomod) \to \mathsf{Alg}(\mathcal{M}^H)$. By Theorem~\ref{thm:alg}, there is an isomorphism of categories $G: \mathsf{Alg}(\mathcal{M}^H) \to \mathcal{A}^H$. Explicitly, we have (using Claim~\ref{claim:Alg.G}) that
\begin{align} \label{eq:compfunc}
\mathsf{Alg}(\LBicomod) \overset{\text{ind. by $\widehat{\Gamma}$}}{\longrightarrow} &\mathsf{Alg}(\mathcal{M}^H) \overset{G}{\longrightarrow} \mathcal{A}^H
\end{align}
{\small$$ \left(L, m_L, u_L\right) \mapsto  \left(\widehat{\Gamma}(L),\; \widehat{\Gamma}(m_L) \widehat{\Gamma}_{L,L}, \;\widehat{\Gamma}(u_L) \widehat{\Gamma}_0\right) \mapsto  \left(\widehat{\Gamma}(L), \;\widehat{\Gamma}(m_L) \widehat{\Gamma}_{L,L} U_{\widehat{\Gamma}(L), \widehat{\Gamma}(L)}, \;\widehat{\Gamma}(u_L) \widehat{\Gamma}_0 U_0\right).$$}

\vspace{-.05in}

\noindent Note that $L \in \mathsf{Alg}(\LBicomod)$ via $\Delta_L$. Then $\widehat{\Gamma}(L) = B^{op} \otimes L \otimes B = H$ as a $\kk$-vector space. By Theorem~\ref{thm:QTG}, the right $H$-coaction on $\widehat{\Gamma}(L)$ coincides with $\Delta_H$.

We claim that 
\[\left(\widehat{\Gamma}(L), \; \widehat{\Gamma}(m_L) \widehat{\Gamma}_{L,L} U_{\widehat{\Gamma}(L), \widehat{\Gamma}(L)}, \; \widehat{\Gamma}(u_L) \widehat{\Gamma}_0 U_0\right) =(H, m_H, u_H),
\]
that is, under the functor \eqref{eq:compfunc}, $L$ maps to $H$. We verify this as follows.
To verify that $\widehat{\Gamma}(u_L)\widehat{\Gamma}_0U_0=u_H,$ notice that 
\begin{equation*}
\widehat{\Gamma}(u_L)\widehat{\Gamma}_0U_0(1_{\kk}) \overset{\eqref{eq:Ulower}}{=}  \widehat{\Gamma}(u_L)\widehat{\Gamma}_0(1_H) 
= \widehat{\Gamma}(u_L)(1_B \otimes 1_{\kk} \otimes 1_B) 
= 1_B \otimes 1_L \otimes 1_B 
= 1_H.
\end{equation*}
To show that $\widehat{\Gamma}(m_L) \widehat{\Gamma}_{L,L} U_{\widehat{\Gamma}(L),\widehat{\Gamma}(L)} = m_H$, we first note that for any $X \in \LBicomod$, a straightforward computation shows,
\[U_{\widehat{\Gamma}(X),\widehat{\Gamma}(X)}((a \otimes x \otimes b)\otimes(a'\otimes x' \otimes b'))=(a \otimes x \otimes be^{(1)})\tenbar (a' \otimes x_{[0]}' \otimes (e^{(2)} \triangleleft x_{[1]}')b').\]
Therefore,
\begin{align*}
&\widehat{\Gamma}(m_L) \widehat{\Gamma}_{L,L} U_{\widehat{\Gamma}(L),\widehat{\Gamma}(L)}((a \otimes x \otimes b)\otimes(a'\otimes x' \otimes b'))\\
& =\widehat{\Gamma}(m_L)((x_{[-1]} \rt a')a \otimes x_{[0]} \otimes x'_{[0],[0]} \otimes ((be^{(1)})\lt x'_{[0],[1]})(e^{(2)} \lt x'_{[1]})b')\\
& =\widehat{\Gamma}(m_L)\l((x_{[-1]} \rt a')a \otimes x_{[0]} \otimes x'_{[0]} \otimes ((be^{(1)})\lt x'_{[1],1})(e^{(2)} \lt x'_{[1],2})b'\r)\\
& =\widehat{\Gamma}(m_L)\l((x_{[-1]} \rt a')a \otimes x_{[0]} \otimes x'_{[0]} \otimes ((be^{(1)}e^{(2)}) \lt x'_{[1]})b'\r)\\
& \overset{\eqref{e-iden-2}}{=} \widehat{\Gamma}(m_L)\l((x_{[-1]} \rt a')a \otimes x_{[0]} \otimes x'_{[0]} \otimes (b \lt x'_{[1]})b'\r)\\
& =\l(\id \otimes m_L \otimes \id\r)\l((x_{[-1]} \rt a')a \otimes x_{[0]} \otimes x'_{[0]} \otimes (b \lt x'_{[1]})b'\r)\\
& =(x_{[-1]} \rt a')a \otimes x_{[0]} x'_{[0]} \otimes (b \lt x'_{[1]})b'\\
& =(x_{1} \rt a')a \otimes x_{2} x'_{1} \otimes (b \lt x'_{2})b' \quad \quad  \text{ (since the bicoaction is given by }\Delta_L)\\
& = m_H((a \otimes x \otimes b)\otimes(a'\otimes x' \otimes b')).
\end{align*}

\smallskip

\end{example}

\begin{example}[$X=\unit_{\LBicomod} = \kk$] Again, take any $L$ and $B$ as in Notation~\ref{not:QTG}, and let $H(L,B,\triangleleft)$ be the corresponding QTG. Then, $\kk \in {\sf Alg}(\LBicomod)$ via $u_L$. By Theorem~\ref{thm:QTG} we get that $\widehat{\Gamma}(\kk) \in {\sf Alg}(\mathcal{M}^{H(L,B,\triangleleft)}$) with $H(L,B,\triangleleft)$-coaction given by 
$$\widehat{\Gamma}(\kk) \to \widehat{\Gamma}(\kk) \otimes H(L,B,\triangleleft), \quad (a \otimes 1_\kk \otimes b) \mapsto (a \otimes 1_\kk \otimes e^{(1)}) \otimes (e^{(2)} \otimes 1_\kk \otimes b)$$ and
 $\widehat{\Gamma}(\kk) \cong B^{op} \otimes B$ as $\kk$-algebras.
 Indeed, the identification is given by $a \otimes 1_\kk \otimes b$ corresponding to $a \otimes b$. Here,
 \begin{align*}
    &G(m_{\widehat{\Gamma}(\kk)})[(a \otimes 1_\kk \otimes b) \otimes (a' \otimes 1_\kk \otimes b')]\\
    &=m_{\widehat{\Gamma}(\kk)}[(a \otimes 1_\kk \otimes b) \tenbar (a' \otimes 1_\kk \otimes b')]\\
    &\overset{\textnormal{\ref{prp:monfunctor}(i)}}{=}
    \widehat{\Gamma}(m_\kk)\; \widehat{\Gamma}_{\kk,\kk}[(a \otimes 1_\kk \otimes b) \tenbar (a' \otimes 1_\kk \otimes b')]\\
    &=  \widehat{\Gamma}(m_\kk)[a'a \otimes 1_\kk \otimes 1_\kk \otimes b  b']\\
    &=  a'a \otimes 1_\kk \otimes b  b',
\end{align*}

\vspace{-.2in}

\begin{align*}
   G(u_{\widehat{\Gamma}(\kk)})(1_\kk)
    &=u_{\widehat{\Gamma}(\kk)}(1_B \otimes 1_\kk \otimes 1_B)
    \overset{\textnormal{\ref{prp:monfunctor}(i)}}{=}
    \widehat{\Gamma}(u_\kk)\; \widehat{\Gamma}_{0}(1_B \otimes 1_\kk \otimes 1_B) &= 1_{B} \otimes 1_\kk \otimes 1_B.
\end{align*}
\end{example}

\begin{remark}
Note from the example above that $\widehat{\Gamma}(\unit_{\LBicomod}) \cong B^{op} \otimes B$, and by Section~\ref{sec:unitobj-MH} and  Lemma~\ref{lem:qtg-subalg} that $\unit_{\mathcal{M}^{H(L,B, \triangleleft)}} \cong B$. So, if $\widehat{\Gamma}$ is strong monoidal, then $B = \kk$.  
\end{remark}

We end this section with a non-trivial example of a QTG, using $\widehat{\Gamma}$ as in Theorem~\ref{thm:QTG}.

\begin{example} Let $G$ be a finite group, and assume that $\kk$ is a field such that $|G| \in \kk^\times$. Take $L = \kk G$ as a Hopf algebra, and also take $B = \kk G$ as a strongly separable right $L$-module algebra via the adjoint action  $b \triangleleft h = h^{-1}bh$ for $h,b \in G$. Here, a (symmetric) separability idempotent of $B$ is given by $|G|^{-1} \sum_{g \in G} g \otimes g^{-1}$, and $\omega = \varepsilon_{\kk G}$;  in particular, the identities~\eqref{e-iden-4} and~\eqref{e-iden-5} hold.
In this case, the QTG $$H:=H(\kk G, \kk G, \triangleleft_{\text{adj}})$$ is a weak Hopf algebra of dimension $|G|^3$ with structure determined by
\begin{align*}
    m_H((a \otimes h \otimes b) \otimes (a' \otimes h' \otimes b')) &= ha'h^{-1}a \otimes h h' \otimes h'^{-1}bh'b',\\
    u_H(1_\kk) &= 1_{\kk G} \otimes 1_{\kk G} \otimes 1_{\kk G},\\
    \Delta_H(a \otimes h \otimes b) &= |G|^{-1} \textstyle \sum_{g \in G} (a \otimes h \otimes g) \otimes (hg^{-1}h^{-1} \otimes h \otimes b),\\
    \varepsilon(a \otimes h \otimes b) &= 1,\\
    S(a \otimes h \otimes b) &= b \otimes h^{-1} \otimes a,
\end{align*}
for $a,a',h,h',b,b' \in G$. Take $1_H:= u_H(1_\kk)$. Note that when $G$ is non-trivial, we get that $H$ is not a Hopf algebra as $\Delta_H(1_H) \not= 1_H \otimes 1_H$ in this case.  

Now given a right $G$-comodule $V$, we may extend linearly so that $V$ is a right $\kk G$-comodule. Using the antipode, we obtain a left $\kk G$-comodule structure on $V$, and since $\kk G$ is cocommutative, this makes $V$ a $\kk G$-bicomodule.
Further, we can extend the $\kk G$-coactions to $V^{\otimes k}$ for all $k \geq 1$ to obtain a $\kk G$-bicomodule structure on the tensor algebra $T(V)$ so that $T(V) \in {\sf Alg}(\kk G\mhyphen \mathsf{Bicomod})$. 
By  Theorem~\ref{thm:QTG}, $(\kk G)^{op} \otimes T(V) \otimes \kk G \in {\sf Alg}(\mathcal{M}^H)$, and so by Theorem~\ref{thm:alg}, it is also a right $H$-comodule algebra (i.e., in the category $\mathcal{A}^H$). Similarly, we can take any ideal $I$ of $T(V)$ that is stable under the $\kk G$-coactions on $T(V)$ to get that $A:=T(V)/I$ belongs to ${\sf Alg}(\kk G\mhyphen\mathsf{Bicomod})$ and $(\kk G)^{op} \otimes A \otimes \kk G \in {\sf Alg}(\mathcal{M}^H) \;(\cong \mathcal{A}^H)$.
\end{example}

\bibliographystyle{alpha}
\bibliography{biblio}{}

\end{document}